\newtheorem{proposition}{Proposition}[section]
\newtheorem{theorem}{Theorem}[section]
\newtheorem{corollary}{Corollary}[section]
\theoremstyle{definition}
\newtheorem{definition}{Definition}[section]
\newtheorem{remark}{Remark}[section]
\newtheorem{experiment}{Experiment}[section]
\newtheorem{example}{Example}[section]
\newtheorem{problem}{Problem}[section]
\numberwithin{equation}{section}
\begin{document}

\title{Recent progress in determining \(p\)-class field towers}

\author{Daniel C. Mayer}
\address{Naglergasse 53, 8010 Graz, Austria}
\email{algebraic.number.theory@algebra.at}
\urladdr{http://www.algebra.at}
\thanks{Research supported by the Austrian Science Fund (FWF): P 26008-N25}

\subjclass[2000]{Primary 11R37, 11R29, 11R11, 11R20, 11Y40; Secondary 20D15, 20F05, 20F12, 20F14, 20--04}
\keywords{\(p\)-class field towers, \(p\)-class groups, \(p\)-capitulation, quadratic fields, dihedral fields of degree \(2p\);
finite \(p\)-groups with two generators, descendant trees, \(p\)-group generation algorithm,
nuclear rank, bifurcation, \(p\)-multiplicator rank, relation rank, generator rank, Shafarevich cover,
Artin transfers, partial order of Artin patterns}

\date{May 31, 2016}

\begin{abstract}
For a fixed prime \(p\),
the \(p\)-class tower \(\mathrm{F}_p^\infty{K}\) of a number field \(K\)
is considered to be known
if a pro-\(p\) presentation of the Galois group \(G=\mathrm{Gal}(\mathrm{F}_p^\infty{K}\vert K)\) is given.
In the last few years,
it turned out that the Artin pattern \(\mathrm{AP}(K)=(\tau(K),\varkappa(K))\)
consisting of targets \(\tau(K)=(\mathrm{Cl}_p{L})\) and kernels \(\varkappa(K)=(\ker{J_{L\vert K}})\)
of class extensions \(J_{L\vert K}:\mathrm{Cl}_p{K}\to\mathrm{Cl}_p{L}\) to
unramified abelian subfields \(L\vert K\) of the Hilbert \(p\)-class field \(\mathrm{F}_p^1{K}\)
only suffices for determining the two-stage approximation \(\mathfrak{G}=G/G^{\prime\prime}\) of \(G\).
Additional techniques had to be developed for identifying the group \(G\) itself:
searching strategies in descendant trees of finite \(p\)-groups,
iterated and multilayered IPADs of second order,
and the cohomological concept of Shafarevich covers involving relation ranks.
This enabled the discovery of three-stage towers of \(p\)-class fields
over quadratic base fields \(K=\mathbb{Q}(\sqrt{d})\)
for \(p\in\lbrace 2,3,5\rbrace\).
These non-metabelian towers reveal the new phenomenon of various tree topologies expressing the
mutual location of the groups \(G\) and \(\mathfrak{G}\).
\end{abstract}

\maketitle



\section{Introduction}
\label{s:Intro}

The reasons why our recent progress in determining \(p\)-class field towers
\cite{BuMa,Ma10,Ma12}
became possible during the past four years is due,
firstly, to a few crucial theoretical results by Artin
\cite{Ar1,Ar2}
and Shafarevich
\cite{Sh},
secondly, to actual implementations of group theoretic, resp. class field theoretic, algorithms by Newman
\cite{Nm1}
and O'Brien
\cite{OB},
resp. Fieker
\cite{Fi},
and finally, to several striking phenomena discovered by ourselves
\cite{Ma1,Ma3,Ma6,Ma7,Ma9,Ma11},
partially inspired by Bartholdi
\cite{BaBu},
Boston, Leedham-Green, Hajir
\cite{BoLG,BBH},
Bush
\cite{Bu},
and Nover
\cite{BoNo,No}.
In chronological order, these indispensable foundations can be summarized as follows.



\subsection{Class extension and transfer}
\label{ss:ClassExtAndTransfer}


Let \(p\) be a prime number and suppose that \(K\) is a number field
with non-trivial \(p\)-class group \(\mathrm{Cl}_p{K}:=\mathrm{Syl}_p\mathrm{Cl}_K>1\).
Then \(K\) possesses unramified abelian extensions \(L\vert K\) of relative degree a power of \(p\),
the biggest of them being the Hilbert \(p\)-class field \(\mathrm{F}_p^1{K}\) of \(K\).
For each of the extensions \(L\vert K\),
let \(J_{L\vert K}:\,\mathrm{Cl}_p{K}\to\mathrm{Cl}_p{L}\) be the \textit{class extension} homomorphism.

Artin used his reciprocity law of class field theory
\cite{Ar1}
for translating the arithmetical properties of \(J_{L\vert K}\),
the \(p\)-capitulation kernel \(\ker{J_{L\vert K}}\) and the target \(p\)-class group \(\mathrm{Cl}_p{L}\),
into group theoretic properties of the \textit{transfer} homomorphism
\(T_{\mathfrak{G},H}:\,\mathfrak{G}\to H/H^\prime\)
from the Galois group \(\mathfrak{G}:=\mathrm{Gal}(\mathrm{F}_p^2{K}\vert K)\)
of the second Hilbert \(p\)-class field \(\mathrm{F}_p^2{K}=\mathrm{F}_p^1{\mathrm{F}_p^1{K}}\)
to the abelianization of the subgroup \(H:=\mathrm{Gal}(\mathrm{F}_p^2{K}\vert L)\).
The reciprocity map establishes an isomorphism between the targets,
\(H/H^\prime\simeq\mathrm{Cl}_p{L}\),
and an isomorphism between the domains,
\(\mathfrak{G}/\mathfrak{G}^\prime\simeq\mathrm{Cl}_p{K}\),
in particular, between the kernels \(\ker{\tilde{T}_{\mathfrak{G},H}}\simeq\ker{J_{L\vert K}}\),
of the \textit{induced} transfer \(\tilde{T}_{\mathfrak{G},H}:\,\mathfrak{G}/\mathfrak{G}^\prime\to H/H^\prime\) and \(J_{L\vert K}\)
\cite{Ar2}.
In \S\
\ref{ss:ArtinPattern},
we introduce the Artin pattern \(\mathrm{AP}\) of \(K\), resp. \(\mathfrak{G}\),
as the collection of all targets and kernels of the homomorphisms \(J_{L\vert K}\), resp. \(\tilde{T}_{\mathfrak{G},H}\),
where \(L\) varies over intermediate fields \(K\le L\le \mathrm{F}_p^1{K}\),
resp. \(H\) varies over intermediate groups \(\mathfrak{G}^\prime\le H\le\mathfrak{G}\).
The Artin pattern has turned out to be sufficient for identifying
a finite batch of candidates for
the \textit{second \(p\)-class group} \(\mathfrak{G}\) of \(K\),
frequently even a unique candidate.



\subsection{Relation rank of the \(p\)-tower group}
\label{ss:RelationRank} 

An invaluably precious aid in identifying the \(p\)-\textit{tower group},
that is the Galois group \(G:=\mathrm{Gal}(\mathrm{F}_p^\infty{K}\vert K)\)
of the maximal unramified pro-\(p\) extension \(\mathrm{F}_p^\infty{K}\) of a number field \(K\),
has been elaborated by Shafarevich
\cite{Sh},
who determined bounds \(\varrho\le d_2{G}\le\varrho+r+\theta\)
for the \textit{relation rank} \(d_2{G}:=\dim_{\mathbb{F}_p}\mathrm{H}^2\left(G,\mathbb{F}_p\right)\) of \(G\)
in terms of the \(p\)-class rank \(\varrho\) of \(K\),
the torsion free Dirichlet unit rank \(r=r_1+r_2-1\) of a number field \(K\) with signature \((r_1,r_2)\),
and the invariant \(\theta\) which takes the value \(1\), if \(K\) contains a primitive \(p\)th root of unity,
and \(0\), otherwise.

The derived length of the group \(G\) is called
the \textit{length} \(\ell_p{K}=\mathrm{dl}(G)\) of the \(p\)-class tower of \(K\).
The metabelianization \(G/G^{\prime\prime}\) of the \(p\)-tower group \(G\)
is isomorphic to the second \(p\)-class group \(\mathfrak{G}\) of \(K\) in \S\
\ref{ss:ClassExtAndTransfer},
which can be viewed as a two-stage approximation of \(G\).



\subsection{Cover and Shafarevich cover}
\label{ss:Cover}

Let \(p\) be a prime and
\(\mathfrak{G}\) be a finite \textit{metabelian} \(p\)-group.

\begin{definition}
\label{dfn:Cover}

By the \textit{cover} of \(\mathfrak{G}\) we understand
the set of all (isomorphism classes of) finite \(p\)-groups
whose second derived quotient is isomorphic to \(\mathfrak{G}\),
\[\mathrm{cov}(\mathfrak{G}):=\left\lbrace G\mid \mathrm{ord}(G)<\infty,\ G/G^{\prime\prime}\simeq\mathfrak{G}\right\rbrace.\]
By eliminating the finiteness condition, we obtain the \textit{complete cover} of \(\mathfrak{G}\),
\[\mathrm{cov}_c(\mathfrak{G}):=\left\lbrace G\mid G/G^{\prime\prime}\simeq\mathfrak{G}\right\rbrace.\]

\end{definition}


\begin{remark}
\label{rmk:Cover}
The unique metabelian element of \(\mathrm{cov}(\mathfrak{G})\) is
the isomorphism class of \(\mathfrak{G}\) itself.
\end{remark}



\begin{theorem}
\label{thm:RelationRank}

(Shafarevich \(1964\))

Let \(p\) be a prime number
and denote by \(\zeta\) a primitive \(p\)th root of unity.
Let \(K\) be a number field with signature \((r_1,r_2)\) and
torsionfree Dirichlet unit rank \(r=r_1+r_2-1\),
and let \(S\) be a finite set of non-archimedean or real archimedean places of \(K\).
Assume that no place in \(S\) divides \(p\).

Then the relation rank \(d_2{G_S}:=\dim_{\mathbb{F}_p}H^2(G_S,\mathbb{F}_p)\) of
the Galois group \(G_S:=\mathrm{Gal}(K_S\vert K)\) of
the maximal pro-\(p\) extension \(K_S\) of \(K\)
which is unramified outside of \(S\)
is bounded from above by

\begin{equation}
\label{eqn:RelationRank}
d_2{G_S}\le
\begin{cases}
d_1{G_S}+r   & \text{ if } S\ne\emptyset \text { or } \zeta\notin K, \\
d_1{G_S}+r+1 & \text{ if } S=\emptyset  \text { and } \zeta\in K,
\end{cases}
\end{equation}

\noindent
where \(d_1{G_S}:=\dim_{\mathbb{F}_p}H^1(G_S,\mathbb{F}_p)\) denotes the generator rank of \(G_S\).

\end{theorem}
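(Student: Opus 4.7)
The plan is to interpret \(d_1(G_S)\) and \(d_2(G_S)\) as the generator and relation ranks of the pro-\(p\) group \(G_S\) via the Burnside basis theorem, and to bound them by means of global Galois cohomology with coefficients in \(\mathbb{F}_p\) and in its Kummer dual \(\mu_p\). Both ranks are then dimensions over \(\mathbb{F}_p\) of the stated cohomology groups, so the theorem becomes a cohomological inequality.

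First I would set up Kummer theory to identify \(H^1(G_S,\mathbb{F}_p)\) with the group parametrising cyclic degree-\(p\) extensions of \(K\) unramified outside \(S\). By standard class-field-theoretic arguments this group is expressible in terms of \(S\)-units modulo \(p\)-th powers and the \(p\)-rank of the \(S\)-class group; the same module, viewed dually, controls the Kummer companion \(H^1(G_S,\mu_p)\). Although the resulting explicit formula for \(d_1(G_S)\) is not itself needed for the inequality, this step pins down the Kummer-dual object that drives the estimate of \(H^2\).

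Second I would invoke the nine-term Poitou--Tate exact sequence for the module \(M=\mathbb{F}_p\) and its dual \(M^{\ast}=\mu_p\). This sequence expresses \(H^2(G_S,\mathbb{F}_p)\) as an extension of the Pontryagin dual of a global-to-local kernel in \(H^1(G_S,\mu_p)\) by the image of the local contributions \(\bigoplus_{v\in S}H^2(G_v,\mathbb{F}_p)\). Here the hypothesis that no place of \(S\) divides \(p\) is decisive: at a tame finite \(v\in S\) the local cohomology is small and explicitly computable, while a \(p\)-adic place in \(S\) would contribute an uncontrolled \([K_v:\mathbb{Q}_p]\)-dimensional term via local Tate duality.

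Finally I would combine the previous step with the global Euler--Poincar\'e characteristic identity, which under our hypothesis on \(S\) reduces to a contribution from the real archimedean places and isolates the unit rank \(r=r_1+r_2-1\) via Dirichlet's unit theorem entering through \(H^1(G_S,\mu_p)\). Using that \(H^0(G_S,\mu_p)\) is one-dimensional exactly when \(\zeta\in K\), and that the corresponding extra summand survives the bound only when \(S=\emptyset\), one recovers the dichotomy of \eqref{eqn:RelationRank}. The hard part will be the careful bookkeeping of vanishing versus non-vanishing of \(H^0\) and \(H^2\) at the finitely many archimedean and tame finite places of \(S\); it is precisely which of these terms survive the exact sequence that produces the \(+1\) correction in the exceptional case.
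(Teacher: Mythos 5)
The paper offers no argument for this theorem beyond citing Shafarevich's original article and the corrected restatement in the author's own work, so your proposal has to be measured against the standard cohomological proof in the literature (Koch; Neukirch--Schmidt--Wingberg), whose overall architecture you have correctly identified: read \(d_1\) and \(d_2\) as generator and relation ranks of the pro-\(p\) group, compute \(H^1\) by Kummer theory and class field theory, and control \(H^2\) by global duality together with Dirichlet's unit theorem. That is the right family of tools.

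There is, however, a genuine gap at the centre of your plan. The nine-term Poitou--Tate sequence and the global Euler--Poincar\'e characteristic formula for the module \(\mathbb{F}_p\) are valid for the Galois group of the maximal extension unramified outside \(S\) only when \(S\) contains all places above \(p\) (and the archimedean places); the hypothesis of the theorem is precisely that \(S\) contains no place above \(p\), so neither tool applies to \(G_S\) in the form you invoke in your second and third steps. You interpret the condition \(S\cap S_p=\emptyset\) as merely keeping the local terms at \(v\in S\) small, but in fact it removes the very exact sequence you want to quote. The standard repair is to enlarge to \(S'=S\cup S_p\cup S_\infty\), where Poitou--Tate and the Euler characteristic formula do hold, and then descend: one cuts \(H^1(G_S,\mathbb{F}_p)\) out of \(H^1\) over \(S'\) by local conditions at \(S'\setminus S\), and bounds \(H^2(G_S,\mathbb{F}_p)\) by the dual of the Kummer group \(V_S/K^{\times p}\) of elements that are local \(p\)-th powers at \(S\) and generate \(p\)-th powers of ideals. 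It is the \(S\)-unit theorem applied to \(V_\emptyset\) that produces \(r=r_1+r_2-1\) plus the summand \(\theta\) for \(\mu_p\subset K\), and it is the comparison of the global \(\delta\) with the local \(\delta_v\) for \(v\in S\) that decides whether the \(+1\) survives --- exactly the dichotomy between \(S=\emptyset\) and \(S\neq\emptyset\) in the displayed formula. Your closing paragraph defers this to \emph{bookkeeping}, but it is the crux: this is precisely where Shafarevich's original statement contained the misprint that the present theorem corrects, so a proof that does not carry out the comparison explicitly has not established the case distinction. A smaller omission: the duality theorems live on the full (not pro-\(p\)) Galois group unramified outside \(S\), and one must first check that \(H^1\) agrees and that \(H^2\) of the maximal pro-\(p\) quotient injects into \(H^2\) of the full group before any of the bounds transfer to the group \(G_S\) whose relation rank is being estimated.
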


\begin{proof}
The original statement in
\cite[Thm. 6, \((18^\prime)\)]{Sh}
contained a serious misprint
which was corrected in
\cite[Thm. 5.5, p. 28]{Ma10}.
\end{proof}



\begin{definition}
\label{dfn:ShafarevichCover}
Let \(p\) be a prime and
\(K\) be a number field with
\(p\)-class rank \(\varrho:=d_1(\mathrm{Cl}_p{K})\),
torsionfree Dirichlet unit rank \(r\),
and second \(p\)-class group \(\mathfrak{G}:=\mathrm{G}_p^2 K\).
By the \textit{Shafarevich cover},
\(\mathrm{cov}(\mathfrak{G},K)\),
of \(\mathfrak{G}\) \textit{with respect to} \(K\)
we understand the subset of \(\mathrm{cov}(\mathfrak{G})\) whose elements \(G\)
satisfy the following condition for their relation rank \(d_2{G}\):

\begin{equation}
\label{eqn:ShafarevichCover}
\varrho\le d_2{G}\le\varrho+r+\theta,
\quad \text{ where } \quad
\theta:=
\begin{cases}
1 & \text{ if } K \text{ contains the } p\text{th roots of unity,} \\
0 & \text{ otherwise.} 
\end{cases}
\end{equation}

\end{definition}


\begin{definition}
\label{dfn:SigmaGroup}

A finite \(p\)-group or an infinite pro-\(p\) group \(G\), with a prime number \(p\ge 2\),
is called a \(\sigma\)-\textit{group},
if it possesses a \textit{generator inverting} (GI-)automorphism \(\sigma\in\mathrm{Aut}(G)\)
which acts as the inversion mapping on the derived quotient \(G/G^\prime\), that is,

\begin{equation}
\label{eqn:SigmaGroup}
\sigma(g)G^\prime=g^{-1}G^\prime \quad \text{ for all } g\in G.
\end{equation}

\noindent
\(G\) is called a \textit{Schur \(\sigma\)-group} if it is a \(\sigma\)-group
with balanced presentation \(d_2{G}=d_1{G}\).

\end{definition}



\subsection{\(p\)-Group generation algorithm}
\label{ss:pGroupAlgorithm}

The descendant tree \(\mathcal{T}(R)\) of a finite \(p\)-group \(R\)
\cite{Ma6}
can be constructed recursively
by starting at the root \(R\)
and successively determining immediate descendants
by iterated executions of the \(p\)-\textit{group generation algorithm}
\cite{HEO},
which was designed by Newman
\cite{Nm1},
implemented for \(p\in\lbrace 2,3\rbrace\) and \(R=C_p\times C_p\) by Ascione and collaborators
\cite{AHL,As1},
and implemented in full generality for GAP
\cite{GAP}
and MAGMA
\cite{BCP,BCFS,MAGMA}
by O'Brien
\cite{OB}.



\subsection{Construction of unramified abelian \(p\)-extensions}
\label{ss:pExtensions}

Routines for constructing all intermediate fields \(K\le L\le\mathrm{F}_{(c)}{K}\)
between a number field \(K\) and the ray class field \(\mathrm{F}_{(c)}{K}\)
modulo a given conductor \(c\) of \(K\)
have been implemented in MAGMA
\cite{MAGMA}
by Fieker
\cite{Fi}.
Here, we shall use this class field package
for finding unramified cyclic \(p\)-extensions \(L\vert K\) with conductor \(c=1\) only.
These fields are located between \(K\) and its Hilbert \(p\)-class field \(\mathrm{F}_p^1{K}\).



\subsection{The Artin pattern}
\label{ss:ArtinPattern}

Let \(p\) be a fixed prime
and \(K\) be a number field with \(p\)-class group \(\mathrm{Cl}_p{K}\) of order \(p^v\),
where \(v\ge 0\) denotes a non-negative integer.

\begin{definition}
\label{dfn:FieldLayers}

For each integer \(0\le n\le v\), the system
\(\mathrm{Lyr}_n{K}:=\lbrace K\le L\le\mathrm{F}_p^1{K}\mid \lbrack L:K\rbrack=p^n\rbrace\)
is called the \textit{\(n\)th layer} of abelian unramified \(p\)-extensions of \(K\).

\end{definition}


\begin{definition}
\label{dfn:FieldTTTandTKT}

For each intermediate field \(K\le L\le\mathrm{F}_p^1{K}\),
let \(J_{L\vert K}:\,\mathrm{Cl}_p{K}\to\mathrm{Cl}_p{L}\) be the \textit{class extension},
which can also be called the number theoretic \textit{transfer} from \(K\) to \(L\).

\begin{enumerate}

\item
Let
\(\tau(K):=\lbrack\tau_0{K};\ldots;\tau_v{K}\rbrack\)
be the \textit{multi-layered transfer target type} (TTT) of \(K\), where
\(\tau_n{K}:=(\mathrm{Cl}_p{L})_{L\in\mathrm{Lyr}_n{K}}\) for each \(0\le n\le v\).

\item
Let
\(\varkappa(K):=\lbrack\varkappa_0{G};\ldots;\varkappa_v{G}\rbrack\)
be the \textit{multi-layered transfer kernel type} (TKT) or
\textit{multi-layered \(p\)-capitulation type} of \(K\), where
\(\varkappa_n{K}:=(\ker{J_{L\vert K}})_{L\in\mathrm{Lyr}_n{K}}\) for each \(0\le n\le v\).

\end{enumerate}

\end{definition}


\begin{definition}
\label{dfn:FieldArtinPattern}

The pair
\(\mathrm{AP}(K):=(\tau(K),\varkappa(K))\)
is called the restricted \textit{Artin pattern} of \(K\).

\end{definition}


Let \(p\) be a prime number
and \(G\) be a pro-\(p\) group with finite abelianization \(G/G^\prime\),
more precisely, assume that the commutator subgroup \(G^\prime\)
is of index \((G:G^\prime)=p^v\) with an integer exponent \(v\ge 0\).

\begin{definition}
\label{dfn:GroupLayers}

For each integer \(0\le n\le v\), let
\(\mathrm{Lyr}_n{G}:=\lbrace G^\prime\le H\le G\mid (G:H)=p^n\rbrace\)
be the \textit{\(n\)th layer} of normal subgroups of \(G\) containing \(G^\prime\).

\end{definition}


\begin{definition}
\label{dfn:GroupTTTandTKT}

For any intermediate group \(G^\prime\le H\le G\), we denote by
\(T_{G,H}:\,G\to H/H^\prime\)
the \textit{Artin transfer} homomorphism from \(G\) to \(H/H^\prime\)
\cite[Dfn. 3.1]{Ma9},
and by \(\tilde{T}_{G,H}:\,G/G^\prime\to H/H^\prime\) the \textit{induced transfer}.

\begin{enumerate}

\item
Let
\(\tau(G):=\lbrack\tau_0{G};\ldots;\tau_v{G}\rbrack\)
be the \textit{multi-layered transfer target type} (TTT) of \(G\), where
\(\tau_n{G}:=(H/H^\prime)_{H\in\mathrm{Lyr}_n{G}}\) for each \(0\le n\le v\).

\item
Let
\(\varkappa(G):=\lbrack\varkappa_0{G};\ldots;\varkappa_v{G}\rbrack\)
be the \textit{multi-layered transfer kernel type} (TKT) of \(G\), where
\(\varkappa_n{G}:=(\ker{\tilde{T}_{G,H}})_{H\in\mathrm{Lyr}_n{G}}\) for each \(0\le n\le v\).

\end{enumerate}

\end{definition}


\begin{definition}
\label{dfn:GroupArtinPattern}

The pair
\(\mathrm{AP}(G):=(\tau(G),\varkappa(G))\)
is called the restricted \textit{Artin pattern} of \(G\).

\end{definition}



\begin{theorem}
\label{thm:ArtinPattern}
Let \(p\) be a prime number.
Assume that \(K\) is a number field, and let
\(\mathfrak{G}:=\mathrm{G}_p^2{K}\) be the second \(p\)-class group of \(K\).
Then \(\mathfrak{G}\) and \(K\) share a common restricted Artin pattern,

\begin{equation}
\label{eqn:ArtinPattern}
\mathrm{AP}(\mathfrak{G})=\mathrm{AP}(K), \quad \text{ that is } \quad \tau(\mathfrak{G})=\tau(K)\text{ and } \varkappa(\mathfrak{G})=\varkappa(K),
\end{equation}

\noindent
in the sense of componentwise isomorphisms.

\end{theorem}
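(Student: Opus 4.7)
The plan is to apply the reciprocity isomorphisms already invoked in \S\ref{ss:ClassExtAndTransfer} in a systematic, layer-by-layer fashion, and to exhibit a bijection between the indexing sets for the field-theoretic and group-theoretic Artin patterns that respects both the target and kernel components.

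First, I would set up the Galois correspondence. The second Hilbert \(p\)-class field \(\mathrm{F}_p^2{K}=\mathrm{F}_p^1{\mathrm{F}_p^1{K}}\) is Galois over \(K\) with group \(\mathfrak{G}\), and its subfield fixed by \(\mathfrak{G}^\prime\) is precisely \(\mathrm{F}_p^1{K}\) by Artin reciprocity for \(K\). Hence assigning \(L\mapsto H:=\mathrm{Gal}(\mathrm{F}_p^2{K}\vert L)\) gives an inclusion-reversing bijection between intermediate fields \(K\le L\le \mathrm{F}_p^1{K}\) and intermediate groups \(\mathfrak{G}^\prime\le H\le\mathfrak{G}\), preserving indices \(\lbrack L:K\rbrack=(\mathfrak{G}:H)=p^n\). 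This produces, for every \(0\le n\le v\), a bijection \(\mathrm{Lyr}_n{K}\longleftrightarrow\mathrm{Lyr}_n{\mathfrak{G}}\) under which the two Artin patterns can be compared componentwise.

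Second, for each corresponding pair \(L\leftrightarrow H\), I would verify the two required isomorphisms. For the target, one needs \(\mathrm{F}_p^1{L}\subseteq\mathrm{F}_p^2{K}\): since \(L\subseteq\mathrm{F}_p^1{K}\), any unramified abelian \(p\)-extension of \(L\) composed with \(\mathrm{F}_p^1{K}\) is unramified abelian over \(\mathrm{F}_p^1{K}\), hence lies in \(\mathrm{F}_p^2{K}\). Applying Artin reciprocity to \(L\) yields \(\mathrm{Cl}_p{L}\simeq H/H^\prime\), which matches \(\tau_n{K}\) with \(\tau_n{\mathfrak{G}}\) for each layer. For the kernel, the commutative square from \S\ref{ss:ClassExtAndTransfer}, whose horizontal arrows are the reciprocity isomorphisms \(\mathrm{Cl}_p{K}\simeq\mathfrak{G}/\mathfrak{G}^\prime\) and \(\mathrm{Cl}_p{L}\simeq H/H^\prime\) and whose vertical arrows are \(J_{L\vert K}\) and \(\tilde{T}_{\mathfrak{G},H}\), identifies \(\ker{J_{L\vert K}}\) with \(\ker{\tilde{T}_{\mathfrak{G},H}}\), matching \(\varkappa_n{K}\) with \(\varkappa_n{\mathfrak{G}}\).

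The main obstacle is not a conceptual one but a bookkeeping one: one must ensure that the chosen identifications \(\mathrm{Cl}_p{K}\simeq\mathfrak{G}/\mathfrak{G}^\prime\) and \(\mathrm{Cl}_p{L}\simeq H/H^\prime\) are compatible with the global transfer–class-extension square simultaneously for every intermediate \(L\); this is precisely the functoriality built into Artin's reciprocity law \cite{Ar1,Ar2} and was already used in \S\ref{ss:ClassExtAndTransfer}. Once this compatibility is asserted, assembling the layerwise isomorphisms into a single equality \(\mathrm{AP}(\mathfrak{G})=\mathrm{AP}(K)\) in the sense of componentwise isomorphisms is purely formal.
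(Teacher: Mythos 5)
Your proposal is correct and takes essentially the same route as the paper, whose own proof is merely a citation of Artin's reciprocity law and the transfer theorem as worked out precisely by Hasse \cite{Ha2}: your Galois-correspondence bookkeeping, the inclusion \(\mathrm{F}_p^1{L}\subseteq\mathrm{F}_p^2{K}\), and the identification \(H/H^\prime\simeq\mathrm{Cl}_p{L}\) are exactly the standard reductions, and the one genuinely deep step --- the commutativity of the square relating \(J_{L\vert K}\) to \(\tilde{T}_{\mathfrak{G},H}\) --- is correctly attributed to \cite{Ar1,Ar2} rather than reproved, matching the paper's treatment in \S\ \ref{ss:ClassExtAndTransfer}. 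In fact your write-up supplies more of the intermediate detail than the paper itself does.
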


\begin{proof}
A sketch of the proof is indicated in
\cite{My},
\cite[\S\ 2.3, pp. 476--478]{Ma2}
and
\cite[Thm. 1.1, p. 402]{Ma4},
but the precise proof has been given by Hasse in
\cite[\S\ 27, pp. 164--175]{Ha2}.
\end{proof}



\begin{theorem}
\label{thm:RstrAPofCompleteCover}

Let \(\mathfrak{G}\) be a finite metabelian \(p\)-group.
Then all elements of the complete cover of \(\mathfrak{G}\) share a common restricted Artin pattern:

\begin{equation}
\label{eqn:RstrAPofCompleteCover}
 \mathrm{AP}(G)=\mathrm{AP}(\mathfrak{G}), \quad \text{ for all } \quad G\in\mathrm{cov}_c(\mathfrak{G}).
\end{equation}

\end{theorem}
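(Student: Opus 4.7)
The plan is to exploit the canonical projection \(\pi\colon G\twoheadrightarrow G/G^{\prime\prime}\simeq\mathfrak{G}\) and transport every ingredient of \(\mathrm{AP}(G)\) through it. The pivotal algebraic observation, used throughout, is the chain
\[G^{\prime\prime}=[G^\prime,G^\prime]\le [H,H]=H^\prime\]
valid for \emph{every} subgroup \(H\) of \(G\) containing \(G^\prime\). Combined with the fact that commutator subgroups commute with quotients, this allows me to rewrite everything purely in terms of the metabelian group \(\mathfrak{G}\).

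\textbf{Step 1 (layer bijection).} Since \(G^{\prime\prime}\le G^\prime\le H\), the image \(\bar H:=H/G^{\prime\prime}\) is a subgroup of \(\mathfrak{G}\) containing \(\mathfrak{G}^\prime=G^\prime/G^{\prime\prime}\), and taking preimages is the inverse. The index \((G:H)=(\mathfrak{G}:\bar H)\) is preserved, so \(\mathrm{Lyr}_n(G)\leftrightarrow\mathrm{Lyr}_n(\mathfrak{G})\) for every \(0\le n\le v\).

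\textbf{Step 2 (TTT).} From \(G^{\prime\prime}\le H^\prime\) and the naturality of commutators under quotients one has \(\bar H^\prime=H^\prime/G^{\prime\prime}\); the third isomorphism theorem then gives
\[\bar H/\bar H^\prime=(H/G^{\prime\prime})/(H^\prime/G^{\prime\prime})\simeq H/H^\prime.\]
This yields a componentwise isomorphism \(\tau_n(G)\simeq\tau_n(\mathfrak{G})\), so \(\tau(G)=\tau(\mathfrak{G})\).

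\textbf{Step 3 (TKT).} I would establish naturality of the Artin transfer under \(\pi\). Choosing a left transversal \(\{g_i\}\) of \(H\) in \(G\) and writing \(g\cdot g_i=g_{\sigma(i)}h_i\) with \(h_i\in H\), the projected elements \(\{\bar g_i\}\) form a genuine transversal of \(\bar H\) in \(\mathfrak{G}\) (no collapse, since \((G:H)=(\mathfrak{G}:\bar H)\)), and they satisfy \(\bar g\cdot\bar g_i=\bar g_{\sigma(i)}\bar h_i\) with the \emph{same} permutation \(\sigma\). Hence the square with \(T_{G,H}\) on top, \(T_{\mathfrak{G},\bar H}\) on the bottom, \(\pi\) on the left, and the isomorphism of Step 2 on the right, commutes. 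Passing to the induced transfers via the canonical isomorphism \(G/G^\prime\simeq\mathfrak{G}/\mathfrak{G}^\prime\) (which exists because \(G^{\prime\prime}\le G^\prime\)), the kernels correspond, giving \(\varkappa_n(G)=\varkappa_n(\mathfrak{G})\) and hence \(\varkappa(G)=\varkappa(\mathfrak{G})\).

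The main obstacle is Step 3: one must verify rigorously that \(\pi\) carries a transversal to a transversal and that the Schreier-like product formula defining the Artin transfer is preserved term by term. After that, the conclusion is automatic. Note that the complete cover allows \(G\) to be infinite pro-\(p\), but this causes no issue: because \(G/G^\prime\simeq\mathfrak{G}/\mathfrak{G}^\prime\) is finite, every \(H\in\mathrm{Lyr}_n(G)\) has finite index in \(G\), so the Artin transfer is well defined.
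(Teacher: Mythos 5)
Your proof is correct and is essentially the argument behind the result the paper invokes: the paper itself only cites the Main Theorem of \cite[Thm. 5.4]{Ma9}, and the proof given there rests on exactly your observation that every subgroup \(H\) occurring in a layer satisfies \(G^\prime\le H\), hence \(G^{\prime\prime}\le H^\prime\), so that the whole restricted Artin pattern factors through \(G/G^{\prime\prime}\simeq\mathfrak{G}\). Your Steps 1--3 (layer bijection via the correspondence theorem, \(\bar H/\bar H^\prime\simeq H/H^\prime\), and naturality of the transfer under the projection \(\pi\), checked on a transversal) are all sound, including the remark that finiteness of \(G/G^\prime\) keeps the transfer well defined in the pro-\(p\) case.
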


\begin{proof}
This is the Main Theorem of
\cite[Thm. 5.4, p. 86]{Ma9}.
\end{proof}



\begin{definition}
\label{dfn:FieldIPADandIPOD}

The first order approximation
\(\tau^{(1)}{K}:=\lbrack\tau_0{K};\tau_1{K}\rbrack\)
of the TTT, resp.
\(\varkappa^{(1)}{K}:=\lbrack\varkappa_0{K};\varkappa_1{K}\rbrack\)
of the TKT,
is called the \textit{index-\(p\) abelianization data} (IPAD),
resp. \textit{index-\(p\) obstruction data} (IPOD), of \(K\).

\end{definition}


\begin{definition}
\label{dfn:GroupIPADandIPOD}

The first order approximation
\(\tau^{(1)}{G}:=\lbrack\tau_0{G};\tau_1{G}\rbrack\)
of the TTT, resp.
\(\varkappa^{(1)}{G}:=\lbrack\varkappa_0{G};\varkappa_1{G}\rbrack\)
of the TKT,
is called the \textit{index-\(p\) abelianization data} (IPAD),
resp. \textit{index-\(p\) obstruction data} (IPOD), of \(G\).

\end{definition}


As mentioned in \S\
\ref{ss:ClassExtAndTransfer},
the TTT and TKT, frequently even the IPAD and IPOD,
are sufficient for identifying the second \(p\)-class group
\(\mathfrak{G}=\mathrm{G}_p^2{K}\) of a number field \(K\).
This was discovered by ourselves in
\cite{Ma1,Ma3,Ma4},
and independently by Boston and collaborators
\cite{BoLG,BBH}.

For finding the \(p\)-class tower group \(G=\mathrm{G}_p^\infty{K}\), however,
we need the following non-abelian generalization,
which requires computing extensions of relative degree \(p^2\) instead of \(p\) over \(K\),
and was introduced by ourselves in
\cite{Ma7,Ma10,Ma11}
and by Bartholdi, Bush, and Nover in
\cite{Bu,BaBu,BoNo,No}.

\begin{definition}
\label{dfn:FieldIteratedIPAD}

\(\tau^{(2)}{K}:=\lbrack\tau_0{K};(\tau^{(1)}{L})_{L\in\mathrm{Lyr}_1{K}}\rbrack\)
is called \textit{iterated} IPAD \textit{of second order} of \(K\).

\end{definition}


\begin{definition}
\label{dfn:GroupIteratedIPAD}

\(\tau^{(2)}{G}:=\lbrack\tau_0{G};(\tau^{(1)}{H})_{H\in\mathrm{Lyr}_1{G}}\rbrack\)
is called \textit{iterated} IPAD \textit{of second order} of \(G\).

\end{definition}


\begin{theorem}
\label{thm:IteratedIPAD}
Let \(p\) be a prime number.
Assume that \(K\) is a number field with \(p\)-class tower group
\(G:=\mathrm{G}_p^\infty{K}\).
Then \(G\) and \(K\) share a common iterated IPAD of second order,

\begin{equation}
\label{eqn:IteratedIPAD}
\tau^{(2)}{G}=\tau^{(2)}{K},
\end{equation}

\noindent
in the sense of componentwise isomorphisms.

\end{theorem}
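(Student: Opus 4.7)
The plan is to reduce the second-order identity to first-order identities at each base point $L\in\mathrm{Lyr}_1{K}$, which are already furnished by Theorems \ref{thm:ArtinPattern} and \ref{thm:RstrAPofCompleteCover}. Writing $\mathrm{F}:=\mathrm{F}_p^\infty{K}$, I would first set up the Galois correspondence $L\leftrightarrow H:=\mathrm{Gal}(\mathrm{F}\vert L)$ between $\mathrm{Lyr}_1{K}$ and $\mathrm{Lyr}_1{G}$: any intermediate $L$ with $K\le L\le\mathrm{F}_p^1{K}$ and $\lbrack L:K\rbrack=p$ is abelian over $K$, so the corresponding $H$ is a normal subgroup of index $p$ containing $G^\prime$, and conversely. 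The key arithmetical identification is $\mathrm{F}_p^\infty{L}=\mathrm{F}_p^\infty{K}$: the inclusion ``$\subseteq$'' holds because any extension of $L$ inside $\mathrm{F}$ remains unramified over $L$, and ``$\supseteq$'' because $\mathrm{F}_p^\infty{L}\vert K$ is a tower of unramified pro-$p$ extensions, hence contained in $\mathrm{F}$. This yields $H=\mathrm{G}_p^\infty{L}$.

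Next, the zeroth layer matches via class field theory applied to $K$: $\tau_0{G}=G/G^\prime\simeq\mathrm{Cl}_p{K}=\tau_0{K}$, and analogously $\tau_0{H}\simeq\mathrm{Cl}_p{L}=\tau_0{L}$. For the first layer attached to a fixed $L$, I would invoke Theorem \ref{thm:ArtinPattern} with $L$ replacing $K$, giving $\mathrm{AP}(L)=\mathrm{AP}(\mathrm{G}_p^2{L})$ and in particular $\tau^{(1)}{L}=\tau^{(1)}(\mathrm{G}_p^2{L})$. Since $\mathrm{G}_p^2{L}=\mathrm{Gal}(\mathrm{F}_p^2{L}\vert L)$ is finite and metabelian by construction, and $\mathrm{G}_p^\infty{L}=H$ belongs to $\mathrm{cov}_c(\mathrm{G}_p^2{L})$ because its second derived quotient equals $\mathrm{G}_p^2{L}$, Theorem \ref{thm:RstrAPofCompleteCover} yields $\tau^{(1)}(\mathrm{G}_p^2{L})=\tau^{(1)}{H}$. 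Combining, $\tau^{(1)}{L}=\tau^{(1)}{H}$ for every corresponding pair $(L,H)$.

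Finally, assembling the layers,
\begin{equation*}
\tau^{(2)}{K}=\bigl\lbrack\tau_0{K};\,(\tau^{(1)}{L})_{L\in\mathrm{Lyr}_1{K}}\bigr\rbrack=\bigl\lbrack\tau_0{G};\,(\tau^{(1)}{H})_{H\in\mathrm{Lyr}_1{G}}\bigr\rbrack=\tau^{(2)}{G},
\end{equation*}
where the indexing of entries is transported by the bijection $L\leftrightarrow H$, and the identifications are understood componentwise up to isomorphism. The main obstacle I anticipate is the verification that $\mathrm{F}_p^\infty{L}=\mathrm{F}_p^\infty{K}$, so that $H$ can legitimately be identified with the $p$-tower group of $L$ and thus placed inside $\mathrm{cov}_c(\mathrm{G}_p^2{L})$; once this identification is secured, the second-order statement propagates automatically from the first-order theorems already in hand.
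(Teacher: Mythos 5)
Your proof is correct, and it differs from the paper's in how the second layer is handled. The paper also derives the first-layer identities from Theorems \ref{thm:ArtinPattern} and \ref{thm:RstrAPofCompleteCover} applied to \(K\), but then treats the second layer by a direct Galois-theoretic computation inside \(G\): for \(U\in\mathrm{Lyr}_1{H}\) with fixed field \(M\), it identifies \(U^\prime\) with \(\mathrm{Gal}(\mathrm{F}_p^\infty{K}\vert\mathrm{F}_p^1{M})\) as the smallest subgroup of \(U\) with abelian quotient, and concludes \(U/U^\prime\simeq\mathrm{Cl}_p{M}\) by Artin reciprocity over \(M\). You instead relativize: you establish \(\mathrm{F}_p^\infty{L}=\mathrm{F}_p^\infty{K}\), hence \(H=\mathrm{G}_p^\infty{L}\), and then re-apply the two cited theorems with \(L\) as the base field, using \(H\in\mathrm{cov}_c(\mathrm{G}_p^2{L})\). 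Both arguments turn on the same arithmetic fact, namely that the maximal unramified pro-\(p\) extension does not change when one passes from \(K\) to an unramified subfield of the tower (the paper in fact only needs the weaker inclusion \(\mathrm{F}_p^1{M}\subseteq\mathrm{F}_p^\infty{K}\)). Your route is more modular and would extend by induction to iterated IPADs of arbitrary order with no extra work; the paper's hands-on computation has the advantage of exposing exactly why \(G\) cannot be replaced by \(\mathfrak{G}=\mathrm{G}_p^2{K}\) (in general neither \(U\unlhd G\) nor \(G^{\prime\prime}\le U^\prime\)) and of showing that \(\mathrm{G}_p^3{K}\) already suffices in place of \(G\) -- observations that are less visible from your packaging.
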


\begin{proof}
In Theorem
\ref{thm:ArtinPattern},
we proved that \(\tau(\mathfrak{G})=\tau(K)\), and from Theorem
\ref{thm:RstrAPofCompleteCover}
we know that \(\tau(G)=\tau(\mathfrak{G})\).
Thus we have, in particular,
\(\tau^{(1)}{G}=\lbrack\tau_0{G};\tau_1{G}\rbrack=\lbrack\tau_0{K};\tau_1{K}\rbrack=\tau^{(1)}{K}\),
where \(\tau_0{G}=G/G^\prime\simeq\mathrm{Cl}_p{K}=\tau_0{K}\) and
\(\tau_1{G}=(\tau_0{H})_{H\in\mathrm{Lyr}_1{G}}=(\tau_0{L})_{L\in\mathrm{Lyr}_1{K}}=\tau_1{K}\),
i.e., \(\tau_0{H}=H/H^\prime\simeq\mathrm{Cl}_p{L}=\tau_0{L}\),
for all \(H\in\mathrm{Lyr}_1{G}\) such that \(H=\mathrm{Gal}(\mathrm{F}_p^\infty{K}\vert L)\) with \(L\in\mathrm{Lyr}_1{K}\).

It remains to show that \(\tau_1{H}=(\tau_0{U})_{U\in\mathrm{Lyr}_1{H}}=(\tau_0{M})_{M\in\mathrm{Lyr}_1{L}}=\tau_1{L}\).
Let \(U\in\mathrm{Lyr}_1{H}\), then \(U=\mathrm{Gal}(\mathrm{F}_p^\infty{K}\vert M)\) for some \(M\in\mathrm{Lyr}_1{L}\),
since \(G^{\prime\prime\prime}\unlhd H^{\prime\prime}\unlhd U^\prime\unlhd H^\prime\unlhd U\unlhd H\) and
\(p=(H:U)=\#(H/U)=\#\mathrm{Gal}(M\vert L)=\lbrack M:L\rbrack\).
Since \(U^\prime\) is the smallest subgroup of \(U\) with abelian quotient \(U/U^\prime\),
we must have \(U^\prime=\mathrm{Gal}(\mathrm{F}_p^\infty{K}\vert\mathrm{F}_p^1{M})\) and thus
\(\tau_0{U}=U/U^\prime\simeq\mathrm{Gal}(\mathrm{F}_p^1{M}\vert M)\simeq\mathrm{Cl}_p{M}=\tau_0{M}\), as required.
Observe that, in general, neither \(U\unlhd G\) nor \(G^{\prime\prime}\le U^\prime\),
and thus \(G=\mathrm{G}_p^\infty{K}\) cannot be replaced by \(\mathfrak{G}=\mathrm{G}_p^2{K}\).
We could, however, take \(\mathrm{G}_p^3{K}\) instead of \(G\).
\end{proof}


\begin{remark}
\label{rmk:ArtinPattern}

The TKT and the IPOD contain some standard information which can be omitted.

\begin{enumerate}

\item
Since the zeroth layer (top layer), \(\mathrm{Lyr}_0{G}=\lbrace G\rbrace\),
consists of the group \(G\) alone,
and \(T_{G,G}:\,G\to G/G^\prime\) is the natural projection onto the commutator quotient
with kernel \(\ker{T_{G,G}}=G^\prime\), resp. \(\ker{\tilde{T}_{G,G}}=G^\prime/G^\prime\simeq 1\),
we usually omit the trivial top layer \(\varkappa_0{G}=\lbrace 1\rbrace\)
and identify the IPOD \(\varkappa^{(1)}{G}\) with the first layer \(\varkappa_1{G}\) of the TKT.

\item
In the case of an elementary abelianization of rank two, \((G:G^\prime)=p^2\),
we also identify the TKT \(\varkappa(G)\) with its first layer \(\varkappa_1{G}\),
since the second layer (bottom layer), \(\mathrm{Lyr}_2{G}=\lbrace G^\prime\rbrace\),
consists of the commutator subgroup \(G^\prime\) alone,
and the kernel of \(T_{G,G^\prime}:\,G\to G^\prime/G^{\prime\prime}\)
is always \textit{total}, that is \(\ker{T_{G,G^\prime}}=G\), resp. \(\ker{\tilde{T}_{G,G^\prime}}=G/G^\prime\),
according to the \textit{principal ideal theorem}
\cite{Fw}.

\end{enumerate}

\end{remark}



\subsection{Monotony on descendant trees}
\label{ss:MonotonyDescendantTrees}

\begin{definition}
\label{dfn:DescTree}

Let \(p\) be a prime 
and \(G\), \(H\) and \(R\) be finite \(p\)-groups.

The \textit{lower central series} (LCS) \((\gamma_n{G})_{n\ge 1}\) of \(G\) is defined recursively by
\(\gamma_1{G}:=G\), and \(\gamma_n{G}:=\left\lbrack\gamma_{n-1}{G},G\right\rbrack\) for \(n\ge 2\).

We call \(G\) an \textit{immediate descendant} (or \textit{child}) of \(H\),
and \(H\) \textit{the parent} of \(G\),
if \(H\simeq G/\gamma_c{G}\) is isomorphic to the
biggest non-trivial lower central quotient of \(G\), that is,
to the image of the natural projection \(\pi:\,G\to G/\gamma_c{G}\) of \(G\) onto the quotient by
the last non-trivial term \(\gamma_c{G}>1\) of the LCS of \(G\),
where \(c:=\mathrm{cl}(G)\) denotes the \textit{nilpotency class} of \(G\).
In this case, we consider the projection \(\pi\)
as a directed edge \(G\to H\) from \(G\) to \(H\simeq\pi{G}\),
and we speak about the \textit{parent operator},
\(\pi:\,G\to \pi{G}=G/\gamma_c{G}\simeq H\).

We call \(G\) a \textit{descendant} of \(H\),
and \(H\) an \textit{ancestor} of \(G\),
if there exists a finite \textit{path} of directed edges
\(\left(Q_j\to Q_{j+1}\right)_{0\le j<\ell}\) such that \(G=Q_0\), \(Q_{j+1}=\pi{Q_j}\) for \(0\le j<\ell\), and \(H=Q_\ell\), that is,
\(G=Q_0\to Q_1\to\ldots\to Q_{\ell-1}\to Q_\ell=H\),
where \(\ell\ge 0\) denotes the \textit{path length}.

The \textit{descendant tree} of \(R\), denoted by \(\mathcal{T}(R)\),
is the rooted directed tree with root \(R\)
having the isomorphism classes of all descendants of \(R\) as its \textit{vertices}
and all (child, parent)-pairs \((G,H)\) among the descendants \(G,H\) of \(R\)
as its \textit{directed edges} \(G\to\pi{G}\simeq H\).
By means of formal iterations \(\pi^j\) of the parent operator \(\pi\),
each vertex of the descendant tree \(\mathcal{T}(R)\) can be connected with \(R\)
by a finite path of edges:
\(\mathcal{T}(R)=\left\lbrace G\mid G=\pi^0{G}\to\pi^1{G}\to\pi^2{G}\to\ldots\to\pi^\ell{G}=R \text{ for some } \ell\ge 0\right\rbrace\).

\end{definition}


\begin{theorem}
\label{thm:Monotony}

Let \(\mathcal{T}(R)\) be the descendant tree with root \(R>1\), a finite non-trivial \(p\)-group,
and let \(G\to\pi{G}\) be a directed edge of the tree.
Then the restricted Artin pattern \(\mathrm{AP}=(\tau,\varkappa)\)
satisfies the following monotonicity relations

\begin{equation}
\label{eqn:Monotony}
\begin{aligned}
     \tau(G) &\ge \tau(\pi{G}), \\
\varkappa(G) &\le \varkappa(\pi{G}),
\end{aligned}
\end{equation}

\noindent
that is, the TTT \(\tau\) is an isotonic mapping
and the TKT \(\varkappa\) is an antitonic mapping
with respect to the partial order \(G>\pi{G}\)
induced by the directed edges \(G\to\pi{G}\).

\end{theorem}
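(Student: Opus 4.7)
The plan is to exploit the fact that the parent map $\pi$ is a central epimorphism with small kernel. Writing $c := \mathrm{cl}(G)$ and $N := \gamma_c G$, we have $\pi G \simeq G/N$; since $R > 1$ forces $\pi G > 1$, we are in the case $c \geq 2$, and then $N \leq \gamma_2 G = G'$ and $N$ is central in $G$. Every subgroup $H$ of $G$ containing $G'$ therefore automatically contains $N$, so the assignment $H \mapsto H/N$ is an index-preserving bijection $\mathrm{Lyr}_n(G) \to \mathrm{Lyr}_n(G/N)$ for each $n$. This identifies the indexing sets of the components of $\tau(G)$ and $\tau(\pi G)$, and of $\varkappa(G)$ and $\varkappa(\pi G)$, so both claimed inequalities reduce to componentwise statements along these identified layers.

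For the isotonicity $\tau(G) \geq \tau(\pi G)$, I would apply the third isomorphism theorem to obtain
\begin{equation*}
(H/N)\big/(H/N)' \;=\; (H/N)\big/(H'N/N) \;\simeq\; H/(H'N),
\end{equation*}
which is a homomorphic image of $H/H'$. Under the componentwise partial order on abelian $p$-groups in which homomorphic images are counted as smaller, this yields $\tau_n(\pi G) \leq \tau_n(G)$ for every layer $n$, proving the first half.

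For the antitonicity $\varkappa(G) \leq \varkappa(\pi G)$, the key ingredient is naturality of the Artin transfer under the central projection $\pi$. I would establish that the square with top row $\tilde{T}_{G,H}\colon G/G' \to H/H'$, bottom row $\tilde{T}_{\pi G,\, H/N}\colon (G/N)/(G/N)' \to (H/N)/(H/N)'$, left vertical the canonical isomorphism $G/G' \xrightarrow{\sim} (G/N)/(G'/N)$ (available because $N \leq G'$), and right vertical the canonical surjection $H/H' \twoheadrightarrow H/(H'N)$, commutes. A direct diagram chase then gives $\ker\tilde{T}_{G,H} \subseteq \ker\tilde{T}_{\pi G,\, H/N}$ after the identification of domains, which is precisely the componentwise statement $\varkappa_n(G) \leq \varkappa_n(\pi G)$.

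The only step that demands genuine calculation rather than formal manipulation is the commutativity of this square, i.e., the compatibility of the Artin transfer with a central quotient. I would prove it via the explicit transversal formula: choosing a right transversal $(s_i)_i$ of $H$ in $G$, the images $(\pi s_i)_i$ form a right transversal of $H/N$ in $G/N$ because $N \leq H$, and the Schreier-style product $\prod_i s_{\sigma(i)}^{-1}\, g\, s_i$ defining $T_{G,H}(g)$ is carried under $\pi$ to the analogous product defining $T_{\pi G,\, H/N}(\pi g)$. Reading this identity modulo $(H/N)' = H'N/N$ delivers the required commutativity, after which both monotonicity assertions follow immediately from the componentwise analysis above.
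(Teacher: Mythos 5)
Your proposal is correct: the reduction to the central quotient \(N=\gamma_c{G}\le G'\), the index-preserving layer bijection \(H\mapsto H/N\), the epimorphism \(H/H'\twoheadrightarrow H/(H'N)\) for the isotonicity of \(\tau\), and the commuting transfer square (proved via a transversal that descends because \(N\le H\)) yielding \(\ker\tilde{T}_{G,H}\subseteq\ker\tilde{T}_{\pi G,H/N}\) for the antitonicity of \(\varkappa\) are all sound and complete. The paper itself gives no argument beyond citing \cite[Thm. 3.1]{Ma12}, and the proof there, which rests on the compatibility of Artin transfers with quotients developed in \cite{Ma9}, is essentially the one you have reconstructed.
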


\begin{proof}
This is Theorem 3.1 in
\cite{Ma12}.
\end{proof}

This result yields the crucial break-off condition for recursive executions
of the \(p\)-group generation algorithm,
when we want to find a finite \(p\)-group with assigned Artin pattern.



\subsection{State of research}
\label{ss:StateOfResearch}

The state of research on \(p\)-class field towers in the year \(2008\)
was summarized in a succinct form by McLeman
\cite{McL}.
He literally pointed out the following problem on p. \(200\) and p. \(205\) of his paper.

\begin{problem}
\label{prb:McLeman}
For odd primes \(p\),
there are no known examples of imaginary quadratic number fields
with \(p\)-class rank \(2\) and
either an infinite \(p\)-class tower
or a \(p\)-class tower of length bigger than \(2\).
For \(p=3\), the longest known finite towers are of length \(2\).
\end{problem}

McLeman's survey on the state of the art changed
when Bush and ourselves found the first imaginary quadratic fields having
\(3\)-class field towers of exact length \(3\)
\cite{BuMa}
on \(24\) August \(2012\).

The reason why McLeman formulated this open problem
for \textit{imaginary quadratic} fields with \(\varrho=2\)
is the well-known fact that \(\varrho=1\) implies an abelian single-stage tower,
and, on the other hand, the strong criterion by Koch and Venkov
\cite{KoVe}
that \(\varrho\ge 3\) enforces an infinite \(p\)-class tower
for odd primes \(p\ge 3\).
We shall come back to this criterion
in \S\
\ref{s:Infinite3Towers}
on infinite \(3\)-class towers.

However, until \(2012\), a much more extensive problem
for finite \(p\)-class towers
of any algebraic number field \(K\) was open.

\begin{problem}
\label{prb:2012}
No examples are known of number fields \(K\) having
a \(2\)-class tower of length \(\ell_2{K}\ge 4\) or
a \(3\)-class tower of length \(\ell_3{K}\ge 3\) or
a \(p\)-class tower of length \(\ell_p{K}\ge 2\) for \(p\ge 5\).
\end{problem}

Since the joint discovery with Bush
\cite{BuMa},
we unsuccessfully tried to
extend the result from \(p=3\) to \(p=5\) for nearly \(4\) years,
as documented in the historical introduction of
\cite{Ma12},
until a lucky coincidence of several unexpected facts
enabled a significant break-through on \(07\) April \(2016\):

\begin{enumerate}

\item
Due to a bug in earlier MAGMA versions,
the \(5\)-capitulation type \(\varkappa(K)\) of the real quadratic field \(K=\mathbb{Q}(\sqrt{d})\)
with discriminant \(d=3\,812\,377\) could not be computed
until MAGMA V2.21-11 was released on \(04\) April \(2016\).

\item
In the tree \(\mathcal{T}(\langle 5^2,2\rangle)\) of \(5\)-groups of coclass \(1\), 
the crucial bifurcation at the \(4\)th mainline vertex \(\langle 5^5,30\rangle\) was unknown up to now.
It gives rise to the candidates \(\langle 5^5,30\rangle-\#2;n\), \(n\in\lbrace 10,22,23,34,35\rbrace\), 
for the \(5\)-tower group \(\mathrm{G}_5^\infty{K}\) of \(K=\mathbb{Q}(\sqrt{3\,812\,377})\).

\item
Whereas the smallest non-metabelian \(p\)-tower groups \(G=\mathrm{G}_p^\infty{K}\), for \(p\ge 3\) an odd prime,
of imaginary quadratic fields \(K=\mathbb{Q}(\sqrt{d})\), \(d<0\), are of order \(\lvert G\rvert=p^8\) and coclass \(\mathrm{cc}(G)=2\),
those of real quadratic fields, \(d>0\), have order only \(\lvert G\rvert=p^7\), coclass \(\mathrm{cc}(G)=1\),
and do not require arithmetical computations of high complexity 
with extensions \(M\vert K\) of relative degree \(\lbrack M:K\rbrack=p^2\) for their justification.
IPADs of first order are sufficient.

\end{enumerate}



\subsection{Overview}
\label{ss:Overview}

To avoid any misinterpretations of our notation,
an essential remark must be made at the beginning:
Throughout this article, we use the \textit{logarithmic form}
of type invariants of finite abelian \(p\)-groups \(A\),
that is, we abbreviate the cumbersome power form of type invariants
\(\left(\overbrace{p^{e_1},\ldots,p^{e_1}}^{r_1 \text{ times}},\ldots,\overbrace{p^{e_n},\ldots,p^{e_n}}^{r_n \text{ times}}\right)\),
with strictly decreasing \(e_1>\ldots>e_n\), by writing
\(\left(e_1^{r_1},\ldots,e_n^{r_n}\right)\) 
with formal exponents \(r_i\ge 1\) denoting iteration.
If \(e_1<10\), 
which will always be the case in this paper,
then we even omit the separating commas, thus saving a lot of space.

The layout of this survey article is the following.
In \S\
\ref{s:ThreeStage5Towers}
we immediately celebrate our most recent sensational discovery
of the long desired three-stage towers of \(5\)-class fields.
We continue with a recall of the meanwhile well-known
three-stage towers of \(3\)-class fields in \S\
\ref{s:ThreeStage3Towers}
and of \(2\)-class fields in \S\
\ref{s:ThreeStage2Towers}.

In \S\
\ref{s:TreeTopologies}
we present the new phenomenon of tree topologies
expressing the mutual location of second and third \(p\)-class groups
on descendant trees of finite \(p\)-groups.
An important remark has to be made in \S\
\ref{s:BicyclicBiquadratic}
on the published form of the
Shafarevich theorem on the relation rank of the \(p\)-tower group
when the base field contains a primitive \(p\)th root of unity.

Although the focus will mainly be on the new phenomena of three-stage towers,
we also give, en passant in \S\
\ref{s:TwoStageTowers},
the first criteria for two-stage towers of \(p\)-class fields,
\(\ell_p{K}=2\), for \(p\in\lbrace 5,7\rbrace\),
independently of the base field \(K\).

In \S\
\ref{s:Infinite3Towers}
we consider \(3\)-class towers of
three complex quadratic fields \(K=\mathbb{Q}(\sqrt{d})\)
with \(3\)-class group \(\mathrm{Cl}_3{K}\) of type \((1^3)\),
which have infinite length \(\ell_3{K}=\infty\), according to
\cite{KoVe}.
We emphasize that we are far from having explicit pro-\(3\) presentations
of the \(3\)-tower groups \(G=\mathrm{G}_3^\infty{K}\) and we do not know
the rate of growth for the orders of the successive derived quotients
\(G/G^{(n)}\simeq\mathrm{G}_3^n{K}\), \(n\ge 2\).
Even for the second \(3\)-class groups \(\mathfrak{G}=\mathrm{G}_3^2{K}\),
we only have lower bounds for the orders.



\section{Three-stage towers of \(5\)-class fields}
\label{s:ThreeStage5Towers}

\begin{experiment}
\label{exp:5Capitulation}

As documented in \S\ 3.2.7, p. 427, of
\cite{Ma4},
we used the class field package by Fieker
\cite{Fi}
in the computational algebra system MAGMA
\cite{MAGMA}
for constructing the unramified cyclic quintic extensions \(L_i\vert K\), \(1\le i\le 6\),
of each of the \(377\) real quadratic fields \(K=\mathbb{Q}(\sqrt{d})\)
with discriminants \(0<d\le 26\,695\,193\)
and \(5\) class group of type \((1^2)\).
However, at that early stage, we only computed the first component \(\tau(K)\)
of the Artin pattern \(\mathrm{AP}(K)=(\tau(K),\varkappa(K))\),
since \(\tau(K)\) uniquely determines \(\varkappa(K)\) for the ground state of the TKTs a.\(2\) and a.\(3\),
according to Theorem 3.8 and Table 3.3 in
\cite[\S\ 3.2.5, pp. 423--424]{Ma4}.
In this manner, we were able to classify \(360=13+55+292\) cases with TKTs a.\(1\), a.\(2\), a.\(3\),
where the second \(5\)-class group \(\mathfrak{G}=\mathrm{G}_5^2{K}\) is of coclass \(\mathrm{cc}(\mathfrak{G})=1\),
listed in Table 3.4 of
\cite[\S\ 3.2.7, p. 427]{Ma4},
and to separate \(14\) cases with \(\mathrm{cc}(\mathfrak{G})=2\),
discussed in
\cite[\S\ 3.5.3, p. 449]{Ma4}.
There remained \(3\) cases of first excited states of the TKTs a.\(2\) and a.\(3\),
where the TTT 
\(\tau(K)=\lbrack 21^3,(1^2)^5\rbrack\)
is unable to distinguish between the TKTs.
As mentioned at the end of \S\
\ref{ss:StateOfResearch},
the first MAGMA version which admitted the computation of the TKTs for these difficult cases
was V2.21-11, released on \(04\) April \(2016\).
The result was TKT a.\(2\), \(\varkappa(K)=(1,0^5)\), for \(d\in\lbrace 3\,812\,377, 19\,621\,905\rbrace\)
and TKT a.\(3\), \(\varkappa(K)=(2,0^5)\), for \(d=21\,281\,673\).

\end{experiment}

After this initial number theoretic experiment with computational techniques of \S\
\ref{ss:pExtensions},
a translation from arithmetic to group theory with Artin's reciprocity law, described in \S\
\ref{ss:ClassExtAndTransfer},
maps the Artin pattern \(\mathrm{AP}(K)=(\tau(K),\varkappa(K))\) of the real quadratic fields \(K\)
to the Artin pattern \(\mathrm{AP}(\mathfrak{G})=(\tau(\mathfrak{G}),\varkappa(\mathfrak{G}))\)
of their second \(5\)-class groups \(\mathfrak{G}=\mathrm{G}_5^2{K}\),
which forms the input for the strategy of \textit{pattern recognition via Artin transfers}
by conducting a search for suitable finite \(5\)-groups \(\mathfrak{G}\)
having the prescribed Artin pattern \(\mathrm{AP}(\mathfrak{G})\).
This is done by recusive iterations of the \(p\)-group generation algorithm in \S\
\ref{ss:pGroupAlgorithm}
until a termination condition is satisfied, due to the monotony of Artin patterns on descendant trees in \S\
\ref{ss:MonotonyDescendantTrees}.

The reason why we decided to take the Artin pattern
\(\mathrm{AP}(\mathfrak{G})=(\tau(\mathfrak{G}),\varkappa(\mathfrak{G}))\),
with
\(\tau_0{\mathfrak{G}}=(1^2)\), \(\tau_1{\mathfrak{G}}=\lbrack 21^3,(1^2)^5\rbrack\), \(\varkappa_1{\mathfrak{G}}=(1,0^5)\),
of the first excited state of the TKT a.\(2\) as the search pattern
for seeking three-stage towers of \(5\)-class fields was the following.
Firstly, since the derived subgroup \(\mathfrak{G}^\prime\) for the ground state of TKT a.\(2\) and a.\(3\) is of type \((1^2)\),
a result of Blackburn ensures a two-stage tower with \(\ell_5{K}=2\) for these \(347\) cases.
Secondly, the first excited state of the TKT a.\(3\) does not admit a unique candidate
for the  second \(5\)-class groups \(\mathfrak{G}=\mathrm{G}_5^2{K}\),
and finally, the \(13\) cases of \(5\)-groups with TKT a.\(1\) form a subgraph with considerable complexity
of the coclass tree \(\mathcal{T}^1{R}\) with root \(R:=C_5\times C_5\).
Therefore, we arrived at the following group theoretic results.


\begin{proposition}
\label{prp:Second5ClassGroupGT}

Up to isomorphism, there exists a unique finite metabelian \(5\)-group \(\mathfrak{G}\) such that

\begin{equation}
\label{eqn:AP5}
\tau_0{\mathfrak{G}}=(1^2), \quad \tau_1{\mathfrak{G}}=\lbrack 21^3,(1^2)^5\rbrack, \quad \text{ and } \quad
\varkappa_1{\mathfrak{G}}=(1,0^5).
\end{equation}

\end{proposition}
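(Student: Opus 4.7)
The plan is to realize $\mathfrak{G}$ as a vertex of the descendant tree $\mathcal{T}(R)$ rooted at $R=C_5\times C_5=\langle 5^2,2\rangle$, and then use the monotonicity of the Artin pattern along the tree (Theorem \ref{thm:Monotony}) to reduce uniqueness to a finite enumeration that can be carried out with the $p$-group generation algorithm from \S\ref{ss:pGroupAlgorithm}. The starting observation is that $\tau_0(\mathfrak{G})=(1^2)$ says precisely $\mathfrak{G}/\mathfrak{G}'\simeq C_5\times C_5$, so $\mathfrak{G}$ must be a descendant of $R$. Since we are only asking for the unique metabelian witness, we may restrict to the metabelian subtree of $\mathcal{T}(R)$.

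Next I would use the qualitative shape of the prescribed Artin pattern to localize $\mathfrak{G}$ on a specific coclass branch. The TKT $\varkappa_1(\mathfrak{G})=(1,0^5)$, with exactly one non-trivial fixed-point entry, is the signature of the capitulation type a.2; groups of this TKT with abelianization $C_5\times C_5$ lie on the coclass-$1$ subtree $\mathcal{T}^1(R)$, as recalled in \S\ref{s:ThreeStage5Towers}. Within this subtree, the first-layer TTT $\tau_1(\mathfrak{G})=[21^3,(1^2)^5]$ picks out the level: exactly one of the six maximal subgroups (necessarily the distinguished one containing $\gamma_2(\mathfrak{G})$) must have abelianization of type $(2,1,1,1)$, while the remaining five are elementary abelian of rank $2$. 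Comparing with the standard TTT sequence along the mainline of $\mathcal{T}^1(R)$, this forces nilpotency class $\mathrm{cl}(\mathfrak{G})=4$, i.e.\ the first excited state above the ground state of class $3$.

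Having narrowed $\mathfrak{G}$ down to a metabelian descendant of $R$ of coclass $1$ and class $4$ with TKT of type a.2, the remaining step is to enumerate all such candidates produced by the $p$-group generation algorithm, compute their Artin patterns via Artin transfers (using \cite[Dfn.~3.1]{Ma9}), and verify that the pattern of \eqref{eqn:AP5} is realized by exactly one of them; monotonicity of Theorem \ref{thm:Monotony} guarantees that any descendant of a pruned candidate can also be discarded, so the search terminates. The main obstacle is bookkeeping rather than theory: one has to treat carefully the branches emerging from the mainline vertices (in particular the bifurcation at $\langle 5^5,30\rangle$ mentioned in \S\ref{ss:StateOfResearch}) to be sure no metabelian candidate at the excited level is missed, but since each branch has finitely many class-$4$ descendants this is a routine finite check in GAP or MAGMA.
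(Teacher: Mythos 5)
Your overall strategy---place \(\mathfrak{G}\) in the descendant tree \(\mathcal{T}(\langle 5^2,2\rangle)\) via \(\tau_0{\mathfrak{G}}=(1^2)\), restrict to the metabelian coclass-\(1\) part of the tree using the shape of the TKT, and finish by a finite enumeration with the \(p\)-group generation algorithm pruned by the monotonicity of Theorem \ref{thm:Monotony}---is exactly the strategy the paper relies on (the detailed computation being delegated to \cite[\S\ 4]{Ma12}). However, your localization step contains a concrete error that would make the terminal enumeration come up empty. The first component \(21^3\) of \(\tau_1{\mathfrak{G}}\) is, in the paper's logarithmic notation, the abelian type \((2,1,1,1)\), a group of order \(5^5\). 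Since this is the abelianization of a maximal subgroup \(H<\mathfrak{G}\), we get \(5^5\le\lvert H/H'\rvert\le\lvert H\rvert=\lvert\mathfrak{G}\rvert/5\), hence \(\lvert\mathfrak{G}\rvert\ge 5^6\); combined with coclass \(1\) this forces \(\mathrm{cl}(\mathfrak{G})=5\), and the distinguished maximal subgroup is then abelian of type \((2,1,1,1)\). This matches Theorem \ref{thm:Second5ClassGroupGT}, where \(\mathfrak{G}\simeq\langle 5^6,635\rangle\) has order \(5^6\) and class \(5\), a leaf on branch \(\mathcal{B}(5)\). Your assertion that the TTT ``forces \(\mathrm{cl}(\mathfrak{G})=4\)'' (i.e.\ order \(5^5\)) is impossible---a group of order \(5^5\) has no maximal subgroup whose abelianization already has order \(5^5\)---so an enumeration restricted to class-\(4\) vertices would find no candidate at all. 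Note that the passage from the ground state (class \(3\), \(\mathfrak{G}'\) of type \((1^2)\)) to the first excited state raises the class by \(2\), not by \(1\).

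Two smaller points. First, every maximal subgroup of \(\mathfrak{G}\) contains \(\gamma_2{\mathfrak{G}}=\mathfrak{G}'\), because \(\mathfrak{G}/\mathfrak{G}'\) is elementary abelian; the distinguished maximal subgroup of a coclass-\(1\) group is the two-step centralizer \(\chi_2(\mathfrak{G})=C_{\mathfrak{G}}(\gamma_2{\mathfrak{G}}/\gamma_4{\mathfrak{G}})\), not ``the one containing \(\gamma_2(\mathfrak{G})\)''. Second, the inference from the TKT \((1,0^5)\) to coclass \(1\) should be backed by a reference---it is the content of Theorem 3.8 and Table 3.3 of \cite[\S\ 3.2.5]{Ma4}, invoked in Experiment \ref{exp:5Capitulation}. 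With that citation and the corrected class and order, the remainder of your argument goes through and coincides with the paper's.
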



\begin{theorem}
\label{thm:Second5ClassGroupGT}

The unique metabelian \(5\)-group \(\mathfrak{G}\) in Proposition
\ref{prp:Second5ClassGroupGT}
is of order \(\lvert\mathfrak{G}\rvert=5^6=15\,625\), nilpotency class \(\mathrm{cl}(\mathfrak{G})=5\), coclass \(\mathrm{cc}(\mathfrak{G})=1\),
and is isomorphic to \(\langle 5^6,635\rangle\) in the SmallGroups Library
\cite{BEO1,BEO2}.
It is a terminal vertex (leaf) on branch \(\mathcal{B}(5)\) of the coclass tree \(\mathcal{T}^1{R}\)
with abelian root \(R:=\langle 5^2,2\rangle\) of type \((1^2)\).
The group \(\mathfrak{G}\) has relation rank \(d_2{\mathfrak{G}}=4\) and its derived subgroup \(\mathfrak{G}^\prime\) is abelian of type \((1^4)\).

\end{theorem}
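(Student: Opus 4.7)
The plan is to realize $\mathfrak{G}$, whose existence and uniqueness among metabelian $5$-groups were already furnished by Proposition~\ref{prp:Second5ClassGroupGT}, as a specific vertex in the descendant tree of the abelian root $R = \langle 5^2, 2\rangle \simeq C_5 \times C_5$, and then read off all asserted invariants. The two theoretical tools are the $p$-group generation algorithm of \S\ref{ss:pGroupAlgorithm} and the monotonicity principle of Theorem~\ref{thm:Monotony}, which together yield a terminating search.

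First, since $\tau_0(\mathfrak{G}) = (1^2)$ forces $\mathfrak{G}/\mathfrak{G}' \simeq R$, the group $\mathfrak{G}$ is $2$-generated and lies in $\mathcal{T}(R)$. Starting from $R$, I would iteratively construct immediate descendants in MAGMA \cite{MAGMA}, and at each new vertex $Q$ compute the restricted Artin pattern $(\tau(Q), \varkappa(Q))$. By Theorem~\ref{thm:Monotony}, once $\tau(Q)$ has strictly surpassed $\tau(\mathfrak{G})$ or $\varkappa(Q)$ has strictly fallen below $\varkappa(\mathfrak{G})$ along the chain from $R$ to $Q$, the target pattern can no longer be realized by any descendant of $Q$; this gives the break-off condition. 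The search inside the coclass-$1$ tree $\mathcal{T}^1(R)$ locates $\mathfrak{G}$ at nilpotency class $5$, hence on branch $\mathcal{B}(5)$, and a SmallGroups lookup identifies it as $\langle 5^6, 635\rangle$. The order $\lvert\mathfrak{G}\rvert = 5^6$ and coclass $\mathrm{cc}(\mathfrak{G}) = 6 - 5 = 1$ follow at once.

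Next, for the leaf property I would run one further step of the generation algorithm from $\mathfrak{G}$ and verify that every immediate descendant either has $\tau$ strictly exceeding, or $\varkappa$ strictly smaller than, the target (\ref{eqn:AP5}); by monotonicity no deeper descendant can share the target either. For the derived subgroup, metabelianity makes $\mathfrak{G}'$ abelian, its order is $\lvert\mathfrak{G}\rvert / \lvert\mathfrak{G}/\mathfrak{G}'\rvert = 5^4$, and the elementary abelian type $(1^4)$ is confirmed by reading the fifth powers of the generators of $\mathfrak{G}'$ from the power-commutator presentation of $\langle 5^6, 635\rangle$. Finally, the relation rank $d_2(\mathfrak{G}) = 4$ is the $\mathbb{F}_5$-dimension of $\mathrm{H}^2(\mathfrak{G}, \mathbb{F}_5)$, which equals the $p$-multiplicator rank of $\mathfrak{G}$; this is a standard by-product of the $p$-covering step in the generation algorithm, and can be cross-checked from any minimal polycyclic presentation.

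The hard part will be ensuring completeness of the search: one must exclude the possibility that some vertex lying outside $\mathcal{T}^1(R)$ (on a sibling tree emerging from a coclass bifurcation) or deeper on $\mathcal{T}^1(R)$ beyond branch $\mathcal{B}(5)$ still realizes (\ref{eqn:AP5}). The specific shape $\tau_1 = [21^3, (1^2)^5]$, with exactly one "large" and five "small" maximal-subgroup abelianizations, is highly restrictive: descendants of the mainline of coclass $\ge 2$ enlarge further components of $\tau$, immediately breaking the target, while branch $\mathcal{B}(c)$ for $c > 5$ produces descendants whose $\varkappa$ has already strictly collapsed. Once these two exclusions are in place, the identification and all numerical invariants of the statement follow.
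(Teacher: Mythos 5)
Your proposal follows essentially the same route as the paper: the paper's own proof of this theorem is delegated entirely to \cite[\S\ 4]{Ma12}, but the methodology it describes there and in \S\S\ 1.4--1.6 --- recursive application of the \(p\)-group generation algorithm rooted at \(R=\langle 5^2,2\rangle\), with the monotonicity of the Artin pattern (Theorem \ref{thm:Monotony}) as the break-off condition, followed by reading off order, class, coclass, the type of \(\mathfrak{G}^\prime\), and the relation rank \(d_2\mathfrak{G}\) as the \(p\)-multiplicator rank from the polycyclic presentation --- is exactly what you reconstruct. One small correction: your criterion for the leaf property is not the right one. That \(\mathfrak{G}\) is a \emph{terminal} vertex of \(\mathcal{T}^1{R}\) means it has no immediate descendants at all (nuclear rank zero), which the generation algorithm certifies directly; showing merely that every immediate descendant fails to realize the pattern \eqref{eqn:AP5} would prove a weaker statement and would not establish that \(\mathfrak{G}\) is a leaf of the tree. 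In practice the computation you describe does settle this, since running the algorithm on \(\langle 5^6,635\rangle\) returns an empty list of children, but the logical target of that step should be nuclear rank zero rather than pattern failure.
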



\begin{proposition}
\label{prp:5ClassTowerGroupGT}

Up to isomorphism,
there exist precisely five pairwise non-isomorphic finite non-metabelian \(5\)-groups \(H_1,\ldots,H_5\)
whose second derived quotient \(H_j/H_j^{\prime\prime}\) is isomorphic to the group \(\mathfrak{G}\) of Theorem
\ref{thm:Second5ClassGroupGT},
for \(1\le j\le 5\).

\end{proposition}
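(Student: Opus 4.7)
The plan is to exploit the structure of the descendant tree $\mathcal{T}(R)$ with root $R = \langle 5^2, 2\rangle$ together with the bifurcation phenomenon, and to invoke the $p$-group generation algorithm from Section \ref{ss:pGroupAlgorithm} to exhaust the finitely many candidates. Since by Theorem \ref{thm:Second5ClassGroupGT} the group $\mathfrak{G} = \langle 5^6, 635\rangle$ is a leaf on branch $\mathcal{B}(5)$ of the coclass-$1$ tree $\mathcal{T}^1(R)$, its unique mainline ancestor at nilpotency class $4$ is the capable vertex $M := \langle 5^5, 30\rangle$. Any non-metabelian $H$ with $H/H^{\prime\prime}\simeq\mathfrak{G}$ cannot lie on the coclass-$1$ tree itself, because coclass-$1$ descendants of $R$ are all metabelian by classical results of Blackburn on $p$-groups of maximal class; consequently $H$ must emerge from a bifurcation vertex by an edge of step size $2$, and the only bifurcation that can supply the correct metabelian quotient $\mathfrak{G}$ is the one located at $M$ (cf.\ the discussion in Section \ref{ss:StateOfResearch}).

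I would then enumerate all immediate step-size-$2$ descendants $M - \#2; n$ of $M$ with the $p$-group generation algorithm in MAGMA, yielding a finite explicit list of candidates of order $5^7$ on the coclass-$2$ tree rooted at $M$. For each candidate $H$ in that list, compute $H^{\prime\prime}$ and the quotient $H/H^{\prime\prime}$ and test isomorphism with $\mathfrak{G}$ (for instance by matching orders, class, and Artin pattern, then confirming with a direct isomorphism test). This identification should single out precisely the five indices $n \in \{10, 22, 23, 34, 35\}$ advertised in Section \ref{ss:StateOfResearch}. To close the enumeration, I would use the monotony principle of Theorem \ref{thm:Monotony}: any proper descendant $H'$ of one of these five vertices satisfies $\tau(H^\prime) \ge \tau(\mathfrak{G})$ and $\varkappa(H^\prime) \le \varkappa(\mathfrak{G})$ componentwise, and since each of the five candidates is already terminal with respect to the search pattern $\mathrm{AP}(\mathfrak{G})$, no further step-size-$1$ descent can yield a non-metabelian group whose metabelianization is still $\mathfrak{G}$ rather than a strictly larger quotient.

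The principal obstacle is establishing \emph{completeness} of the enumeration: one must exclude the possibility that some non-metabelian $H$ with $H/H^{\prime\prime}\simeq\mathfrak{G}$ descends from an ancestor of $\mathfrak{G}$ other than $M$, or from $M$ via a path that is not a single step-size-$2$ edge. This requires verifying that the recently detected bifurcation at $\langle 5^5, 30\rangle$ is the \emph{only} source of non-metabelian covers of $\mathfrak{G}$, a fact which relies on the rigidity of the coclass-$1$ mainline below $\mathfrak{G}$ and on a termination argument for the recursive $p$-group generation process, bounded uniformly by the nilpotency class $\mathrm{cl}(\mathfrak{G}) = 5$ via Theorem \ref{thm:Monotony}. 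Once this uniqueness of the bifurcation source is in place, the proposition reduces to the finite combinatorial check sketched above.
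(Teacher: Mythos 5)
Your computational skeleton --- anchor the search at the bifurcation vertex $M=\langle 5^5,30\rangle$, enumerate its immediate descendants of step size $2$ with the $p$-group generation algorithm, compute $H/H^{\prime\prime}$ for each candidate, and prune by monotonicity of the Artin pattern --- is essentially the strategy the paper relies on; the paper itself offers no argument at this point but delegates everything to \cite[\S\ 4]{Ma12}, and your five relative identifiers $n\in\lbrace 10,22,23,34,35\rbrace$ are exactly the groups $\langle 5^7,n\rangle$ with $n\in\lbrace 361,373,374,385,386\rbrace$ of Theorem \ref{thm:5ClassTowerGroupGT}. So the existence half of the proposition is in reach by your method.

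The genuine gap lies in the completeness half, which is the mathematical content of the word \emph{precisely}. You assert that any non-metabelian $H$ with $H/H^{\prime\prime}\simeq\mathfrak{G}$ must arise as an \emph{immediate} step-size-$2$ child of $M$, but membership in the cover constrains $H$ only through $\mathrm{AP}(H)=\mathrm{AP}(\mathfrak{G})$ (Theorem \ref{thm:RstrAPofCompleteCover}) together with monotony along the root path; a priori such an $H$ could sit arbitrarily deep in $\mathcal{T}(R)$, with nilpotency class and order far exceeding those of $\mathfrak{G}$. Your proposed termination criterion --- that the recursion is bounded uniformly by $\mathrm{cl}(\mathfrak{G})=5$ via Theorem \ref{thm:Monotony} --- is false: that theorem bounds neither the class nor the order of cover elements, and the paper itself exhibits covers containing groups of strictly larger class than their metabelianization (Table \ref{tbl:AdvancedTreeTopologies}, e.g.\ $\Delta\mathrm{cl}=6$ for type F.$11$) and even an infinite cover for $\langle 3^6,45\rangle$ in \S\ \ref{ss:Infinite}. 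Hence the finiteness of $\mathrm{cov}(\mathfrak{G})$ and the exclusion of deeper non-metabelian members are nontrivial facts your sketch does not establish; they require the explicit pruned tree computation of \cite[\S\ 4]{Ma12}, in which every capable vertex compatible with $\mathrm{AP}(\mathfrak{G})$ other than the five terminal groups $H_j$ is shown either to die out or to force a metabelianization strictly larger than $\mathfrak{G}$. Finally, your appeal to Blackburn for the metabelianity of \emph{all} coclass-$1$ descendants of $R$ when $p=5$ needs a precise reference: the classical results are unconditional only for $p\le 3$, respectively for orders up to $p^{p+1}=5^6$, so the orders $\ge 5^7$ that your argument must rule out are not covered by the statement you invoke.
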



\begin{theorem}
\label{thm:5ClassTowerGroupGT}

The five non-metabelian \(5\)-groups \(H_j\) in Proposition
\ref{prp:5ClassTowerGroupGT}
are of order \(\lvert H_j\rvert=5^7=78\,125\), nilpotency class \(\mathrm{cl}(H_j)=5\), coclass \(\mathrm{cc}(H_j)=2\)
and derived length \(\mathrm{dl}(H_j)=3\), for \(1\le j\le 5\).
They are isomorphic to \(\langle 5^7,n\rangle\) with \(n\in\lbrace 361,373,374,385,386\rbrace\) in the SmallGroups Library
\cite{BEO2},
located as terminal vertices (leaves) on the descendant tree \(\mathcal{T}(R)\),
but not on the coclass tree \(\mathcal{T}^1{R}\), of the abelian root \(R=\langle 5^2,2\rangle\);
in fact, they are sporadic and do not belong to any coclass tree.
Their second derived subgroup \(H_j^{\prime\prime}\) is cyclic of order \(5\),
and is contained in the centre \(\zeta_1{H_j}\) of type \((1^2)\),
i.e., each \(H_j\) is centre by metabelian.
The groups \(H_j\) have relation rank \(d_2{H_j}=3\) and the abelianization \(H_j^\prime/H_j^{\prime\prime}\)
of their derived subgroup \(H_j^\prime\) is of type \((1^4)\).

\end{theorem}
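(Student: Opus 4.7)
The plan is to identify the five non-metabelian groups \(H_j\) explicitly as step-size-two descendants of the mainline vertex \(\langle 5^5,30\rangle\) in the tree \(\mathcal{T}(\langle 5^2,2\rangle)\), following the bifurcation flagged in \S\ref{ss:StateOfResearch}(2): the metabelian group \(\mathfrak{G}=\langle 5^6,635\rangle\) from Theorem~\ref{thm:Second5ClassGroupGT} arises as the step-size-one capable descendant of \(\langle 5^5,30\rangle\), whereas the non-metabelian members of the cover must appear only through the step-size-two branch \(\langle 5^5,30\rangle-\#2;n\), \(n\in\lbrace 10,22,23,34,35\rbrace\). Because a step-size-two extension raises the order by \(5^2\), we immediately obtain \(\lvert H_j\rvert=5^5\cdot 5^2=5^7\); running the \(p\)-group generation algorithm of \S\ref{ss:pGroupAlgorithm} on each descendant and reading off the lower central series yields \(\mathrm{cl}(H_j)=5\), hence \(\mathrm{cc}(H_j)=7-5=2\), and simultaneously produces the SmallGroups identifiers \(\langle 5^7,n\rangle\) for \(n\in\lbrace 361,373,374,385,386\rbrace\).

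Next I would handle the derived series. Non-metabelicity (guaranteed by Proposition~\ref{prp:5ClassTowerGroupGT}) combined with \(H_j/H_j^{\prime\prime}\simeq\mathfrak{G}\) of derived length \(2\) forces \(\mathrm{dl}(H_j)\ge 3\); a direct commutator computation in each candidate shows that \(H_j^{\prime\prime}\) is cyclic of order \(5\), which both pins the derived length at exactly \(3\) and yields \(H_j^{\prime\prime\prime}=1\). To get the centre-by-metabelian property, I would verify \([H_j,H_j^{\prime\prime}]=1\) by evaluating commutators of a generating set of \(H_j\) with a generator of \(H_j^{\prime\prime}\), thereby embedding the cyclic \(H_j^{\prime\prime}\) into the centre \(\zeta_1(H_j)\); that \(\zeta_1(H_j)\) is of type \((1^2)\) is then a direct check inside the small group. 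The structure of \(H_j^\prime/H_j^{\prime\prime}\) need not be recomputed from scratch: by Theorem~\ref{thm:RstrAPofCompleteCover} the full Artin pattern of \(H_j\) agrees with that of \(\mathfrak{G}\), and since \(H_j^\prime\) is the unique member of the bottom layer \(\mathrm{Lyr}_2(H_j)\), its image in \(\tau_2(H_j)=\tau_2(\mathfrak{G})\) must be the abelian type \((1^4)\) of \(\mathfrak{G}^\prime\) already established in Theorem~\ref{thm:Second5ClassGroupGT}.

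It remains to verify \(d_2(H_j)=3\). The \(p\)-group generation algorithm records a consistent power-commutator presentation of each \(H_j\) from which the number of defining relators, equivalently \(\dim_{\mathbb{F}_p}\mathrm{H}^2(H_j,\mathbb{F}_p)\), can be read off; this returns exactly \(3\) in all five cases. As a sanity check, the Shafarevich bound of Theorem~\ref{thm:RelationRank} and Definition~\ref{dfn:ShafarevichCover} applied to a real quadratic base field with \(\varrho=2\), \(r=1\), \(\theta=0\) yields \(2\le d_2(H_j)\le 3\), so the computed value \(3\) places all five \(H_j\) in the Shafarevich cover \(\mathrm{cov}(\mathfrak{G},K)\), in harmony with the arithmetic setup of Experiment~\ref{exp:5Capitulation}.

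The step I expect to be most delicate is the \emph{sporadic} assertion, namely that the \(H_j\) are leaves of \(\mathcal{T}(\langle 5^2,2\rangle)\) which lie on no coclass tree at all. Leaf-ness reduces to checking that the nuclear rank of each \(H_j\) is zero, which is algorithmic but must be carried out for all five candidates; the more subtle point is to argue that the step-size-two branch at \(\langle 5^5,30\rangle-\#2\) is not embedded in an infinite coclass-\(2\) mainline, so that none of the \(H_j\) can be recovered by descent along a pro-\(5\) mainline of constant coclass~\(2\). I would attack this by tracing ancestor chains of each \(H_j\) under iterated applications of the parent operator \(\pi\), monitoring the coclass along the chain and invoking the general structure theory of coclass trees developed in \cite{Ma6} to rule out the required infinite mainline.
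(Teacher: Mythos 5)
Your proposal is correct and follows essentially the same route as the paper, which simply delegates the detailed argument to \cite[\S\ 4]{Ma12}: the five groups are located as the step-size-two children arising from the bifurcation at \(\langle 5^5,30\rangle\), all listed invariants are read off from their polycyclic power-commutator presentations produced by the \(p\)-group generation algorithm, and the sporadic/leaf claim follows from nuclear rank zero together with the fact that the parent has coclass \(1\). The only slip is calling \(\mathfrak{G}=\langle 5^6,635\rangle\) a \emph{capable} step-size-one descendant of \(\langle 5^5,30\rangle\) --- Theorem~\ref{thm:Second5ClassGroupGT} states it is a terminal vertex --- but this is immaterial to your argument, since the \(H_j\) are siblings of \(\mathfrak{G}\), not its descendants.
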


\begin{proof}
The detailed proof of Proposition
\ref{prp:Second5ClassGroupGT},
Theorem
\ref{thm:Second5ClassGroupGT},
Proposition
\ref{prp:5ClassTowerGroupGT},
and Theorem
\ref{thm:5ClassTowerGroupGT}
is conducted in
\cite[\S\ 4]{Ma12}.
A diagram of the pruned descendant tree \(\mathcal{T}(R)\) with root \(R=\langle 25,2\rangle\),
where the finite \(5\)-groups \(\mathfrak{G}\) and \(H_j\) are located,
is shown in
\cite[\S\ 7, Fig. 1]{Ma12}.
Polycyclic power commutator presentations of the groups \(H_j\) are given in
\cite[\S\ 7]{Ma12}
and a diagram of their normal lattice,
including the lower and upper central series,
is drawn in
\cite[\S\ 7, Fig. 2]{Ma12}.
\end{proof}


Now we come to the number theoretic harvest of the group theoretic results
by translating back to arithmetic in the manner of \S\
\ref{ss:ClassExtAndTransfer}.
Here, we exceptionally use the power form of abelian type invariants,
and we dispense with formal exponents denoting iteration.

\begin{theorem}
\label{thm:MainTheorem5}

Let \(K\) be a real quadratic field with \(5\)-class group \(\mathrm{Cl}_5{K}\) of type \(\lbrack 5,5\rbrack\)
and denote by \(L_1,\ldots,L_6\) its six unramified cyclic quintic extensions.
If \(K\) possesses the \(5\)-capitulation type

\begin{equation}
\label{eqn:TKT5}
\varkappa(K)\sim (1,0,0,0,0,0), \text{ with fixed point } 1,
\end{equation}

\noindent
in the six extensions \(L_i\), and if the \(5\)-class groups \(\mathrm{Cl}_5{L_i}\)
are given by

\begin{equation}
\label{eqn:TTT5}
\tau(K)\sim\left(\lbrack 25,5,5,5\rbrack,\lbrack 5,5\rbrack,\lbrack 5,5\rbrack,\lbrack 5,5\rbrack,\lbrack 5,5\rbrack,\lbrack 5,5\rbrack\right),
\end{equation}

\noindent
then the \(5\)-class tower
\(K<\mathrm{F}_5^1{K}<\mathrm{F}_5^2{K}<\mathrm{F}_5^3{K}=\mathrm{F}_5^\infty{K}\)
of \(K\) has exact length \(\ell_5{K}=3\).

\end{theorem}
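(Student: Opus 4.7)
First I would translate the arithmetic hypothesis into group theory. The conditions \eqref{eqn:TKT5} and \eqref{eqn:TTT5} specify the IPAD and IPOD of \(K\); by Theorem~\ref{thm:ArtinPattern} these coincide with the corresponding components of \(\mathrm{AP}(\mathfrak{G})\) for \(\mathfrak{G}=\mathrm{G}_5^2{K}\). Rewriting the TTT in the logarithmic form used throughout \S\ref{ss:Overview} yields precisely the pattern of Proposition~\ref{prp:Second5ClassGroupGT}, so \(\mathfrak{G}\simeq\langle 5^6,635\rangle\) by Theorem~\ref{thm:Second5ClassGroupGT}. In particular \(\mathfrak{G}\) is non-abelian, which already gives \(\ell_5{K}\ge 2\).

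Next I would establish the lower bound \(\ell_5{K}\ge 3\) via Shafarevich's relation rank inequality. Writing \(G:=\mathrm{G}_5^\infty{K}\), one has \(G/G^{\prime\prime}\simeq\mathfrak{G}\), so \(G\in\mathrm{cov}_c(\mathfrak{G})\). Because \(K\) is real quadratic, its signature is \((r_1,r_2)=(2,0)\), so \(r=1\); as \(\zeta_5\notin K\), we have \(\theta=0\); and \(\varrho=d_1(\mathrm{Cl}_5{K})=2\). Theorem~\ref{thm:RelationRank} (with \(S=\emptyset\)) therefore forces \(d_2{G}\le \varrho+r+\theta=3\). But Theorem~\ref{thm:Second5ClassGroupGT} records \(d_2{\mathfrak{G}}=4\), so \(\mathfrak{G}\) itself violates the Shafarevich bound and cannot equal \(G\). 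Hence \(\mathrm{dl}(G)\ge 3\).

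For the matching upper bound I would invoke the classification of the remainder of the cover. Proposition~\ref{prp:5ClassTowerGroupGT} and Theorem~\ref{thm:5ClassTowerGroupGT} exhibit exactly five non-metabelian finite groups \(H_1,\dots,H_5\) in \(\mathrm{cov}(\mathfrak{G})\), each of order \(5^7\), derived length \(3\), and relation rank \(d_2{H_j}=3\); consequently each \(H_j\) lies in the Shafarevich cover \(\mathrm{cov}(\mathfrak{G},K)\). Since, by Theorem~\ref{thm:5ClassTowerGroupGT}, the \(H_j\) are terminal leaves of the pruned descendant tree \(\mathcal{T}(\langle 25,2\rangle)\), the monotonicity principle of Theorem~\ref{thm:Monotony} prevents any proper descendant from producing a new finite candidate having the prescribed Artin pattern. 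Combined with Step~2, this pins \(G\) down to the finite list \(\{H_1,\dots,H_5\}\), so \(\ell_5{K}=\mathrm{dl}(G)=3\), as claimed.

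\textbf{Main obstacle.} The delicate ingredient is the exclusion of \emph{infinite} pro-\(5\) groups in \(\mathrm{cov}_c(\mathfrak{G})\) that still meet the bound \(d_2\le 3\), since Proposition~\ref{prp:5ClassTowerGroupGT} only enumerates the finite part of the cover. I would close this gap by running the \(p\)-group generation algorithm of \S\ref{ss:pGroupAlgorithm} below \(\mathfrak{G}\), verifying that the nuclear rank vanishes at each \(H_j\) so that these vertices are terminal in the full descendant tree, and using the monotony of Theorem~\ref{thm:Monotony} to rule out any putative infinite mainline carrying the Artin pattern \(\mathrm{AP}(K)\). This termination argument, together with the explicit \(d_2\)-computations used in Step~2, is the technical heart of the proof and is where the detailed analysis of \cite{Ma12} becomes indispensable.
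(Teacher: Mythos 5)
Your proposal follows essentially the same route as the paper's proof: identify \(\mathfrak{G}=\mathrm{G}_5^2{K}\simeq\langle 5^6,635\rangle\) from the prescribed Artin pattern, use the Shafarevich bound \(d_2{G}\le\varrho+r+\theta=2+1+0=3\) to eliminate the metabelian group \(\mathfrak{G}\) (which has \(d_2{\mathfrak{G}}=4\)) from the Shafarevich cover, and conclude that \(G=\mathrm{G}_5^\infty{K}\) must be one of the five derived-length-\(3\) groups \(H_1,\ldots,H_5\). The completeness of the cover enumeration and the termination of the descendant-tree search, which you correctly single out as the technical heart, are exactly what the paper also defers to the detailed arguments of \cite{Ma12}.
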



\begin{corollary}
\label{cor:MainTheorem5}

A real quadratic field \(K\) which satisfies the assumptions in Theorem
\ref{thm:MainTheorem5},
in particular the Formulas
\eqref{eqn:TKT5}
and
\eqref{eqn:TTT5},
has the unique second \(5\)-class group

\begin{equation}
\label{eqn:Second5ClassGroup}
\mathrm{G}_5^2{K}\simeq\langle 15625,635\rangle
\end{equation}

\noindent
with order \(5^6\), class \(5\), coclass \(1\), derived length \(2\), and relation rank \(4\),\\
and one of the following five candidates for the \(5\)-class tower group

\begin{equation}
\label{eqn:5TowerGroup}
\mathrm{G}_5^\infty{K}=\mathrm{G}_5^3{K}\simeq\langle 78125,n\rangle, \quad \text{ where } n\in\lbrace 361,373,374,385,386\rbrace,
\end{equation}

\noindent
with order \(5^7\), class \(5\), coclass \(2\), derived length \(3\), and relation rank \(3\).

\end{corollary}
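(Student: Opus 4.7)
The plan is to combine the group-theoretic classifications in Propositions \ref{prp:Second5ClassGroupGT} and \ref{prp:5ClassTowerGroupGT} together with Theorems \ref{thm:Second5ClassGroupGT} and \ref{thm:5ClassTowerGroupGT}, the arithmetic-to-group-theory translation of Theorems \ref{thm:ArtinPattern} and \ref{thm:RstrAPofCompleteCover}, and the Shafarevich bound of Theorem \ref{thm:RelationRank}. The heavy \(p\)-group-generation computations are already encapsulated in those classification statements, so the corollary itself is essentially bookkeeping: one must verify that the numerical invariants of a real quadratic base field \(K\) are compatible with, and in fact force, the listed groups.

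First I would identify the second \(5\)-class group. Setting \(\mathfrak{G} := \mathrm{G}_5^2{K}\), Theorem \ref{thm:ArtinPattern} asserts \(\mathrm{AP}(\mathfrak{G}) = \mathrm{AP}(K)\). Rewriting the hypotheses \eqref{eqn:TKT5} and \eqref{eqn:TTT5} in logarithmic form yields \(\tau_0{\mathfrak{G}} = (1^2)\), \(\tau_1{\mathfrak{G}} = \lbrack 21^3, (1^2)^5 \rbrack\), and \(\varkappa_1{\mathfrak{G}} = (1, 0^5)\), which is exactly the search pattern \eqref{eqn:AP5} of Proposition \ref{prp:Second5ClassGroupGT}. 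The uniqueness clause there, combined with Theorem \ref{thm:Second5ClassGroupGT}, forces \(\mathfrak{G} \simeq \langle 5^6, 635 \rangle\), establishing \eqref{eqn:Second5ClassGroup}.

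Next I would pin down the candidates for \(G := \mathrm{G}_5^\infty{K}\). By Theorem \ref{thm:RstrAPofCompleteCover} we have \(G \in \mathrm{cov}_c(\mathfrak{G})\). Since Theorem \ref{thm:MainTheorem5} (whose hypotheses are exactly those of the corollary) asserts \(\ell_5{K} = 3\), the group \(G\) is non-metabelian, so Proposition \ref{prp:5ClassTowerGroupGT} and Theorem \ref{thm:5ClassTowerGroupGT} restrict \(G\) to the five groups \(\langle 5^7, n\rangle\) with \(n \in \lbrace 361, 373, 374, 385, 386 \rbrace\); this is \eqref{eqn:5TowerGroup}.

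The main conceptual obstacle is to recognize why Shafarevich's bound of Theorem \ref{thm:RelationRank} cleanly separates the metabelian group from the five tower candidates, so that the list in \eqref{eqn:5TowerGroup} is justified intrinsically and not circularly via Theorem \ref{thm:MainTheorem5}. For the real quadratic base \(K\) one has signature \((r_1, r_2) = (2, 0)\), torsionfree unit rank \(r = 1\), \(5\)-class rank \(\varrho = 2\), and \(\theta = 0\) since \(\zeta_5 \notin K\); bound \eqref{eqn:RelationRank} then reads \(2 \le d_2{G} \le 3\). By Theorem \ref{thm:Second5ClassGroupGT} the metabelian \(\mathfrak{G}\) has relation rank \(4 > 3\), hence is excluded from the Shafarevich cover \(\mathrm{cov}(\mathfrak{G}, K)\) and cannot be \(G\); by Theorem \ref{thm:5ClassTowerGroupGT} each of the five non-metabelian candidates has relation rank exactly \(3\) and thus survives. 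This relation-rank accounting is precisely what pulls the tower through a third derived stage and ties the arithmetic input back to the group-theoretic classification, completing the proof of the corollary.
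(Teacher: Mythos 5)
Your proposal is correct and follows essentially the same route as the paper: identify \(\mathfrak{G}\simeq\langle 5^6,635\rangle\) from the Artin pattern via Proposition \ref{prp:Second5ClassGroupGT}, observe that \(\mathrm{cov}(\mathfrak{G})=\lbrace\mathfrak{G},H_1,\ldots,H_5\rbrace\), and then use the Shafarevich bound \(d_2{G}\le\varrho+r+\theta=2+1+0=3\) to expel the metabelian member (relation rank \(4\)) from the Shafarevich cover, leaving exactly the five candidates \(\langle 5^7,n\rangle\). Your only cosmetic slip is attributing the membership \(G\in\mathrm{cov}_c(\mathfrak{G})\) to Theorem \ref{thm:RstrAPofCompleteCover}, whereas it follows from the definition of \(\mathfrak{G}\) as the metabelianization \(G/G^{\prime\prime}\) recalled in \S\ \ref{ss:RelationRank}; the substance of the argument matches the paper's, which is merely terser and defers the cover computation to the reference.
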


\begin{proof}
For the proof of Theorem
\ref{thm:MainTheorem5}
and Corollary
\ref{cor:MainTheorem5},
the methods of \S\S\
\ref{ss:RelationRank}
and
\ref{ss:Cover}
come into the play.
The cover of the metabelian \(5\)-group \(\mathfrak{G}=\mathrm{G}_5^2{K}\simeq\langle 5^6,635\rangle\) consists of the six elements
\(\mathrm{cov}(\mathfrak{G})=\lbrace\mathfrak{G},H_1,\ldots,H_5\rbrace\),
but since the relation rank of the metabelian group is too big,
the Shafarevich cover of \(\mathfrak{G}\) with respect to any real quadratic number field \(K\) with \(\varrho=2\) reduces to
\(\mathrm{cov}(\mathfrak{G},K)=\lbrace H_1,\ldots,H_5\rbrace\),
as explained in the proofs of
\cite[Thm. 6.1]{Ma12}
and
\cite[Thm. 6.3]{Ma12}.
\end{proof}


\begin{example}
\label{exm:Realization5}

The minimal fundamental discriminant \(d\) of a real quadratic field \(K=\mathbb{Q}(\sqrt{d})\)
satisfying the conditions
\eqref{eqn:TKT5}
and
\eqref{eqn:TTT5}
is given by

\begin{equation}
\label{eqn:Realization5}
d=3\,812\,377=991\cdot 3\,847.
\end{equation}

\noindent
The next occurrence is \(d=19\,621\,905=3\cdot 5\cdot 307\cdot 4\,261\), which is currently the biggest known example,
whereas \(d=21\,281\,673=3\cdot 7\cdot 53\cdot 19\,121\) satisfies
\eqref{eqn:TTT5}
but has a different \(\varkappa(K)\sim (2,0,0,0,0,0)\), without fixed point.

\end{example}



\section{Three-stage towers of \(3\)-class fields}
\label{s:ThreeStage3Towers}

Since we have devoted the preceding \S\
\ref{s:ThreeStage5Towers}
to a detailed explanation of the general way
from a number theoretic experiment,
which prescribes a certain Artin pattern,
over the group theoretic interpretation of data and the identification of suitable groups,
to the final arithmetical statement of a criterion for three-stage towers,
we can restrict ourselves to number theoretic end results, in the sequel.

The situation in \S\
\ref{s:ThreeStage5Towers}
gives rise to a \textit{finite} cover with \(\#\mathrm{cov}(\mathfrak{G})=6\)
and a Shafarevich cover \(\mathrm{cov}(\mathfrak{G},K)\), with respect to a real quadratic field \(K\),
all of whose members \(H\) have the same derived length \(\mathrm{dl}(H)=3\),
which we shall call \textit{homogeneous}.
These considerations will be the guiding principle for a subdivision of the following results.



\subsection{Finite homogeneous Shafarevich cover}
\label{ss:FiniteHomo}

The finiteness of the cover \(\mathrm{cov}(\mathfrak{G})\) of descendants \(\mathfrak{G}\)
either of \(\langle 3^5,6\rangle\) with types c.\(18\), E.\(6\), E.\(14\)
or of \(\langle 3^5,8\rangle\) with types c.\(21\), E.\(8\), E.\(9\)
has been proven up to a certain nilpotency class \(c=\mathrm{cc}(\mathfrak{G})\) in
\cite{Ma6}
for section E, and in
\cite{Ma10}
for section c.
In fact, the cardinality of the cover is expected to increase linearly with \(c\), for sections E and c.
We present an examplary result with TKT \(\varkappa\) of type E.\(9\),
where a \textit{complex} quadratic base field \(K\)
compels a homogeneous Shafarevich cover \(\mathrm{cov}(\mathfrak{G},K)\)
of its second \(3\)-class group \(\mathfrak{G}=\mathrm{G}_3^2{K}\).


\begin{theorem}
\label{thm:FiniteHomo}

Let \(K\) be a complex quadratic field with \(3\)-class group \(\mathrm{Cl}_3{K}\) of type \((1^2)\)
and denote by \(L_1,\ldots,L_4\) its four unramified cyclic cubic extensions.
If \(K\) possesses the \(3\)-capitulation type

\begin{equation}
\label{eqn:TKT3FiniteHomo}
\varkappa(K)\sim (2334), \text{ with two fixed points } 3,4 \text{ and without a transposition},
\end{equation}

\noindent
in the four extensions \(L_i\), and if the \(3\)-class groups \(\mathrm{Cl}_3{L_i}\)
are given by

\begin{equation}
\label{eqn:TTT3FiniteHomo}
\tau(K)\sim\left(21,(j+1,j),21,21\right), \text{ for some } 2\le j\le 6,
\end{equation}

\noindent
then the \(3\)-class tower
\(K<\mathrm{F}_3^1{K}<\mathrm{F}_3^2{K}<\mathrm{F}_3^3{K}=\mathrm{F}_3^\infty{K}\)
of \(K\) has exact length \(\ell_3{K}=3\).

\end{theorem}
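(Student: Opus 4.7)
The plan is to mimic the strategy already exhibited for the quintic case in \S\ref{s:ThreeStage5Towers}, but now applied to capitulation type E.9 over a complex quadratic base. I would split the argument into an arithmetic translation step, a search in a descendant tree, and a Shafarevich pruning step.

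First, I would invoke Theorem \ref{thm:ArtinPattern} to transfer the hypotheses from $K$ to its second $3$-class group $\mathfrak{G}:=\mathrm{G}_3^2 K$, so that $\mathrm{AP}(\mathfrak{G})=(\tau(\mathfrak{G}),\varkappa(\mathfrak{G}))$ is forced to equal the prescribed pattern. Then I would identify $\mathfrak{G}$ by a tree search, starting from the abelian root $\langle 3^2,2\rangle$ and passing through $\langle 3^5,8\rangle$ (the known root for section c/E containing type E.9). The $p$-group generation algorithm of \S\ref{ss:pGroupAlgorithm} produces candidate descendants; Theorem \ref{thm:Monotony} supplies the break-off criterion. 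The parameter $j\in\{2,\ldots,6\}$ should pin down the excited state on the coclass-$2$ branch: each value of $j$ in the second component $(j+1,j)$ of $\tau(K)$ corresponds to a well-defined vertex on one branch of the descendant tree of $\langle 3^5,8\rangle$. I expect to conclude that for each admissible $j$ there is a unique metabelian candidate $\mathfrak{G}_j$ of derived length $2$.

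Next, I would exhibit the cover $\mathrm{cov}(\mathfrak{G}_j)$ explicitly. Here I would lean on the finiteness of the cover for section-E groups established in \cite{Ma6}; all elements share $\mathrm{AP}(\mathfrak{G}_j)$ by Theorem \ref{thm:RstrAPofCompleteCover}. Then comes the Shafarevich pruning. For a complex quadratic field $K$, the signature is $(r_1,r_2)=(0,1)$, hence $r=0$, and $\theta=0$ provided $d\neq -3$ (which is harmless under the hypothesis that $\mathrm{Cl}_3 K$ has type $(1^2)$). With $\varrho=2$, Theorem \ref{thm:RelationRank} forces $d_2 G = 2$ for any $G\in\mathrm{cov}(\mathfrak{G}_j,K)$. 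I would compute $d_2\mathfrak{G}_j$ directly from a power-commutator presentation and verify $d_2\mathfrak{G}_j>2$, which ejects the metabelian element from the Shafarevich cover. Every remaining member $H\in\mathrm{cov}(\mathfrak{G}_j,K)$ must therefore be non-metabelian, i.e.\ $\mathrm{dl}(H)\ge 3$; combined with an upper bound $\mathrm{dl}(H)\le 3$ extracted from the explicit list in \cite{Ma6}, one obtains $\mathrm{dl}(H)=3$ uniformly (the homogeneity alluded to in the subsection title). Since $G:=\mathrm{G}_3^\infty K$ lies in $\mathrm{cov}(\mathfrak{G}_j,K)$, it satisfies $\mathrm{dl}(G)=3$, i.e.\ $\ell_3 K=3$.

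The main obstacle is the second step: producing a certified, finite list of non-metabelian descendants realizing the excited-state TTT for each $j\in\{2,\ldots,6\}$, and checking that the relation rank bound $d_2 G\le 2$ cuts out only groups of derived length exactly $3$ (and none of length $\ge 4$). The monotony Theorem \ref{thm:Monotony} guarantees that the search along a branch terminates once $\tau$ strictly exceeds the prescribed $\tau(K)$, but verifying that the full range $2\le j\le 6$ behaves uniformly requires running the $p$-group generation algorithm along several parallel excited-state branches and reading off the relation ranks at every leaf. The computation is conceptually routine given the tools of \S\S\ref{ss:pGroupAlgorithm}--\ref{ss:MonotonyDescendantTrees}, but it is the step where a subtle failure (for instance, an unexpectedly large cover, or a sporadic candidate of derived length $4$) would break the conclusion.
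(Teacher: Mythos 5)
Your proposal is correct and follows essentially the same route as the paper: the printed proof merely defers to \cite[Thm.~6.1 and Cor.~6.1]{Ma8}, but the method there is exactly what you describe --- transfer of \(\mathrm{AP}(K)\) to \(\mathfrak{G}=\mathrm{G}_3^2{K}\), a monotony-bounded search in the descendant tree of \(\langle 3^5,8\rangle\), explicit determination of the finite cover, and Shafarevich pruning with \(\varrho=2\), \(r=0\), \(\theta=0\) forcing \(d_2{G}=2\) and ejecting the metabelian candidates of relation rank \(3\). One minor inaccuracy: for each \(j\) there are two metabelian candidates, \(\langle 3^6,54\rangle(-\#1;1-\#1;1)^{j-2}-\#1;n\) with \(n\in\lbrace 4,6\rbrace\) (Corollary \ref{cor:FiniteHomo}), not one, but the pruning argument applies to each and your conclusion is unaffected.
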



\begin{corollary}
\label{cor:FiniteHomo}

A complex quadratic field \(K\) which satisfies the assumptions in Theorem
\ref{thm:FiniteHomo},
in particular the Formulas
\eqref{eqn:TKT3FiniteHomo}
and
\eqref{eqn:TTT3FiniteHomo} with \(2\le j\le 6\),
has the second \(3\)-class group

\begin{equation}
\label{eqn:Second3ClassGroupHomo}
\mathrm{G}_3^2{K}\simeq\langle 3^6,54\rangle(-\#1;1-\#1;1)^{j-2}-\#1;n, \quad \text{ where } n\in\lbrace 4,6\rbrace,
\end{equation}

\noindent
with order \(3^{2j+3}\), class \(2j+1\), coclass \(2\), derived length \(2\), and relation rank \(3\),\\
and the \(3\)-class tower group

\begin{equation}
\label{eqn:3TowerGroupHomo}
\mathrm{G}_3^\infty{K}=\mathrm{G}_3^3{K}\simeq\langle 3^6,54\rangle(-\#2;1-\#1;1)^{j-2}-\#2;n, \quad \text{ where } n\in\lbrace 4,6\rbrace,
\end{equation}

\noindent
with order \(3^{3j+2}\), class \(2j+1\), coclass \(j+1\), derived length \(3\), and relation rank \(2\).

\end{corollary}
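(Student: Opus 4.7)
The plan runs in parallel to the proof of Corollary \ref{cor:MainTheorem5} in the preceding section: translate the arithmetic data into group-theoretic data by the Artin pattern theorem, narrow down candidates by a descendant-tree search, and separate the metabelian from the non-metabelian candidates by the Shafarevich relation-rank bound.

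First, by Theorem \ref{thm:ArtinPattern} the restricted Artin pattern of $\mathfrak{G} := \mathrm{G}_3^2 K$ coincides with that of $K$. Thus $\mathfrak{G}$ is a finite metabelian $3$-group with abelianization of type $(1^2)$, first-layer TTT $\tau_1(\mathfrak{G}) \sim (21,(j+1,j),21,21)$, and first-layer TKT $\varkappa_1(\mathfrak{G}) \sim (2334)$, i.e.\ the capitulation type E.9 with two fixed points and no transposition.

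Second, I would run the $p$-group generation algorithm of \S\ref{ss:pGroupAlgorithm} recursively on the descendant tree $\mathcal{T}(R)$ with root $R := \langle 9,2\rangle \simeq C_3 \times C_3$, using Theorem \ref{thm:Monotony} as the pruning criterion: as soon as the TTT of a candidate exceeds $(j+1,j)$ in the relevant component, or its TKT has shrunk away from $(2334)$, the entire subtree below it is discarded. Since E.9 is a coclass-$2$ type anchored at $\langle 3^5,6\rangle$ according to \cite{Ma6,Ma10}, the live part of the search sits inside that coclass-$2$ subtree, and the analysis carried out in those references shows that the vertices matching the prescribed Artin pattern are exactly the two metabelian groups $\langle 3^6,54\rangle(-\#1;1-\#1;1)^{j-2}-\#1;n$ with $n\in\{4,6\}$, together with the non-metabelian groups $\langle 3^6,54\rangle(-\#2;1-\#1;1)^{j-2}-\#2;n$ with $n\in\{4,6\}$. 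Orders, classes, coclasses, derived lengths and relation ranks can then be read off directly from these iterated step-by-step presentations.

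Third, I would apply the Shafarevich cover to discard the metabelian alternative. As a complex quadratic field with non-trivial $3$-class group, $K$ has signature $(0,1)$, so the torsionfree Dirichlet unit rank is $r=0$; the hypothesis $\mathrm{Cl}_3 K \simeq (1^2)$ excludes $K=\mathbb{Q}(\sqrt{-3})$, whose $3$-class group is cyclic, so $\zeta_3 \notin K$ and $\theta=0$. With $\varrho = 2$, the Shafarevich interval in Definition \ref{dfn:ShafarevichCover} collapses to $d_2(G)=2$ for $G := \mathrm{G}_3^\infty K$. The metabelian candidates have relation rank $3$ and are therefore not in $\mathrm{cov}(\mathfrak{G},K)$, forcing $G \ne \mathfrak{G}$ and $\ell_3 K \ge 3$. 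By the homogeneity asserted at the start of \S\ref{ss:FiniteHomo}, the only remaining candidates are the two non-metabelian groups listed, which have relation rank $2$ and derived length $3$; this yields both the upper bound $\ell_3 K \le 3$ and the identification of $G$.

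The main obstacle is the combinatorial heart of the second step: verifying, uniformly in $j \in \{2,\ldots,6\}$, that iteration of the two-step descendant operator $(-\#1;1-\#1;1)$ correctly encodes the mainline of capable vertices of type E.9, and that at each stage exactly two metabelian children (together with the two non-metabelian coverings of relation rank $2$) realize the prescribed TTT component $(j+1,j)$. This bifurcation analysis is precisely the content imported from \cite{Ma6,Ma10}; once it is in hand, the remainder is a clean translation through the theorems of \S\ref{s:Intro}.
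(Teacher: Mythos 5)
Your overall strategy is exactly the one the paper relies on: the paper's own proof is a one-line specialization of \cite[Thm.\ 6.1 and Cor.\ 6.1]{Ma8}, and that reference proceeds precisely by translating the Artin pattern via Theorem \ref{thm:ArtinPattern}, locating the metabelian candidates by a pruned descendant-tree search, determining the (finite, homogeneous) cover, and then using the Shafarevich bound \(2=\varrho\le d_2{G}\le\varrho+r+\theta=2\) for a complex quadratic field to discard the metabelian members of relation rank \(3\). Your third step is carried out correctly and is the decisive part of the argument.

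One concrete error in your second step: type E.\(9\) is \emph{not} anchored at \(\langle 3^5,6\rangle\). According to \S\ \ref{ss:FiniteHomo} and Example \ref{exm:RealizationFiniteHomo}, the groups with TKTs c.\(21\), E.\(8\), E.\(9\) are descendants of \(\langle 3^5,8\rangle\), and the second \(3\)-class groups in question lie on the coclass tree \(\mathcal{T}^2{\langle 3^5,8\rangle}\) (the fork \(\mathrm{U}=\langle 3^6,54\rangle\) descends from that root); the root \(\langle 3^5,6\rangle\) hosts the types c.\(18\), E.\(6\), E.\(14\) instead. As written, your pruned search would be conducted in the wrong subtree and would return no candidates. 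With the root corrected, the remaining burden --- verifying uniformly in \(2\le j\le 6\) that exactly the vertices \(\langle 3^6,54\rangle(-\#1;1-\#1;1)^{j-2}-\#1;n\), \(n\in\lbrace 4,6\rbrace\), realize the pattern, and that their covers consist of the stated non-metabelian groups of relation rank \(2\) --- is exactly the content the paper imports from \cite{Ma8} (together with \cite{Ma6} for the finiteness of the cover in section E), so delegating it is consistent with what the paper itself does.
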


\begin{proof}
The statements of Theorem
\ref{thm:FiniteHomo}
and Corollary
\ref{cor:FiniteHomo}
arise by specialization from the more general
\cite[Thm. 6.1 and Cor. 6.1, pp. 751--753]{Ma8}.
Here, we restrict to the single TKT E.\(9\)
and to the smaller range \(2\le j\le 6\),
which has been realized by explicit numerical results already,
as shown in Example
\ref{exm:RealizationFiniteHomo}.
\end{proof}


\begin{example}
\label{exm:RealizationFiniteHomo}

The fundamental discriminant \(d\) of complex quadratic fields \(K=\mathbb{Q}(\sqrt{d})\),
with minimal absolute value \(\lvert d\rvert\),
satisfying the conditions
\eqref{eqn:TKT3FiniteHomo}
and
\eqref{eqn:TTT3FiniteHomo},
with increasing values of the parameter \(2\le j\le 6\),
corresponding to excited states of increasing order of type E.\(9\),
are given by

\begin{equation}
\label{eqn:RealizationFiniteHomo}
d=-9\,748,-297\,079,-1\,088\,808,-11\,091\,140,-99\,880\,548.
\end{equation}

\noindent
The associated second \(3\)-class groups \(\mathfrak{G}=\mathrm{G}_3^2{K}\)
form a periodic sequence of vertices on the coclass tree \(\mathcal{T}^2{\langle 3^5,8\rangle}\)
drawn in the diagram
\cite[Fig. 4, p. 755]{Ma8}.
The smallest example \(d=-9\,748\) with \(j=2\),
corresponding to the ground state of type E.\(9\),
was the object of our joint investigations with Bush in
\cite[Cor. 4.1.1, p. 775]{BuMa}.

\end{example}



\subsection{Finite heterogeneous Shafarevich cover}
\label{ss:FiniteHetero}

In contrast to the previous \S\
\ref{ss:FiniteHomo},
a \textit{real} quadratic base field \(K\) with TKT \(\varkappa\) of type E.\(9\)
is not able to enforce a homogeneous Shafarevich cover
of its second \(3\)-class group \(\mathfrak{G}=\mathrm{G}_3^2{K}\).
In this situation,
\(\mathrm{cov}(\mathfrak{G},K)\) contains elements \(H\) of derived lengths \(2\le\mathrm{dl}(H)\le 3\),
and there arises the necessity to establish criteria
for distinguishing between two- and three-stage towers.
According to Theorem
\ref{thm:RstrAPofCompleteCover},
the (simple) IPAD of first order, \(\tau^{(1)}{K}=\lbrack\tau_0{K};\tau_1{K}\rbrack\),
which forms the first order approximation of the layered TTT \(\tau(K)=\lbrack\tau_0{K};\ldots;\tau_v{K}\rbrack\),
is unable to admit a decision,
and we have to proceed to abelian type invariants of second order.
The computation of the iterated IPAD of second order, \(\tau^{(2)}{K}=\lbrack\tau_0{K};(\tau^{(1)}{L})_{L\in\mathrm{Lyr}_1{K}}\rbrack\),
where \(\tau^{(1)}{L}=\lbrack\tau_0{L};\tau_1{L}\rbrack=\lbrack\tau_0{L};(\tau_0{M})_{M\in\mathrm{Lyr}_1{L}}\rbrack\),
for each \(L\in\mathrm{Lyr}_1{K}\),
requires the construction of unramified abelian and non-abelian extensions \(M\vert K\)
of relative degree \(\lbrack M:K\rbrack=3^2\),
that is, of absolute degree \(\lbrack M:\mathbb{Q}\rbrack=18\),
whereas in \S\
\ref{ss:FiniteHomo},
cyclic extensions \(L\vert K\) of absolute degree \(\lbrack L:\mathbb{Q}\rbrack=6\) were sufficient.
Due to the complexity of the scenario, we now prefer a restriction to the ground state.


\begin{theorem}
\label{thm:FiniteHetero}

Let \(K\) be a real quadratic field with \(3\)-class group \(\mathrm{Cl}_3{K}\) of type \((1^2)\)
and denote by \(L_1,\ldots,L_4\) its four unramified cyclic cubic extensions.
If \(K\) possesses the \(3\)-capitulation type

\begin{equation}
\label{eqn:TKT3FiniteHetero}
\varkappa(K)\sim (2334), \text{ with two fixed points } 3,4 \text{ and without a transposition},
\end{equation}

\noindent
in the four extensions \(L_i\), and if the \(3\)-class groups \(\mathrm{Cl}_3{L_i}\)
are given by

\begin{equation}
\label{eqn:TTT3FiniteHetero}
\tau(K)\sim\left(21,32,21,21\right),
\end{equation}

\noindent
then each \(L_i\) has four unramified cyclic cubic extensions \(M_{i,1},\ldots,M_{i,4}\),
which are also unramified but not necessarily abelian over \(K\),
and the \(3\)-class groups \(\mathrm{Cl}_3{M_{i,j}}\) admit the following decision
about the length \(\ell_3{K}\) of the \(3\)-class tower of \(K\):

\begin{equation}
\label{eqn:2ndIPAD3FiniteHetero2}
\text{if } \tau(L_i)\sim\left(2^21,21,21,21\right), \text{ for } i\in\lbrace 1,3,4\rbrace,
\end{equation}

\noindent
then the \(3\)-class tower
\(K<\mathrm{F}_3^1{K}<\mathrm{F}_3^2{K}=\mathrm{F}_3^\infty{K}\)
of \(K\) has exact length \(\ell_3{K}=2\),

\begin{equation}
\label{eqn:2ndIPAD3FiniteHetero3}
\text{but if } \tau(L_i)\sim\left(2^21,31,31,31\right), \text{ for } i\in\lbrace 1,3,4\rbrace,
\end{equation}

\noindent
then the \(3\)-class tower
\(K<\mathrm{F}_3^1{K}<\mathrm{F}_3^2{K}<\mathrm{F}_3^3{K}=\mathrm{F}_3^\infty{K}\)
of \(K\) has exact length \(\ell_3{K}=3\).

\end{theorem}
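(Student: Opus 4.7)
The overall strategy mirrors \S\ref{s:ThreeStage5Towers} and Corollary~\ref{cor:FiniteHomo}, but exploits iterated IPADs of second order to resolve the heterogeneity of the Shafarevich cover. The plan proceeds in four steps.

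First, I translate the given arithmetic data into group-theoretic data for the second $3$-class group $\mathfrak{G}:=\mathrm{G}_3^2{K}$. By Theorem~\ref{thm:ArtinPattern} we have $\mathrm{AP}(K)=\mathrm{AP}(\mathfrak{G})$, so $\mathfrak{G}$ realizes $\tau_1{\mathfrak{G}}\sim(21,32,21,21)$ together with TKT E.\(9\), $\varkappa_1{\mathfrak{G}}\sim(2334)$. Recursive executions of the $p$-group generation algorithm (\S\ref{ss:pGroupAlgorithm}) starting from the abelian root $R=\langle 9,2\rangle$, with the break-off criterion supplied by the monotonicity of Artin patterns (Theorem~\ref{thm:Monotony}), identify the metabelian candidate(s) for $\mathfrak{G}$ on the coclass tree $\mathcal{T}^2{\langle 3^5,8\rangle}$ in their ground state.

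Second, I cut down to the Shafarevich cover. For a real quadratic base field we have $r=1$, $\varrho=2$, and $\theta=0$, so Theorem~\ref{thm:RelationRank} together with Definition~\ref{dfn:ShafarevichCover} imposes $2\le d_2{G}\le 3$. Unlike the complex-quadratic analogue treated in \S\ref{ss:FiniteHomo}, this bound does \emph{not} rule out the metabelian representative $\mathfrak{G}$, so $\mathrm{cov}(\mathfrak{G},K)$ contains both $\mathfrak{G}$ itself (derived length~$2$) and those non-metabelian descendants of derived length~$3$ which pass the relation-rank test. This heterogeneity is exactly what prevents the first-order IPAD from deciding $\ell_3{K}$ and forces the appeal to second-order data.

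Third, and decisively, I invoke Theorem~\ref{thm:IteratedIPAD}: the iterated IPAD satisfies $\tau^{(2)}{G}=\tau^{(2)}{K}$ for the genuine $3$-tower group $G$. For each candidate $G\in\mathrm{cov}(\mathfrak{G},K)$ and each subgroup $H_i\in\mathrm{Lyr}_1{G}$ associated with $L_i$, I compute $\tau^{(1)}{H_i}=\lbrack\tau_0{H_i};(U/U^\prime)_{U\in\mathrm{Lyr}_1{H_i}}\rbrack$ group-theoretically. For the three indices $i\in\lbrace 1,3,4\rbrace$ with $\tau_0{H_i}\simeq(21)$, the aim is to show that the metabelian candidate $G=\mathfrak{G}$ produces the pattern $(2^21,21,21,21)$, whereas every non-metabelian element of the Shafarevich cover produces $(2^21,31,31,31)$, the extra derived layer raising three of the four abelianizations from type $21$ to type $31$. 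Matching the observed arithmetic pattern \eqref{eqn:2ndIPAD3FiniteHetero2} or \eqref{eqn:2ndIPAD3FiniteHetero3} against these two group-theoretic possibilities identifies the derived length of $G$ and pins down $\ell_3{K}$ as $2$ or $3$, respectively. The main obstacle will lie in this third step: exhaustively verifying that the two IPAD patterns displayed above are the \emph{only} ones occurring across $\mathrm{cov}(\mathfrak{G},K)$, with a clean dichotomy separating the metabelian candidate from the non-metabelian ones. Because (as remarked at the beginning of \S\ref{ss:FiniteHomo}) the cardinality of the cover in section~E is expected to grow linearly with the nilpotency class, a careful application of Theorem~\ref{thm:Monotony} is needed to truncate the search to a finite explicit list of descendants, after which the calculation of second-order abelianizations becomes a mechanical MAGMA computation in the spirit of Theorem~\ref{thm:Second5ClassGroupGT} and Corollary~\ref{cor:FiniteHomo}.
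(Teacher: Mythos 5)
Your plan is correct and follows essentially the same route as the paper: translate the Artin pattern to the second $3$-class group, observe that for a real quadratic field the Shafarevich bound $2\le d_2{G}\le\varrho+r+\theta=3$ leaves a heterogeneous cover containing both $\mathfrak{G}$ and non-metabelian candidates, and then separate them by matching the iterated IPAD of second order via Theorem~\ref{thm:IteratedIPAD} — which is precisely the method the paper attributes to \cite{Ma7} and \cite{Ma11}. Like the paper, you defer the decisive finite enumeration of $\mathrm{cov}(\mathfrak{G},K)$ and the computation of the second-order abelianizations to a mechanical calculation, so nothing essential differs.
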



\begin{corollary}
\label{cor:FiniteHetero}

A real quadratic field \(K\) which satisfies the assumptions in Theorem
\ref{thm:FiniteHomo},
in particular the Formulas
\eqref{eqn:TKT3FiniteHetero}
and
\eqref{eqn:TTT3FiniteHetero},
has the second \(3\)-class group

\begin{equation}
\label{eqn:Second3ClassGroupHetero}
\mathrm{G}_3^2{K}\simeq\langle 3^7,n\rangle, \quad \text{ where } n\in\lbrace 302,306\rbrace,
\end{equation}

\noindent
with order \(2\,187\), class \(5\), coclass \(2\), derived length \(2\), and relation rank \(3\),\\
and, if Formula
\eqref{eqn:2ndIPAD3FiniteHetero3}
is satisfied, the \(3\)-class tower group

\begin{equation}
\label{eqn:3TowerGroupHetero}
\mathrm{G}_3^\infty{K}=\mathrm{G}_3^3{K}\simeq\langle 3^6,54\rangle-\#2;n, \quad \text{ where } n\in\lbrace 2,6\rbrace,
\end{equation}

\noindent
with order \(6\,561\), class \(5\), coclass \(3\), derived length \(3\), and relation rank \(2\),\\
otherwise the \(3\)-class tower group coincides with the second \(3\)-class group.

\end{corollary}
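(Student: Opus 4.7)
The plan is to reduce the corollary to a sequence of finite group-theoretic computations, carried out via the toolkit assembled in Sections~\ref{ss:ClassExtAndTransfer}--\ref{ss:MonotonyDescendantTrees}. First I would invoke Theorem~\ref{thm:ArtinPattern} to transport the Artin pattern \(\mathrm{AP}(K)\) prescribed by \eqref{eqn:TKT3FiniteHetero} and \eqref{eqn:TTT3FiniteHetero} to the second \(3\)-class group \(\mathfrak{G}=\mathrm{G}_3^2{K}\), so that the number-theoretic hypotheses become the constraints \(\varkappa(\mathfrak{G})\sim(2334)\), \(\tau_1{\mathfrak{G}}\sim(21,32,21,21)\) on the restricted Artin pattern of a finite metabelian \(3\)-group. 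Using the \(p\)-group generation algorithm (Section~\ref{ss:pGroupAlgorithm}) recursively from the root \(\langle 3^2,2\rangle\), together with the break-off supplied by the monotony in Theorem~\ref{thm:Monotony}, one enumerates all metabelian descendants compatible with these invariants. Previous classifications of section~E of the coclass tree \(\mathcal{T}^2\langle 3^5,8\rangle\) (the ground state of type E.\(9\)) show that precisely the two candidates \(\langle 3^7,302\rangle\) and \(\langle 3^7,306\rangle\) survive, which yields the first assertion of the corollary; their invariants (order \(3^7\), class \(5\), coclass \(2\), derived length \(2\), relation rank \(3\)) can be read off directly from their pc-presentations.

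Next I would determine the Shafarevich cover \(\mathrm{cov}(\mathfrak{G},K)\). Since \(K\) is real quadratic with \(p\)-class rank \(\varrho=2\), unit rank \(r=1\), and \(\theta=0\), Theorem~\ref{thm:RelationRank} specialises to the bound \(2\le d_2{G}\le 3\) for any member \(G\) of \(\mathrm{cov}(\mathfrak{G})\). Running the \(p\)-group generation algorithm on the non-metabelian descendants of \(\mathfrak{G}\) and computing their Schur multipliers isolates the admissible non-metabelian candidates; by the construction of section~E in~\cite{Ma6,Ma10} the only non-metabelian groups surviving the Shafarevich bound together with the invariance of \(\mathrm{AP}\) under the complete cover (Theorem~\ref{thm:RstrAPofCompleteCover}) are the two sporadic descendants \(\langle 3^6,54\rangle-\#2;n\) for \(n\in\{2,6\}\). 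Thus \(\mathrm{cov}(\mathfrak{G},K)=\{\mathfrak{G},\,\langle 3^6,54\rangle-\#2;2,\,\langle 3^6,54\rangle-\#2;6\}\) is heterogeneous: it mixes derived lengths \(2\) and \(3\), so the first-order IPAD \(\tau^{(1)}{K}\) alone cannot decide \(\ell_3{K}\).

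Finally I would appeal to Theorem~\ref{thm:IteratedIPAD}: since \(\tau^{(2)}{G}=\tau^{(2)}{K}\) for the \(p\)-tower group \(G\), it suffices to tabulate the iterated IPAD of second order on each of the three surviving cover members and check that they are pairwise distinct. A direct calculation with the pc-presentations (forming \(\mathrm{Lyr}_1{H}\) for each \(H\in\mathrm{Lyr}_1{G}\) and reading off the abelianisations) shows that the metabelian choice \(G\simeq\mathfrak{G}\) produces \(\tau(L_i)\sim(2^21,21,21,21)\) for \(i\in\{1,3,4\}\), whereas each of the two non-metabelian choices produces \(\tau(L_i)\sim(2^21,31,31,31)\); conditions~\eqref{eqn:2ndIPAD3FiniteHetero2} and~\eqref{eqn:2ndIPAD3FiniteHetero3} therefore select the correct element of \(\mathrm{cov}(\mathfrak{G},K)\), and Artin reciprocity translates the derived length back into the length of the class tower. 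The main obstacle is the middle step: proving that no further non-metabelian descendants slip through the Shafarevich filter requires a careful inspection of the bifurcation at \(\langle 3^6,54\rangle\) and termination of the search, together with a verification that the two sporadic candidates indeed have relation rank \(2\) (rather than \(3\) or more), so that they genuinely qualify as Schur-\(\sigma\) covers admissible for a real quadratic base field.
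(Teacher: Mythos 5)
Your proposal follows essentially the same route as the paper, whose own proof of this corollary is simply a citation of \cite[Thm.\ 6.3]{Ma7} and \cite[Thm.\ 4.2]{Ma11}: transport the Artin pattern to \(\mathfrak{G}\) via reciprocity, enumerate metabelian candidates with the \(p\)-group generation algorithm under the monotony break-off, filter the cover by the Shafarevich bound \(2\le d_2{G}\le 3\) (with \(\varrho=2\), \(r=1\), \(\theta=0\)), and separate the resulting heterogeneous cover by iterated IPADs of second order. One small slip: the surviving cover members are \emph{not} pairwise distinguished by \(\tau^{(2)}\) --- the two non-metabelian candidates \(\langle 3^6,54\rangle-\#2;2\) and \(\langle 3^6,54\rangle-\#2;6\) share the same second-order data (just as \(\langle 3^7,302\rangle\) and \(\langle 3^7,306\rangle\) share the same Artin pattern), which is precisely why the corollary retains the two-fold ambiguities \(n\in\lbrace 302,306\rbrace\) and \(n\in\lbrace 2,6\rbrace\); the second-order IPAD only needs to, and only can, separate derived length \(2\) from derived length \(3\).
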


\begin{proof}
The statements of Theorem
\ref{thm:FiniteHetero}
and Corollary
\ref{cor:FiniteHetero}
have been proved in
\cite[Thm. 6.3, pp. 298--299]{Ma7}
and
\cite[Thm. 4.2]{Ma11}.
We point out that the remaining component
\(\tau(L_2)\sim\left(2^21,31^2,31^2,31^2\right)\)
of the iterated IPAD of second order
does not admit a decision,
and that the common entry \(\mathrm{Cl}_3{M_{i,1}}\simeq (2^21)\) of all components \(\tau(L_i)\)
corresponds to the Hilbert \(3\)-class field \(\mathrm{F}_3^1{K}\) of \(K\),
whereas all other extensions \(M_{i,j}\) with \(j>1\) are non-abelian over \(K\).
\end{proof}


\begin{example}
\label{exm:RealizationFiniteHetero}

The fundamental discriminants \(0<d<10^7\) of real quadratic fields \(K=\mathbb{Q}(\sqrt{d})\),
satisfying the conditions
\eqref{eqn:TKT3FiniteHetero},
\eqref{eqn:TTT3FiniteHetero},
and
\eqref{eqn:2ndIPAD3FiniteHetero3},
resp.
\eqref{eqn:2ndIPAD3FiniteHetero2},
are given by

\begin{equation}
\label{eqn:RealizationFiniteHetero3}
d=342\,664,1\,452\,185,1\,787\,945,4\,861\,720,5\,976\,988,8\,079\,101,9\,674\,841,
\end{equation}

\noindent
resp.

\begin{equation}
\label{eqn:RealizationFiniteHetero2}
d=4\,760\,877,6\,652\,929,7\,358\,937,9\,129\,480.
\end{equation}

\noindent
Evidence of a similar behaviour of real quadratic fields \(K\) with
the first excited state of one of the TKTs \(\varkappa(K)\) in section E is provided in
\cite[Example 4.1]{Ma13}.

\end{example}



\subsection{Infinite cover}
\label{ss:Infinite}

The infinitude of the cover, \(\#\mathrm{cov}(\mathfrak{G})=\infty\),
has been proven by Bartholdi and Bush
\cite{BaBu}
for the sporadic \(3\)-group \(\mathfrak{G}=N:=\langle 3^6,45\rangle\),
with \(\tau=\lbrack 1^3,1^3,21,1^3\rbrack\) and \(\varkappa=(4443)\) of type H.\(4^\ast\),
and it is conjectured for \(\mathfrak{G}=W:=\langle 3^6,57\rangle\),
with \(\tau=\lbrack(21)^4\rbrack\) and \(\varkappa=(2143)\) of type G.\(19^\ast\).

In the former case,
\(\mathrm{cov}(N)\) contains an infinite sequence of Schur \(\sigma\)-groups.
Consequently, even the Shafarevich cover \(\mathrm{cov}(N,K)\)
with respect to complex quadratic fields \(K\) of \(3\)-class rank \(\varrho=2\) is infinite.
Furthermore, it may be called \textit{infinitely heterogeneous}
in the sense of unbounded derived length.
This fact causes the considerable difficulty that iterated IPADs of increasing order
are required for the distinction between the members of \(\mathrm{cov}(N,K)\).
Already for separating the leading two members,
we have to compute extensions \(M\vert K\) of absolute degree \(\lbrack M:\mathbb{Q}\rbrack=54\)
in the third layer over \(K\),
as the following theorem shows.


\begin{theorem}
\label{thm:Infinite}

Let \(K\) be a complex quadratic field with \(3\)-class group \(\mathrm{Cl}_3{K}\) of type \((1^2)\)
and denote by \(L_1,\ldots,L_4\) its four unramified cyclic cubic extensions.
If \(K\) possesses the \(3\)-capitulation type

\begin{equation}
\label{eqn:TKT3Infinite}
\varkappa(K)\sim (4443), \text{ nearly constant, without fixed points, and containing a transposition},
\end{equation}

\noindent
in the four extensions \(L_i\), and if the \(3\)-class groups \(\mathrm{Cl}_3{L_i}\)
are given by

\begin{equation}
\label{eqn:TTT3Infinite}
\tau(K)\sim\left(1^3,1^3,21,1^3\right),
\end{equation}

\noindent
then \(L_i\) has thirteen unramified bicyclic bicubic extensions \(M_{i,1},\ldots,M_{i,13}\), for \(i\in\lbrace 1,2,4\rbrace\),
but \(L_3\) has only four unramified abelian extensions \(M_{3,1},\ldots,M_{3,4}\) of relative degree \(3^2\),
which are also unramified but not necessarily abelian over \(K\),
and the \(3\)-class groups \(\mathrm{Cl}_3{M_{i,j}}\) admit the following decision
about the length \(\ell_3{K}\) of the \(3\)-class tower of \(K\):

\begin{equation}
\label{eqn:2ndIPAD3Infinite3}
\text{if } \tau_2(L_i)\sim\left(2^21,(21^2)^{12}\right), \text{ for } i\in\lbrace 1,2\rbrace,
\tau_2(L_3)\sim\left(2^21,(2^2)^3\right), \tau_2(L_4)\sim\left(2^21,(1^3)^3,(2^2)^3,(21)^6\right),
\end{equation}

\noindent
then the \(3\)-class tower
\(K<\mathrm{F}_3^1{K}<\mathrm{F}_3^2{K}<\mathrm{F}_3^3{K}=\mathrm{F}_3^\infty{K}\)
of \(K\) has exact length \(\ell_3{K}=3\),

\begin{equation}
\label{eqn:2ndIPAD3Infinite4}
\text{but if } \tau_2(L_i)\sim\left((2^21)^4,(31^2)^9\right), \text{ for } i\in\lbrace 1,2\rbrace,
\tau_2(L_3)\sim\left(2^21,(32)^3\right), \tau_2(L_4)\sim\left(2^21,(1^3)^3,(32)^3,(21)^6\right),
\end{equation}

\noindent
then the \(3\)-class tower
\(K<\mathrm{F}_3^1{K}<\mathrm{F}_3^2{K}<\mathrm{F}_3^3{K}\le\ldots\le\mathrm{F}_3^\infty{K}\)
of \(K\) may have any length \(\ell_3{K}\ge 3\).

\end{theorem}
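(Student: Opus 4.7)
The plan is to identify \(\mathfrak{G}=\mathrm{G}_3^2{K}\) via the Artin pattern, invoke the Shafarevich bound to cut \(\mathrm{cov}(\mathfrak{G},K)\) down to Schur \(\sigma\)-groups, and then use iterated abelian type invariants deep inside the descendant tree of \(\mathfrak{G}\) to extract the dichotomy.

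First, by Theorem \ref{thm:ArtinPattern} we have \(\mathrm{AP}(\mathfrak{G})=\mathrm{AP}(K)\), so \(\mathfrak{G}\) is a finite metabelian \(3\)-group realising \(\tau_1\sim(1^3,1^3,21,1^3)\) and \(\varkappa_1\sim(4443)\), i.e.\ of type H.\(4^\ast\). The classification of metabelian \(3\)-groups with two generators singles out a unique such candidate, the sporadic group \(N=\langle 3^6,45\rangle\), so \(\mathfrak{G}\simeq N\). Next, Theorem \ref{thm:RelationRank} further constrains \(\mathrm{cov}(N,K)\): a complex quadratic \(K\) has signature \((0,1)\), hence \(r=0\), and the discriminants of interest exclude \(K=\mathbb{Q}(\zeta_3)\), so \(\theta=0\). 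Therefore \(d_2(G)\le d_1(G)+r=\varrho=2\), and combined with \(\varrho\le d_2(G)\) this forces \(d_1(G)=d_2(G)=2\); complex conjugation supplies the GI-automorphism, so \(G\) must be a Schur \(\sigma\)-group in \(\mathrm{cov}(N,K)\). By the Bartholdi--Bush construction this Shafarevich cover is infinite and of unbounded derived length, so identification of \(G\) cannot be by cardinality alone.

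The counts of extensions \(M_{i,j}\) are pure abelian group theory: an abelian group of type \((1^3)\) (resp.\ \((21)\)) has \(13\) (resp.\ \(4\)) subgroups of index \(9\), namely its subgroups of order \(3\); by Galois theory these parametrise the unramified intermediate fields of \(\mathrm{F}_3^1{L_i}\) of relative degree \(9\) over \(L_i\), each of which is unramified over \(K\) via the composite tower \(K\subseteq L_i\subseteq M_{i,j}\) but in general non-abelian over \(K\). For the central claim, Theorem \ref{thm:IteratedIPAD} guarantees that \(G\) and \(K\) share the iterated IPAD of second order, and Artin reciprocity translates the class groups \(\mathrm{Cl}_3{M_{i,j}}\) into abelianisations of the index-\(27\) subgroups of \(G\). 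The \(p\)-group generation algorithm of \S \ref{ss:pGroupAlgorithm}, pruned by the monotonicity of Theorem \ref{thm:Monotony}, provides an effective recursion for these abelianisations along the descendants of \(N\).

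The key computational step is then to verify two incidence statements in \(\mathrm{cov}(N,K)\): the invariants \eqref{eqn:2ndIPAD3Infinite3} are realised by a unique Schur \(\sigma\)-group of derived length \(3\), which must therefore equal \(\mathrm{G}_3^\infty{K}\), forcing \(\ell_3{K}=3\); whereas the invariants \eqref{eqn:2ndIPAD3Infinite4} are realised both by the metabelian root \(N\) and by an infinite family of deeper Schur \(\sigma\)-descendants of arbitrarily large derived length produced by the Bartholdi--Bush sequence, so second-order data yield only the lower bound \(\ell_3{K}\ge 3\). The hard part is precisely this last verification: because \(\mathrm{cov}(N,K)\) is infinitely heterogeneous, no finite iteration of \(\tau^{(2)}\) can separate the members of the second family, and the strongest statement available from second-order data is the stated lower bound; sharpening it to an exact length would require iterated IPADs of order three or more, at the arithmetic cost of constructing extensions of absolute degree well beyond \(54\).
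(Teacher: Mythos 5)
Your overall strategy coincides with the paper's (whose own proof simply defers to \cite[Thm.~6.5]{Ma7} and \cite[\S\ 4.4, Tbl.~4]{Ma11}): identify \(\mathfrak{G}=\mathrm{G}_3^2{K}\simeq\langle 3^6,45\rangle\) from the Artin pattern, apply the Shafarevich bound \(\varrho\le d_2{G}\le\varrho+r+\theta\) with \(r=\theta=0\) to force \(G=\mathrm{G}_3^\infty{K}\) to be a Schur \(\sigma\)-group in the infinite cover of \(N=\langle 3^6,45\rangle\), and let the iterated IPAD of second order discriminate among the candidates in the pruned descendant tree. The counts \(13\) and \(4\) of the extensions \(M_{i,j}\) are handled correctly, and your deferral of the decisive tree computation is not by itself a deviation, since the paper defers it too.

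There is, however, one concrete slip. You assert that the invariants \eqref{eqn:2ndIPAD3Infinite4} are \emph{realised both by the metabelian root \(N\) and by} an infinite family of deeper Schur \(\sigma\)-descendants, and you then read off \(\ell_3{K}\ge 3\) from the shape of that candidate set. But \(N\) has order \(3^6\), so its subgroups of index \(27\) have order \(27\) and their abelianizations have order at most \(3^3\); they cannot produce components such as \((2^21)\) or \((32)\) of order \(3^5\). Hence \(N\) does not realise \eqref{eqn:2ndIPAD3Infinite4} --- and it is just as well, because if it did, your candidate set would contain a group of derived length \(2\) and the conclusion \(\ell_3{K}\ge 3\) would not follow as you state it. The exclusion of the metabelian possibility in case \eqref{eqn:2ndIPAD3Infinite4} must instead come from the relation rank: \(d_2{N}=4>2=\varrho\), so \(N\) lies outside the Shafarevich cover \(\mathrm{cov}(N,K)\). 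You have exactly this ingredient in your second paragraph (every admissible \(G\) is a balanced Schur \(\sigma\)-group), but you do not invoke it at the point where it is needed; with that repair the argument is the paper's.
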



\begin{corollary}
\label{cor:Infinite}

A complex quadratic field \(K\) which satisfies the assumptions in Theorem
\ref{thm:Infinite},
in particular the Formulas
\eqref{eqn:TKT3Infinite}
and
\eqref{eqn:TTT3Infinite},
has the second \(3\)-class group

\begin{equation}
\label{eqn:Second3ClassGroupInfinite}
\mathrm{G}_3^2{K}\simeq\langle 3^6,45\rangle,
\end{equation}

\noindent
with order \(729\), class \(4\), coclass \(2\), derived length \(2\), and relation rank \(4\),\\
and, if Formula
\eqref{eqn:2ndIPAD3Infinite3}
is satisfied, the \(3\)-class tower group

\begin{equation}
\label{eqn:3TowerGroupInfinite}
\mathrm{G}_3^\infty{K}=\mathrm{G}_3^3{K}\simeq\langle 3^6,45\rangle-\#2;2,
\end{equation}

\noindent
with order \(6\,561\), class \(5\), coclass \(3\), derived length \(3\), and relation rank \(2\),\\
otherwise the \(3\)-class tower group is of order at least \(3^{11}\) and may be any of the Schur \(\sigma\)-groups
\(\langle 3^6,45\rangle(-\#2;1-\#1;2)^j-\#2;2\) with \(j\ge 1\) and derived length at least \(3\).

\end{corollary}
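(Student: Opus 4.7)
The plan is to combine the identification techniques of \S\S\ \ref{ss:ClassExtAndTransfer}, \ref{ss:pGroupAlgorithm}, and \ref{ss:ArtinPattern} with Shafarevich's bound from Theorem \ref{thm:RelationRank} to pin down first the second \(3\)-class group \(\mathfrak{G}=\mathrm{G}_3^2 K\) and then the finite list of candidates for the \(3\)-tower group \(G=\mathrm{G}_3^\infty K\). First I would invoke Theorem \ref{thm:ArtinPattern} to transfer the arithmetic data \(\tau(K)\sim\lbrack 1^3,1^3,21,1^3\rbrack\) and \(\varkappa(K)\sim (4443)\) to the group-theoretic Artin pattern of \(\mathfrak{G}\). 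A targeted search in the descendant tree \(\mathcal{T}(\langle 3^2,2\rangle)\) by the \(p\)-group generation algorithm, with the monotony principle of Theorem \ref{thm:Monotony} pruning every branch whose Artin pattern already fails to match, exhibits \(\mathfrak{G}\simeq\langle 3^6,45\rangle\) as the unique metabelian vertex realising the prescribed pattern; the listed invariants (order \(729\), class \(4\), coclass \(2\), derived length \(2\), relation rank \(4\)) are then read off from the SmallGroups library.

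Next I would apply the Shafarevich bound. For a complex quadratic field \(K\) with \(\varrho=2\) one has \(r=0\), and (excluding the vacuous case \(d=-3\), where \(\mathrm{Cl}_3 K\) is trivial) \(\zeta_3\notin K\), so \(\theta=0\). Equation \eqref{eqn:ShafarevichCover} collapses to \(2\le d_2 G\le 2\), so \(d_2 G=d_1 G=2\), i.e., every element of \(\mathrm{cov}(\mathfrak{G},K)\) is a Schur \(\sigma\)-group. Since \(d_2(\mathfrak{G})=4>2\), the metabelianisation itself is excluded, which forces \(\ell_3 K\ge 3\). The Bartholdi--Bush result \cite{BaBu} then supplies the complete cover: an infinite family of non-metabelian descendants of \(\langle 3^6,45\rangle\), which after the balanced-presentation restriction is parametrised, modulo the terminal step, by the iteration index \(j\ge 0\) appearing in the statement.

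To distinguish between these candidates I would invoke Theorem \ref{thm:IteratedIPAD}, which asserts \(\tau^{(2)}(K)=\tau^{(2)}(G)\). For each candidate vertex \(G\) in \(\mathcal{T}(\langle 3^6,45\rangle)\) I would traverse the subgroup lattice two layers below \(G\), record the abelianisations \(U/U^\prime\) for each \(U\) of index \(3^2\), and compare the resulting components against the arithmetically computed \(\tau_2(L_i)\). Pattern \eqref{eqn:2ndIPAD3Infinite3} is matched only by \(\langle 3^6,45\rangle-\#2;2\) of order \(3^8=6\,561\), yielding \eqref{eqn:3TowerGroupInfinite} together with its invariants. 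Pattern \eqref{eqn:2ndIPAD3Infinite4} is incompatible with every vertex of order \(\le 3^{10}\), whence \(\lvert G\rvert\ge 3^{11}\), and among the candidates of order \(\ge 3^{11}\) satisfying the Schur \(\sigma\)-condition only the periodic family \(\langle 3^6,45\rangle(-\#2;1-\#1;2)^j-\#2;2\) with \(j\ge 1\) survives, giving the second assertion.

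The main obstacle I foresee is the finiteness side of this pattern-matching argument: one must show that no descendant \emph{outside} the explicit list reproduces the required second-order IPAD. For \eqref{eqn:2ndIPAD3Infinite3} this reduces to a finite bounded computation at capable vertices of order \(\le 3^8\), but for \eqref{eqn:2ndIPAD3Infinite4} the tree beneath \(\langle 3^6,45\rangle\) is infinite, and termination of the search hinges on the monotony of Theorem \ref{thm:Monotony} together with a careful analysis of how the IPAD grows along the mainline \((-\#2;1-\#1;2)^j\); the routine capitulation calculation must be supplemented by an argument that off-mainline descendants necessarily violate either the Schur \(\sigma\)-condition \(d_2=d_1=2\) or one of the prescribed \(\tau_2(L_i)\).
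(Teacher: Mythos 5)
Your overall strategy is the one the paper itself follows (its proof of this corollary is only a pointer to \cite[Thm. 6.5]{Ma7} and \cite[\S\ 4.4, Tbl. 4]{Ma11}): transfer the Artin pattern to group theory, isolate \(\langle 3^6,45\rangle\) as the unique metabelian realization, use the Shafarevich bound with \(r=0\), \(\theta=0\) to force \(d_2{G}=d_1{G}=2\) and thereby exclude \(\mathfrak{G}\) itself (relation rank \(4\)), invoke Bartholdi--Bush for the structure of the cover, and then discriminate among the surviving Schur \(\sigma\)-candidates by second-order abelian type invariants. That skeleton is correct and matches the intended argument.

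There is, however, a genuine defect in your discriminating step. You invoke Theorem \ref{thm:IteratedIPAD}, i.e.\ \(\tau^{(2)}{G}=\tau^{(2)}{K}\), and propose to record \(U/U^\prime\) for subgroups \(U\) of index \(3^2\) in \(G\). By Definition \ref{dfn:GroupIteratedIPAD}, that data is exactly the collection of first layers \(\tau_1{L_i}\), and the paper's proof of Corollary \ref{cor:Infinite} points out explicitly that \(\tau_1(L_i)\sim\left((21^2)^4,(2^2)^9\right)\), etc., \emph{does not admit a decision} between the two cases. The criterion of Theorem \ref{thm:Infinite} is formulated in terms of \(\tau_2(L_i)\), the \emph{second} layer of each \(L_i\): the relevant subgroups have index \(3^2\) in the maximal subgroups \(H_i=\mathrm{Gal}(\mathrm{F}_3^\infty{K}\vert L_i)\), hence index \(3^3\) in \(G\) and are in general not normal in \(G\); arithmetically they correspond to the degree-\(54\) fields \(M_{i,j}\). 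So the tool you cite does not deliver the data you then compare against. The gap is repairable --- the argument in the proof of Theorem \ref{thm:IteratedIPAD} (namely \(U^\prime=\mathrm{Gal}(\mathrm{F}_3^\infty{K}\vert\mathrm{F}_3^1{M})\)) applies verbatim to subgroups of index \(3^2\) in \(H_i\), giving the needed multilayered identity \(\tau_2{H_i}=\tau_2{L_i}\) --- but you must state and use that extension rather than Theorem \ref{thm:IteratedIPAD} as it stands, and correspondingly push your lattice traversal one layer deeper than ``index \(3^2\) in \(G\)''. Your closing concern about termination of the search in the infinite tree under \eqref{eqn:2ndIPAD3Infinite4} is well placed; note that the corollary only claims membership in the periodic family \(\langle 3^6,45\rangle(-\#2;1-\#1;2)^j-\#2;2\) together with the lower bound \(3^{11}\) on the order, not a unique identification, which is precisely what the monotony principle plus the Schur \(\sigma\)-restriction can deliver.
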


\begin{proof}
The statements of Theorem
\ref{thm:Infinite}
and Corollary
\ref{cor:Infinite}
have been proved in
\cite[Thm. 6.5, pp. 304--306]{Ma7}
and
\cite[\S\ 4.4, Tbl. 4]{Ma11}.
We point out that the first layer
\(\tau_1(L_i)\sim\left((21^2)^4,(2^2)^9\right)\), for \(1\le i\le 2\),
\(\tau_1(L_3)\sim\left(21^2,(31)^3\right)\),
\(\tau_1(L_4)\sim\left((21^2)^4,(1^2)^9\right)\),
of the iterated IPAD of second order
does not admit a decision.
\end{proof}


\begin{example}
\label{exm:RealizationInfinite}

The fundamental discriminants \(-30\,000<d<0\) of complex quadratic fields \(K=\mathbb{Q}(\sqrt{d})\),
satisfying the conditions
\eqref{eqn:TKT3Infinite},
\eqref{eqn:TTT3Infinite},
and
\eqref{eqn:2ndIPAD3Infinite3},
resp.
\eqref{eqn:2ndIPAD3Infinite4},
are given by

\begin{equation}
\label{eqn:RealizationInfinite3}
d=-3\,896,-25\,447,-27\,355,
\end{equation}

\noindent
resp.

\begin{equation}
\label{eqn:RealizationInfinite4}
d=-6\,583,-23\,428,-27\,991.
\end{equation}

\end{example}



\section{Three-stage towers of \(2\)-class fields}
\label{s:ThreeStage2Towers}

For historical reasons,
the very first discovery of a \(p\)-class tower with three stages for \(p=2\) merits attention.
It is due to Bush
\cite{Bu}
in \(2003\).
He investigated complex quadratic fields \(K\) with \(2\)-class rank \(\varrho=2\),
since it is relatively easy to compute the unramified \(2\)-extensions \(M\vert K\) in several layers
with absolute degrees \(\lbrack M:\mathbb{Q}\rbrack\in\lbrace 4,8,16\rbrace\).


\begin{theorem}
\label{thm:ThreeStage2Towers}

Let \(K\) be a complex quadratic field with \(2\)-class group \(\mathrm{Cl}_2{K}\) of type \((21)\),
denote by \(L_{1,1},\ldots,L_{1,3}\) its three unramified quadratic extensions,
and by \(L_{2,1},\ldots,L_{2,3}\) its three unramified abelian quartic extensions, such that
\(L_{2,3}=\prod_{i=1}^3\,L_{1,i}\) is bicyclic biquadratic and \(L_{1,3}=\bigcap_{i=1}^3\,L_{2,i}\).
If the \(2\)-class groups \(\mathrm{Cl}_2{L_{1,i}}\), resp. \(\mathrm{Cl}_2{L_{2,i}}\), are given by

\begin{equation}
\label{eqn:TTT2Lyr1and2}
\tau_1(K)=\left(2^2,31,1^3\right), \quad \tau_2(K)=\left(2^2,21^2,21^2\right),
\end{equation}

\noindent
then \(L_{1,i}\) has three unramified quadratic extensions \(M_{i,1},\ldots,M_{i,3}\) for \(1\le i\le 2\),
and \(L_{1,3}\) has seven unramified quadratic extensions \(M_{3,1},\ldots,M_{3,7}\),
which are also unramified but not necessarily abelian over \(K\),
and the \(2\)-class groups \(\mathrm{Cl}_2{M_{i,j}}\) admit the following statement.

\begin{equation}
\label{eqn:2ndIPADLyr1}
\text{If } \tau_1(L_{1,1})=\left((21^2)^3\right), \quad \tau_1(L_{1,2})=\left(21^2,41,41\right), \quad \tau_1(L_{1,3})=\left((21^2)^6,2^2\right),
\end{equation}

\noindent
then the \(2\)-class tower
\(K<\mathrm{F}_2^1{K}<\mathrm{F}_2^2{K}<\mathrm{F}_2^3{K}=\mathrm{F}_2^\infty{K}\)
of \(K\) has exact length \(\ell_2{K}=3\).

\end{theorem}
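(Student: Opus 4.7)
My plan is to follow exactly the pattern-recognition strategy developed in Sections~\ref{s:ThreeStage5Towers} and~\ref{s:ThreeStage3Towers}, applied to the prime $p=2$ and the base field $K$ of type $(21)$. The four main steps are: identify $\mathfrak{G}=\mathrm{G}_2^2 K$ from the first-order data, compute the Shafarevich cover $\mathrm{cov}(\mathfrak{G},K)$ via the Shafarevich bound, use the iterated IPAD to isolate the correct non-metabelian candidate $G=\mathrm{G}_2^\infty K$, and finally read off the derived length.

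\textbf{Step 1 (second 2-class group).} By Theorem~\ref{thm:ArtinPattern}, the multi-layered TTT $\tau(K)=[\tau_0 K;\tau_1 K;\tau_2 K]$ with $\tau_0 K=(21)$ and the components prescribed in \eqref{eqn:TTT2Lyr1and2} coincides with $\tau(\mathfrak{G})$ of the metabelian group $\mathfrak{G}:=\mathrm{G}_2^2 K$. Starting from the root $R=\langle 8,2\rangle$ (the abelian 2-group of type $(21)$), I would recursively execute the $p$-group generation algorithm of Subsection~\ref{ss:pGroupAlgorithm}, pruning branches by the monotonicity principle of Theorem~\ref{thm:Monotony}: any vertex whose $\tau_1$ or $\tau_2$ already strictly dominates the prescribed invariants can be discarded together with its whole subtree. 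The goal is to show that, up to isomorphism, a unique metabelian 2-group $\mathfrak{G}$ realizes the pattern.

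\textbf{Step 2 (Shafarevich cover).} Since $K$ is complex quadratic, its signature $(0,1)$ yields $r=0$, and for $p=2$ we automatically have $\zeta=-1\in K$, so $\theta=1$; together with $\varrho=d_1(\mathrm{Cl}_2 K)=2$, Theorem~\ref{thm:RelationRank} gives the bound $2\le d_2 G\le 3$. I would enumerate $\mathrm{cov}(\mathfrak{G})$ by continuing the descendant-tree construction of Step~1 past the metabelian root $\mathfrak{G}$, descending through $\#2$-steps into the non-metabelian regime, and selecting those $G$ with $d_2 G\in\{2,3\}$. This yields the \emph{a priori} heterogeneous set $\mathrm{cov}(\mathfrak{G},K)$, which in general will contain $\mathfrak{G}$ itself together with several non-metabelian descendants of distinct derived lengths.

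\textbf{Step 3 (separation by iterated IPAD) and Step~4 (derived length).} By Theorem~\ref{thm:IteratedIPAD}, the iterated IPAD of second order satisfies $\tau^{(2)}G=\tau^{(2)}K$; in particular the second-layer data $\tau_1(L_{1,i})$ prescribed in \eqref{eqn:2ndIPADLyr1} coincide with the first-layer TTTs $\tau_1 H$ for the subgroups $H\in\mathrm{Lyr}_1 G$ corresponding to $L_{1,i}$. For each candidate in $\mathrm{cov}(\mathfrak{G},K)$ I would compute $\tau_1 H$ and compare. The crucial point is that the metabelian $\mathfrak{G}$ has a strictly smaller $\tau_1 H$ (by the monotony of Theorem~\ref{thm:Monotony} applied to the parent projection $G\to\mathfrak{G}$), whereas the non-metabelian descendants acquire the entries $(21^2)^3$, $(21^2,41,41)$ and the bicyclic component $(21^2)^6\,2^2$ indicated in \eqref{eqn:2ndIPADLyr1}. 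This singles out a unique $G\in\mathrm{cov}(\mathfrak{G},K)$ with $G''\neq 1$ and $G'''=1$, proving $\ell_2 K=3$ exactly.

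The main obstacle I expect is \emph{Step~3 on the arithmetic side}: the 2-class groups $\mathrm{Cl}_2 M_{i,j}$ must be extracted from non-abelian unramified octic extensions $M_{i,j}\vert K$, i.e.\ from number fields of absolute degree~$8$ whose class-field machinery over the intermediate quartic $L_{1,i}$ has to be set up carefully. Historically this was precisely the step that became feasible with the early MAGMA routines used by Bush~\cite{Bu}. A secondary obstacle is showing uniqueness of the non-metabelian candidate: if the cover $\mathrm{cov}(\mathfrak{G})$ turns out to be infinite in the style of \S\ref{ss:Infinite}, one may have to invoke a further layer of the iterated IPAD to guarantee termination, but the Shafarevich bound $d_2 G\le 3$ together with the constraints on $\tau_1$ should keep the relevant portion of the descendant tree finite.
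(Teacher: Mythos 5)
Your proposal follows essentially the approach that the paper relies on: the paper's own proof is a one-line citation of Bush's Prop.~2, and Bush's computation is precisely the pattern-matching you describe (enumerate $2$-groups realizing the prescribed first- and second-layer TTT over the abelian root of type $(21)$, cut the candidate list down by the Shafarevich bound $d_2{G}\le\varrho+r+\theta=2+0+1=3$, and compare iterated IPADs of second order against the arithmetically computed $\tau_1(L_{1,i})$). One correction to your Step~3: the iterated IPAD does \emph{not} isolate a unique non-metabelian candidate --- Corollary~\ref{cor:ThreeStage2Towers} records two surviving groups $\langle 2^8,426\rangle$ and $\langle 2^8,427\rangle$ --- but both have derived length $3$, so the conclusion $\ell_2{K}=3$ is unaffected; note also that the metabelian $\mathfrak{G}\simeq\langle 2^7,84\rangle$ is expelled from the Shafarevich cover already at your Step~2 by its relation rank $4>3$, rather than by the second-order abelianization data.
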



\begin{corollary}
\label{cor:ThreeStage2Towers}

A complex quadratic field \(K\) which satisfies the assumptions in Theorem
\ref{thm:ThreeStage2Towers},
in particular the Formulas
\eqref{eqn:TTT2Lyr1and2}
and
\eqref{eqn:2ndIPADLyr1},
has the second \(2\)-class group

\begin{equation}
\label{eqn:Second2ClassGroup}
\mathrm{G}_2^2{K}\simeq\langle 2^7,84\rangle,
\end{equation}

\noindent
with order \(128\), class \(4\), coclass \(3\), derived length \(2\), and relation rank \(4\),\\
and one of the following two candidates for the \(2\)-class tower group

\begin{equation}
\label{eqn:2TowerGroup}
\mathrm{G}_2^\infty{K}=\mathrm{G}_2^3{K}\simeq\langle 2^8,n\rangle, \quad \text{ where } n\in\lbrace 426,427\rbrace,
\end{equation}

\noindent
with order \(256\), class \(5\), coclass \(3\), derived length \(3\), and relation rank \(3\).

\end{corollary}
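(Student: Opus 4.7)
The plan is to adapt the strategy already used in Corollary \ref{cor:MainTheorem5} to the setting $p=2$: first identify $\mathfrak{G}=\mathrm{G}_2^2 K$ from the Artin pattern, then enumerate its cover, and finally cut it down to the Shafarevich cover using the iterated IPAD of second order. By Theorem \ref{thm:ArtinPattern}, the restricted Artin pattern of $\mathfrak{G}$ coincides componentwise with that of $K$; in particular $\mathfrak{G}/\mathfrak{G}'\simeq\mathrm{Cl}_2 K$ is of type $(21)$ and $\tau_1(\mathfrak{G}),\tau_2(\mathfrak{G})$ match \eqref{eqn:TTT2Lyr1and2}. Starting at the abelian root $\langle 8,2\rangle$ and recursively applying the $p$-group generation algorithm of \S\ref{ss:pGroupAlgorithm}, construct all metabelian descendants whose layered TTT fits, pruning branches by the monotony relations of Theorem \ref{thm:Monotony}. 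The search is asserted to terminate at the unique vertex $\langle 128,84\rangle$, whose order, nilpotency class, coclass, derived length, and relation rank $d_2\mathfrak{G}=4$ are then confirmed by direct computation in MAGMA.

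For the tower group, continue the $p$-group generation algorithm beyond $\mathfrak{G}$ into its non-metabelian descendants $H$ satisfying $H/H''\simeq\mathfrak{G}$, thereby enumerating the finite part of $\mathrm{cov}(\mathfrak{G})$ up to a sufficient order. Now apply the Shafarevich bound of Theorem \ref{thm:RelationRank} and Definition \ref{dfn:ShafarevichCover}: because $K$ is complex quadratic, the signature $(r_1,r_2)=(0,1)$ gives $r=0$; since $p=2$ and $-1\in K$, the invariant $\theta$ equals $1$; and the $2$-class rank is $\varrho=d_1(\mathrm{Cl}_2 K)=2$. The resulting inequality $2\le d_2 G\le 3$ eliminates $\mathfrak{G}$ itself (whose relation rank is four) and restricts $\mathrm{cov}(\mathfrak{G},K)$ to those non-metabelian descendants with near-balanced presentation. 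Finally, Theorem \ref{thm:IteratedIPAD} gives $\tau^{(2)} G=\tau^{(2)} K$, so matching the iterated IPAD data \eqref{eqn:2ndIPADLyr1} against the surviving candidates singles out precisely the two groups $\langle 256,426\rangle$ and $\langle 256,427\rangle$, each of order $256$, nilpotency class $5$, coclass $3$, derived length $3$, and relation rank $3$, which are therefore the two possibilities for $\mathrm{G}_2^\infty K=\mathrm{G}_2^3 K$.

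The main obstacle is establishing exhaustiveness of the descendant-tree search at both stages. At the first stage this reduces to a monotony-based break-off argument governed by Theorem \ref{thm:Monotony}. At the second stage it is considerably more delicate: one must confirm that no descendant of $\mathfrak{G}$ of order exceeding $2^8$ can simultaneously satisfy the Shafarevich inequality $d_2 G\le 3$ and the iterated IPAD constraint, a verification that relies on the Newman--O'Brien implementation of the $p$-group generation algorithm together with systematic computation of $\dim_{\mathbb{F}_2} H^2(G,\mathbb{F}_2)$ for each candidate. A secondary point worth emphasising is that the two survivors $\langle 256,426\rangle$ and $\langle 256,427\rangle$ cannot be separated by $\tau^{(2)}$ alone, so the conclusion of the corollary must present them as joint candidates for $\mathrm{G}_2^\infty K$; any further discrimination would require higher-order iterated IPADs, which in turn would demand the explicit construction of unramified $2$-extensions $M\vert K$ of absolute degree beyond sixteen.
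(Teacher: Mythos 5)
Your proposal is correct and follows essentially the same route as the paper, which for this corollary simply defers to Bush's 2003 computation \cite[Prop.~2]{Bu}: that computation is precisely the descendant-tree enumeration constrained by the abelianization data together with the Shafarevich bound \(d_2{G}\le\varrho+r+\theta=2+0+1=3\) (valid since \(r=0\) and \(-1\in K\) forces \(\theta=1\)), which excludes \(\mathfrak{G}\simeq\langle 2^7,84\rangle\) with \(d_2{\mathfrak{G}}=4\) and leaves the two candidates \(\langle 2^8,426\rangle\) and \(\langle 2^8,427\rangle\). Your identification of the genuinely hard point --- exhaustiveness of the second-stage search over descendants of order exceeding \(2^8\) --- is also the part that the paper leaves entirely to the cited computation.
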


\begin{proof}
The statements of Theorem
\ref{thm:ThreeStage2Towers}
and Corollary
\ref{cor:ThreeStage2Towers}
have essentially been proved by Bush in
\cite[Prop. 2, p. 321]{Bu}.
The derived subgroup of \(\langle 2^7,84\rangle\) is abelian of type \((21^2)\)
\end{proof}



\section{Tree topologies}
\label{s:TreeTopologies}

Let \(p\) be a prime,
\(n>m\ge 1\) be integers,
and \(K\) be a number field.
Then both, the \(n\)th and \(m\)th \(p\)-class group of \(K\),
are vertices of the descendant tree \(\mathcal{T}\left(\mathrm{G}_p^1{K}\right)\)
of the \(p\)-class group \(\mathrm{Cl}_p{K}=\mathrm{G}_p^1{K}\) of \(K\).
The vertex \(R:=\mathrm{G}_p^1{K}\) is the \textit{abelian tree root}.
Several invariants describe the mutual location of the vertices
\(\mathrm{G}_p^n{K}\) and \(\mathrm{G}_p^m{K}\)
in the tree topology.

\begin{definition}
\label{dfn:Deltas}

By the \textit{class increment}, resp. \textit{coclass increment}, we understand the difference
\(\Delta\mathrm{cl}(n,m):=\mathrm{cl}(\mathrm{G}_p^n{K})-\mathrm{cl}(\mathrm{G}_p^m{K})\),
resp. \(\Delta\mathrm{cc}(n,m):=\mathrm{cc}(\mathrm{G}_p^n{K})-\mathrm{cc}(\mathrm{G}_p^m{K})\).
The biggest common ancestor of \(\mathrm{G}_p^m{K}\) and \(\mathrm{G}_p^n{K}\)
is called their \textit{fork}, denoted by \(\mathrm{Fork}(m,n)\).

\end{definition}

\begin{remark}
\label{rmk:Deltas}

If we define the \textit{logarithmic order} of a finite \(p\)-group \(G\) by
\(\mathrm{lo}(G):=\log_p{\mathrm{ord}(G)}\),
and consider the situation in Definition
\ref{dfn:Deltas},
then the \textit{log ord increment},
\(\Delta\mathrm{lo}(n,m):=\mathrm{lo}(\mathrm{G}_p^n{K})-\mathrm{lo}(\mathrm{G}_p^m{K})\),
satisfies the relation
\(\Delta\mathrm{lo}(n,m)=\Delta\mathrm{cl}(n,m)+\Delta\mathrm{cc}(n,m)\).

\end{remark}

The concepts actually make sense for \(p\)-class towers of length \(\ell_p{K}\ge 3\).
For two-stage towers, we have the following trivial fact.


\begin{proposition}
\label{prp:RootTopology}

For any \(n\ge 2\), we have
\(\Delta\mathrm{cl}(n,1)=\mathrm{cl}(\mathrm{G}_p^n{K})-1\),
and \(\mathrm{Fork}(1,n)\) is given by the abelian tree root \(R=\mathrm{G}_p^1{K}\).

\end{proposition}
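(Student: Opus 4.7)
The plan is straightforward because the case $m=1$ is degenerate: one of the two vertices in question is the tree root itself, so both claims reduce to unpacking Definition \ref{dfn:Deltas} in light of Definition \ref{dfn:DescTree}.

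First I would observe that $\mathrm{G}_p^1{K}\simeq\mathrm{Cl}_p{K}$ is a non-trivial finite abelian $p$-group (by the standing assumption $\mathrm{Cl}_p{K}>1$ of \S\ref{ss:ClassExtAndTransfer}), hence has nilpotency class $\mathrm{cl}(\mathrm{G}_p^1{K})=1$. Substituting this into $\Delta\mathrm{cl}(n,1)=\mathrm{cl}(\mathrm{G}_p^n{K})-\mathrm{cl}(\mathrm{G}_p^1{K})$ immediately yields the first formula.

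For the fork, I would show that $R=\mathrm{G}_p^1{K}$ is a common ancestor of $\mathrm{G}_p^1{K}$ and $\mathrm{G}_p^n{K}$, and that it is maximal among such. That $R$ is its own ancestor follows from Definition \ref{dfn:DescTree} via the trivial path of length $\ell=0$. That $R$ is an ancestor of $\mathrm{G}_p^n{K}$ follows because iterating the parent operator $\pi$ on any finite $p$-group eventually produces its abelianization (each application strips the final term of the LCS, so after $\mathrm{cl}(\mathrm{G}_p^n{K})-1$ steps one reaches the quotient by $\gamma_2=(\mathrm{G}_p^n{K})'$); and the abelianization of $\mathrm{G}_p^n{K}=\mathrm{Gal}(\mathrm{F}_p^n{K}\vert K)$ is $\mathrm{Gal}(\mathrm{F}_p^1{K}\vert K)\simeq\mathrm{Cl}_p{K}=R$, since $\mathrm{F}_p^1{K}$ is the maximal abelian unramified $p$-extension contained in $\mathrm{F}_p^n{K}$.

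Finally, maximality is immediate from the tree structure: the descendant tree $\mathcal{T}(R)$ is rooted at $R$, so no vertex is a proper ancestor of $R$, and therefore $R$ itself is the unique and hence biggest common ancestor of the pair $\{\mathrm{G}_p^1{K},\mathrm{G}_p^n{K}\}$. There is essentially no obstacle in the argument; the only mildly substantive ingredient is the class field theoretic identification of the abelianization of $\mathrm{G}_p^n{K}$ with $\mathrm{Cl}_p{K}$, which is already implicit in the definition of the iterated $p$-class fields and was used analogously in the proof of Theorem \ref{thm:IteratedIPAD}.
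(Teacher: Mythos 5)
Your proof is correct, and since the paper states this Proposition as a ``trivial fact'' with no proof at all, your unpacking of Definitions \ref{dfn:Deltas} and \ref{dfn:DescTree} --- the abelian root has nilpotency class \(1\), iterating the parent operator \(\pi\) reaches the abelianization \(\mathrm{G}_p^n{K}/(\mathrm{G}_p^n{K})'\simeq\mathrm{Cl}_p{K}=R\), and the root has no proper ancestors in \(\mathcal{T}(R)\) --- is exactly the argument the author intends the reader to supply. No discrepancies to report.
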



\renewcommand{\arraystretch}{1.1}

\begin{table}[ht]
\caption{Simple tree topologies of three-stage \(p\)-class towers}
\label{tbl:SimpleTreeTopologies}
\begin{center}
\begin{tabular}{|c|r|l|c|c|c|l|c|c|c|c|}
\hline
 \(p\) & \(d\) & TKT & \(\Delta\mathrm{lo}\) & \(\Delta\mathrm{cc}\) & \(\Delta\mathrm{cl}\) & \(\mathrm{Fork}(2,3)\) & Topology & \(G^{\prime\prime}\) & Proof & Ref. \\
\hline
 \(2\) &       \(-1\,780\) & B.\(8\)               &  \(1\) &  \(0\) &  \(1\) & Y\(=\mathfrak{G}=\pi{G}\)            &   child &         \((1)\) & \(\tau^{(2)}\) & \cite{Bu} \\
\hline
 \(3\) &      \(957\,013\) & H.\(4^\ast\)          &  \(1\) &  \(0\) &  \(1\) & N\(=\mathfrak{G}=\pi{G}\)            &   child &         \((1)\) & \(\tau^{(2)}\) & \cite{Ma7} \\
 \(3\) &      \(214\,712\) & G.\(19^\ast\)         &  \(1\) &  \(0\) &  \(1\) & W\(=\mathfrak{G}=\pi{G}\)            &   child &         \((1)\) & \(\tau^{(2)}\) & \cite{Ma11} \\
 \(3\) &  \(21\,974\,161\) & G.\(19^\ast\uparrow\) &  \(4\) &  \(1\) &  \(3\) & W\(=\mathfrak{G}=\pi^3{G}\)          & descent &   \((1,1,1,1)\) & \(\tau^{(2)}\) & \cite{Ma11} \\
\hline
 \(3\) &       \(-3\,896\) & H.\(4^\ast\)          &  \(2\) &  \(1\) &  \(1\) & N\(=\mathfrak{G}=\pi{G}\)            & bastard &       \((1,1)\) & \(\tau_\ast^{(2)}\) & \cite{Ma7} \\
 \(3\) &     ? \(-6\,896\) & H.\(4^\ast\uparrow\)  &  \(5\) &  \(2\) &  \(3\) & N\(=\mathfrak{G}=\pi^3{G}\)          & descent &     \((2,2,1)\) & \(\tau^{(3)}\) & \cite{Ma11} \\
\hline
 \(3\) &      \(534\,824\) & c.\(18\)              &  \(1\) &  \(0\) &  \(1\) & Q\(=\mathfrak{G}=\pi{G}\)            &   child &         \((1)\) & \(\tau^{(2)}\) & \cite{Ma10} \\
 \(3\) &   \(1\,030\,117\) & c.\(18\)              &  \(1\) &  \(0\) &  \(1\) & Q\(=\mathfrak{G}=\pi{G}\)            &   child &         \((1)\) & \(\tau^{(2)}\) & \cite{Ma10} \\
 \(3\) &  \(13\,714\,789\) & c.\(18\uparrow\)      &  \(1\) &  \(0\) &  \(1\) & Q\({}_8=\mathfrak{G}=\pi{G}\)        &   child &         \((1)\) & \(\tau^{(2)}\) & \cite{Ma10} \\
 \(3\) & \(241\,798\,776\) & c.\(18\uparrow^2\)    &  \(1\) &  \(0\) &  \(1\) & Q\({}_{10}=\mathfrak{G}=\pi{G}\)     &   child &         \((1)\) & \(\tau^{(2)}\) & \cite{Ma10} \\
 \(3\) &      \(540\,365\) & c.\(21\)              &  \(1\) &  \(0\) &  \(1\) & U\(=\mathfrak{G}=\pi{G}\)            &   child &         \((1)\) & \(\tau^{(2)}\) & \cite{Ma10} \\
 \(3\) &   \(1\,001\,957\) & c.\(21\uparrow\)      &  \(1\) &  \(0\) &  \(1\) & U\({}_8=\mathfrak{G}=\pi{G}\)        &   child &         \((1)\) & \(\tau^{(2)}\) & \cite{Ma10} \\
 \(3\) & \(407\,086\,012\) & c.\(21\uparrow^2\)    &  \(1\) &  \(0\) &  \(1\) & U\({}_{10}=\mathfrak{G}=\pi{G}\)     &   child &         \((1)\) & \(\tau^{(2)}\) & \cite{Ma10} \\
\hline
\end{tabular}
\end{center}
\end{table}


In Table
\ref{tbl:SimpleTreeTopologies}
and
\ref{tbl:AdvancedTreeTopologies},
we summarize all tree topologies currently known for
three-stage \(p\)-class towers of quadratic base fields \(K=\mathbb{Q}(\sqrt{d})\)
with fundamental discriminant \(d\).
We put \(n=3\) and \(m=2\), and use the abbreviations
\(\Delta\mathrm{lo}:=\Delta\mathrm{lo}(3,2)\),
\(\Delta\mathrm{cl}:=\Delta\mathrm{cl}(3,2)\),
\(\Delta\mathrm{cc}:=\Delta\mathrm{cc}(3,2)\).
Forks are labelled with Ascione's identifiers
\cite{AHL,As1},
\(\mathrm{N}:=\langle 3^6,45\rangle\),
\(\mathrm{Q}:=\langle 3^6,49\rangle\),
\(\mathrm{U}:=\langle 3^6,54\rangle\),
\(\mathrm{W}:=\langle 3^6,57\rangle\),
avoiding the long symbols in angle brackets of the SmallGroups Library
\cite{BEO1,BEO2}.
Additionally, we define two ad hoc-identifiers
\(\mathrm{Y}:=\langle 2^7,84\rangle\), resp.
\(\mathrm{Z}:=\langle 5^5,30\rangle\),
for \(p=2\), resp. \(p=5\).
Four vertices on the mainline containing \(\mathrm{Q}\), resp. \(\mathrm{U}\), are denoted by
\(\mathrm{Q}_8:=\langle 3^7,285\rangle-\#1;1\),
\(\mathrm{Q}_{10}:=\langle 3^7,285\rangle(-\#1;1)^3\),
resp.
\(\mathrm{U}_8:=\langle 3^7,303\rangle-\#1;1\),
\(\mathrm{U}_{10}:=\langle 3^7,303\rangle(-\#1;1)^3\),
using relative ANUPQ identifiers
\cite{GNO}.


A question mark in front of a discriminant \(d\) indicates that
the result is conjectural only.
By \(\mathrm{P}_7\) we denote the vertex \(\langle 3^7,64\rangle\).



\begin{figure}[ht]
\caption{Tree topology of type parent -- child}
\label{fig:TreeTopo1}


\setlength{\unitlength}{1.0cm}
\begin{picture}(6,5)(0,-4)


\put(0,0){\line(0,-1){2}}
\multiput(-0.1,0)(0,-2){2}{\line(1,0){0.2}}

\put(-0.2,-0){\makebox(0,0)[rc]{\(729\)}}
\put(0.2,-0){\makebox(0,0)[lc]{\(3^6\)}}
\put(-0.2,-2){\makebox(0,0)[rc]{\(2\,187\)}}
\put(0.2,-2){\makebox(0,0)[lc]{\(3^7\)}}

\put(0,-2){\vector(0,-1){1}}
\put(0,-3.5){\makebox(0,0)[ct]{Order \(3^n\)}}


\put(3.8,0.4){\makebox(0,0)[rc]{\(\mathfrak{G}=\pi{G}=\langle 45\rangle\)}}
\put(4.2,0.4){\makebox(0,0)[lc]{parent}}
\put(4,0){\circle*{0.2}}

\put(4,0){\line(0,-1){2}}

\put(3.9,-2.1){\framebox(0.2,0.2){}}
\put(3.8,-2.4){\makebox(0,0)[rc]{\(G=\langle 273\rangle\)}}
\put(4.2,-2.4){\makebox(0,0)[lc]{child}}


\end{picture}

\end{figure}
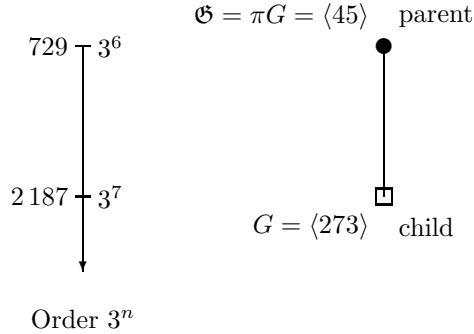

The diagram in Figure
\ref{fig:TreeTopo1}
visualizes the \textit{simple child topology}
of the mutual location between the second and third \(3\)-class group,
\(\mathfrak{G}=\mathrm{Gal}(\mathrm{F}_3^2{K}\vert K)\) and \(G=\mathrm{Gal}(\mathrm{F}_3^3{K}\vert K)\),
where \(K=\mathbb{Q}(\sqrt{d})\) is the real quadratic field with discriminant \(d=957\,013\).
The non-metabelian \(3\)-group \(G\) is a child, that is an immediate descendant of step size \(s=1\),
of the metabelian \(3\)-group \(\mathfrak{G}\).



\begin{figure}[ht]
\caption{Tree topology of type parent -- bastard}
\label{fig:TreeTopo1b}


\setlength{\unitlength}{1.0cm}
\begin{picture}(8,7)(0,-6)


\put(0,0){\line(0,-1){4}}
\multiput(-0.1,0)(0,-2){3}{\line(1,0){0.2}}

\put(-0.2,-0){\makebox(0,0)[rc]{\(729\)}}
\put(0.2,-0){\makebox(0,0)[lc]{\(3^6\)}}
\put(-0.2,-2){\makebox(0,0)[rc]{\(2\,187\)}}
\put(0.2,-2){\makebox(0,0)[lc]{\(3^7\)}}
\put(-0.2,-4){\makebox(0,0)[rc]{\(6\,561\)}}
\put(0.2,-4){\makebox(0,0)[lc]{\(3^8\)}}

\put(0,-4){\vector(0,-1){1}}
\put(0,-5.5){\makebox(0,0)[ct]{Order \(3^n\)}}


\put(3.8,0.4){\makebox(0,0)[rc]{\(\mathfrak{G}=\pi{G}=\langle 45\rangle\)}}
\put(4.2,0.4){\makebox(0,0)[lc]{parent}}
\put(4,0){\circle*{0.2}}

\put(4,0){\line(1,-2){2}}

\put(5.9,-4.1){\framebox(0.2,0.2){}}
\put(5.8,-4.4){\makebox(0,0)[rc]{\(G=\langle 45\rangle-\#2;2\)}}
\put(6.2,-4.4){\makebox(0,0)[lc]{bastard}}


\end{picture}

\end{figure}
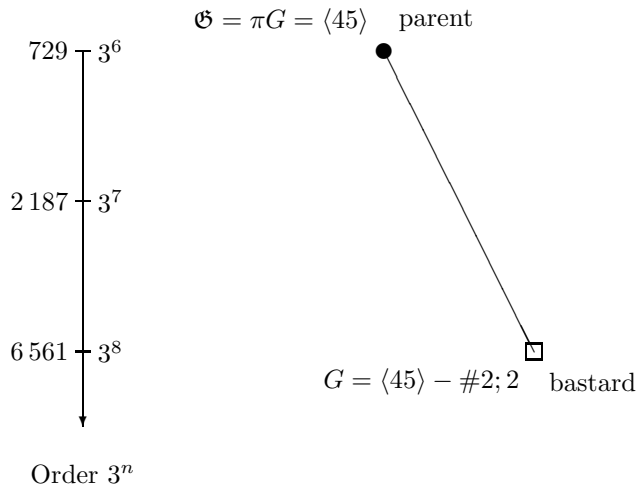

The diagram in Figure
\ref{fig:TreeTopo1b}
visualizes the \textit{simple bastard topology}
of the mutual location between the second and third \(3\)-class group,
\(\mathfrak{G}=\mathrm{Gal}(\mathrm{F}_3^2{K}\vert K)\) and \(G=\mathrm{Gal}(\mathrm{F}_3^3{K}\vert K)\),
where \(K=\mathbb{Q}(\sqrt{d})\) is the complex quadratic field with discriminant \(d=-3\,896\).
The non-metabelian \(3\)-group \(G\) is a bastard, that is an immediate descendant of step size \(s=2\),
of the metabelian \(3\)-group \(\mathfrak{G}\).



\renewcommand{\arraystretch}{1.1}

\begin{table}[ht]
\caption{Advanced tree topologies of three-stage \(p\)-class towers}
\label{tbl:AdvancedTreeTopologies}
\begin{center}
\begin{tabular}{|c|r|l|c|c|c|l|c|c|c|c|}
\hline
 \(p\) & \(d\) & TKT & \(\Delta\mathrm{lo}\) & \(\Delta\mathrm{cc}\) & \(\Delta\mathrm{cl}\) & \(\mathrm{Fork}(2,3)\) & Topology & \(G^{\prime\prime}\) & Proof & Ref. \\
\hline
 \(5\) &   \(3\,812\,377\) & a.\(2\)               &  \(1\) &  \(1\) &  \(0\) & Z\(=\pi\mathfrak{G}=\pi{G}\)         & sibling &         \((1)\) & \(\tau^{(1)}\) & \cite{Ma12} \\
\hline
 \(3\) &       \(-9\,748\) & E.\(9\)               &  \(1\) &  \(1\) &  \(0\) & U\(=\pi\mathfrak{G}=\pi{G}\)         & sibling &         \((1)\) & \(\tau^{(1)}\) & \cite{BuMa} \\
 \(3\) &     \(-297\,079\) & E.\(9\uparrow\)       &  \(2\) &  \(2\) &  \(0\) & U\(=\pi^3\mathfrak{G}=\pi^3{G}\)     &    fork &         \((2)\) & \(\tau^{(1)}\) & \cite{Ma8} \\
 \(3\) &  \(-1\,088\,808\) & E.\(9\uparrow^2\)     &  \(3\) &  \(3\) &  \(0\) & U\(=\pi^5\mathfrak{G}=\pi^5{G}\)     &    fork &         \((3)\) & \(\tau^{(1)}\) & \cite{Ma8} \\
 \(3\) & \(-11\,091\,140\) & E.\(9\uparrow^3\)     &  \(4\) &  \(4\) &  \(0\) & U\(=\pi^7\mathfrak{G}=\pi^7{G}\)     &    fork &         \((4)\) & \(\tau^{(1)}\) & \cite{Ma8} \\
 \(3\) & \(-94\,880\,548\) & E.\(9\uparrow^4\)     &  \(5\) &  \(5\) &  \(0\) & U\(=\pi^9\mathfrak{G}=\pi^9{G}\)     &    fork &         \((5)\) & \(\tau^{(1)}\) & \cite{Ma8} \\
\hline
 \(3\) &  \(14\,252\,156\) & c.\(18\uparrow\)      &  \(2\) &  \(1\) &  \(1\) & Q\(=\pi^2\mathfrak{G}=\pi^3{G}\)     &    fork &       \((2)\) & \(\tau^{(2)}\) & \cite{Ma10} \\
 \(3\) & \(174\,458\,681\) & c.\(18\uparrow^2\)    &  \(3\) &  \(2\) &  \(1\) & Q\(=\pi^4\mathfrak{G}=\pi^5{G}\)     &    fork &       \((3)\) & \(\tau^{(2)}\) & \cite{Ma10} \\
 \(3\) &  \(25\,283\,701\) & c.\(21\uparrow\)      &  \(2\) &  \(1\) &  \(1\) & U\(=\pi^2\mathfrak{G}=\pi^3{G}\)     &    fork &       \((2)\) & \(\tau^{(2)}\) & \cite{Ma10} \\
 \(3\) & \(116\,043\,324\) & c.\(21\uparrow^2\)    &  \(3\) &  \(2\) &  \(1\) & U\(=\pi^4\mathfrak{G}=\pi^5{G}\)     &    fork &       \((3)\) & \(\tau^{(2)}\) & \cite{Ma10} \\
\hline
 \(3\) & ? \(1\,535\,117\) & d.\(23\)              &  \(2\) &  \(2\) &  \(0\) & P\({}_7=\pi\mathfrak{G}=\pi{G}\)     & sibling &       \((1,1)\) & \(\tau^{(2)}\) & \\
\hline
 \(3\) &   ? \(-124\,363\) & F.\(7\ast\)           & \(11\) &  \(7\) &  \(4\) & P\({}_7=\pi\mathfrak{G}=\pi^5{G}\)   &    fork & \((3^2,2,1^3)\) & \(\tau^{(2)}\) & \\
 \(3\) &   ? \(-469\,787\) & F.\(11\)              & \(18\) & \(12\) &  \(6\) & P\({}_7=\pi^3\mathfrak{G}=\pi^9{G}\) &    fork & \((5,3^3,2^2)\) & \(\tau^{(2)}\) & \\
\hline
\end{tabular}
\end{center}
\end{table}



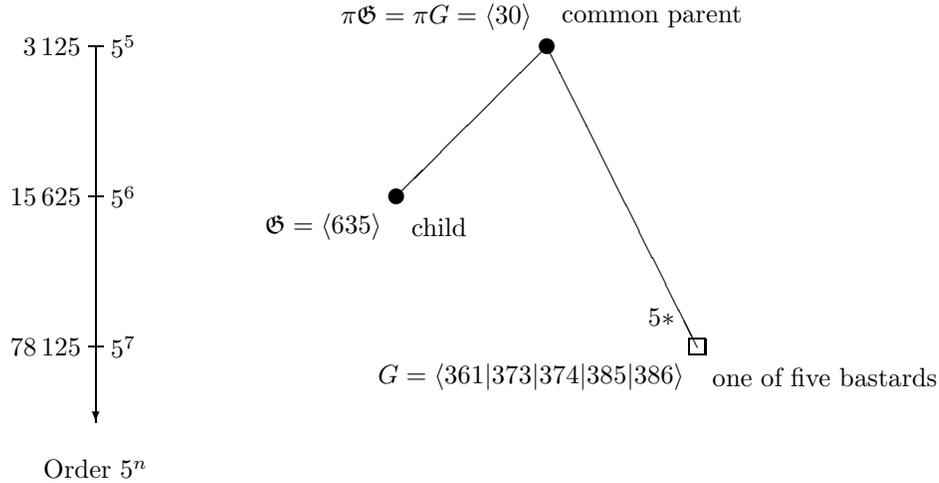
\begin{figure}[ht]
\caption{Tree topology of type siblings}
\label{fig:TreeTopo2}


\setlength{\unitlength}{1.0cm}
\begin{picture}(10,7)(0,-6)


\put(0,0){\line(0,-1){4}}
\multiput(-0.1,0)(0,-2){3}{\line(1,0){0.2}}

\put(-0.2,0){\makebox(0,0)[rc]{\(3\,125\)}}
\put(0.2,0){\makebox(0,0)[lc]{\(5^5\)}}
\put(-0.2,-2){\makebox(0,0)[rc]{\(15\,625\)}}
\put(0.2,-2){\makebox(0,0)[lc]{\(5^6\)}}
\put(-0.2,-4){\makebox(0,0)[rc]{\(78\,125\)}}
\put(0.2,-4){\makebox(0,0)[lc]{\(5^7\)}}

\put(0,-4){\vector(0,-1){1}}
\put(0,-5.5){\makebox(0,0)[ct]{Order \(5^n\)}}


\put(5.8,0.4){\makebox(0,0)[rc]{\(\pi{\mathfrak{G}}=\pi{G}=\langle 30\rangle\)}}
\put(6.2,0.4){\makebox(0,0)[lc]{common parent}}
\put(6,0){\circle*{0.2}}

\put(6,0){\line(-1,-1){2}}

\put(4,-2){\circle*{0.2}}
\put(3.8,-2.4){\makebox(0,0)[rc]{\(\mathfrak{G}=\langle 635\rangle\)}}
\put(4.2,-2.4){\makebox(0,0)[lc]{child}}

\put(6,0){\line(1,-2){2}}

\put(7.7,-3.6){\makebox(0,0)[rc]{\(5\ast\)}}
\put(7.9,-4.1){\framebox(0.2,0.2){}}
\put(7.8,-4.4){\makebox(0,0)[rc]{\(G=\langle 361\vert 373\vert 374\vert 385\vert 386\rangle\)}}
\put(8.2,-4.4){\makebox(0,0)[lc]{one of five bastards}}


\end{picture}

\end{figure}

The diagram in Figure
\ref{fig:TreeTopo2}
visualizes the \textit{advanced siblings topology}
of the mutual location between the second and third \(5\)-class group,
\(\mathfrak{G}=\mathrm{Gal}(\mathrm{F}_5^2{K}\vert K)\) and \(G=\mathrm{Gal}(\mathrm{F}_5^3{K}\vert K)\),
where \(K=\mathbb{Q}(\sqrt{d})\) is the real quadratic field with discriminant \(d=3\,812\,377\).



\section{Biquadratic base fields containing the \(p\)th roots of unity}
\label{s:BicyclicBiquadratic}

\subsection{Dirichlet fields}
\label{ss:Dirichlet}

The first examples of fields, where a violation of the Shafarevich Theorem
\ref{thm:RelationRank}
in its misprinted version
\cite[Thm. 6, \((18^\prime)\)]{Sh}
occurred, have been found by Azizi, Zekhnini and Taous
\cite{AZT},
the violation itself has been recognized by ourselves.

A bicyclic biquadratic field
\(k=\mathbb{Q}(\sqrt{-1},\sqrt{d})\) with squarefree radicand \(d>1\)
is totally complex with signature \((r_1,r_2)=(0,2)\).
Thus, the torsionfree Dirichlet unit rank of \(k\) is \(r=r_1+r_2-1=1\).
The particular fields with radicand \(d=p_1p_2q\),
where \(p_1\equiv 1\pmod{8}\), \(p_2\equiv 5\pmod{8}\) and
\(q\equiv 3\pmod{4}\) are prime numbers such that
\(\left(\frac{p_1}{p_2}\right)=-1\), \(\left(\frac{p_1}{q}\right)=-1\) and \(\left(\frac{p_2}{q}\right)=-1\),
have a \(2\)-class group \(\mathrm{Cl}_2{k}\) of type \((1^3)\)
\cite{AZT},
and a \(2\)-class tower of length \(\ell_2{k}=2\)
\cite{AZT}.
Thus, the \(2\)-class rank of \(k\) is \(\varrho=3\), and,
since \(k\) trivially contains the second roots of unity, we have the invariant \(\theta=1\),
with respect to the even prime \(p=2\).
In
\cite{Ma8},
we have identified the possible \(2\)-class tower groups \(G=\mathrm{G}_2^\infty{k}\) of \(k\) as
\(\langle 32,35\rangle\), \(\langle 64,181\rangle\), \(\langle 128,984\rangle\), etc.,
visualized in the diagram
\cite[Fig. 2, p. 752]{Ma8}.
A computation with the aid of MAGMA
\cite{MAGMA}
shows that these metabelian \(2\)-groups all have the maximal admissible relation rank \(d_2{G}=5\),
in accordance with our corrected Formula
\eqref{eqn:RelationRank}
in Theorem
\ref{thm:RelationRank},
\(d_2(G)\le d_1{G}+r+\theta=\varrho+r+\theta=3+1+1=5\),
whereas the misprinted formula
\cite[Thm. 6, \((18^\prime)\)]{Sh}
yields the contradiction \(5=d_2{G}\le d_1{G}+1=\varrho+1=3+1=4\).

In contrast, no violation could be found in our previous joint paper
\cite{AZTM},
where the possible \(2\)-class tower groups
\(G=\mathrm{G}_2^\infty{k}\in
\lbrace\langle 64,180\rangle,\langle 128,985\vert 986\rangle,\langle 256,6720\vert 6721\rangle,\ldots\rbrace\),
visualized in the diagram
\cite[Fig. 5, p. 1208]{AZTM},
all have relation rank \(d_2{G}=4\) only.



\subsection{Eisenstein fields}
\label{ss:Eisenstein}

However, another series of violations showed up in our joint paper
\cite{ATTDM}
on bicyclic biquadratic fields
\(k=\mathbb{Q}(\sqrt{-3},\sqrt{d})\) with squarefree radicand \(d>1\)
and \(3\)-class group \(\mathrm{Cl}_3{k}\) of type \((1^2)\),
which also have torsionfree Dirichlet unit rank \(r=1\),
but \(3\)-class rank \(\varrho=2\) only.
Due to the inclusion of \(\sqrt{-3}\in k\),
the fields contain the third roots of unity, and the invariant \(\theta\) takes the value \(1\),
with respect to the odd prime \(p=3\).

In \S\ 7,
Example 7.2 of
\cite{ATTDM},
we have seen that
among the \(930\) suitable values of the radicand in the range \(0<d<50\,000\),
there occur \(197\) (\(\approx 21.2\%\), e.g. \(d=469\))
with second \(3\)-class group \(\mathfrak{G}:=\mathrm{G}_3^2{k}\simeq\langle 81,9\rangle\)
and \(42\) (\(\approx 4.5\%\), e.g. \(d=7\,453\))
with \(\mathfrak{G}\simeq\langle 729,95\rangle\).
These \(3\)-groups are of coclass  \(\mathrm{cc}(\mathfrak{G})=1\) and have relation rank \(d_2{\mathfrak{G}}=4\),
as a computation by means of MAGMA
\cite{MAGMA}
shows.
Since their cover \(\mathrm{cov}(\mathfrak{G})=\lbrace\mathfrak{G}\rbrace\) is trivial,
which means that there does not exist a finite non-metabelian \(3\)-group \(H\)
of derived length \(\mathrm{dl}(H)\ge 3\)
such that \(\mathfrak{G}\) is isomorphic to the second derived quotient \(H/H^{\prime\prime}\),
the second \(3\)-class group \(\mathfrak{G}\) must coincide with the \(3\)-class tower group \(G:=\mathrm{G}_3^\infty{k}\) already.

The misprinted original version of the Theorem by Shafarevich
\cite{Sh}
(Teorema 6, p. 83, in the Russian original,
resp. Theorem 6, formula (\(18^\prime\)), p. 140, in the English translation)
enforces the relation rank \(d_2{\mathfrak{G}}=d_2{G}\le d_1{G}+1=\varrho+1=2+1=3\),
which is obviously a contradiction to our result \(d_2{\mathfrak{G}}=4\)
for more than a quarter (\(25.7\%\)) of all fields \(k\) under investigation.

Fortunately, our corrected Formula
\eqref{eqn:RelationRank}
in Theorem
\ref{thm:RelationRank},
\(d_2(G)\le d_1{G}+r+\theta=\varrho+r+\theta=2+1+1=4\),
is in accordance with the fact
that these metabelian \(3\)-groups have the maximal admissible relation rank \(d_2{\mathfrak{G}}=4\). 

Our Theorem 7.1 in
\cite{ATTDM}
is the first indication of the misprint in
\cite{Sh}
for an odd prime \(p=3\).

Since all second \(3\)-class groups \(\mathfrak{G}=\mathrm{G}_3^2{k}\)
in Theorem 8.3 and Theorem 8.7 of
\cite{ATTDM}
have relation rank \(d_2{\mathfrak{G}}=5\),
they cannot coincide with the \(3\)-tower group \(G=\mathrm{G}_3^\infty{k}\),
and the corresponding \(3\)-class field tower must have length \(\ell_3{k}\) at least \(3\)
whenever the coclass is \(\mathrm{cc}(\mathfrak{G})\ge 2\).



\section{Two-stage towers of \(p\)-class fields}
\label{s:TwoStageTowers}

The \(p\)-groups in the stem of Hall's isoclinism family \(\Phi_6\)
\cite{Hl}
are two-generated metabelian groups \(G=\langle x,y\rangle\)
of order \(\lvert G\rvert=p^5\), nilpotency class \(\mathrm{cl}(G)=3\) and coclass \(\mathrm{cc}(G)=2\).
They do not exist for \(p=2\), but for odd prime numbers \(p\ge 3\)
they uniformly arise as descendants of step size \(2\)
of the extra special group \(G_0^3(0,0)\) of order \(p^3\) and exponent \(p\)
\cite[Tbl. 1, p. 483]{Ma2}
and thus form top vertices of the coclass graph \(\mathcal{G}(p,2)\).
The reason for this behaviour is the nuclear rank \(\nu\) of \(G_0^3(0,0)\),
which is only \(\nu=1\) for \(p=2\), but \(\nu=2\) for \(p\ge 3\)
giving rise to a bifurcation from \(\mathcal{G}(p,1)\) to \(\mathcal{G}(p,2)\).

The basic properties of the stem groups in \(\Phi_6\) have been discussed in
\cite[\S\ 3.5, pp. 445--451]{Ma4},
where we pointed out that the groups for \(p=3\) are irregular in the sense of Hall,
but all groups for \(p\ge 5\) are regular, since \(\mathrm{cl}(G)=3<p\).
In
\cite[pp. 1--10]{Ma5},
we used commutator calculus for determining the transfer kernel type \(\varkappa(G)\)
in the regular case \(p\ge 5\), for the first time.
The regularity admits the simplification that all transfers uniformly map to \(p\)th powers.
A summary of the systematic results is as follows.

\begin{theorem}
\label{thm:TrfKerStem5Icl6}
(D. C. Mayer, \(08\) November \(2010\))\\

The transfer kernel type (TKT) \(\varkappa(G)\) of the
\(12\) top vertices \(G\) with abelianization \(G/G^\prime\simeq (1^2)\) of the coclass graph \(\mathcal{G}(5,2)\),
which form the stem \(\Phi_6(0)\) of \(5\)-groups in Hall's isoclinism family \(\Phi_6\),
is given by Table
\ref{tbl:TrfKerStem5Icl6}.
A partial characterization is given by the counter \(\eta\)
of fixed point transfer kernels \(\kappa(i)=\mathrm{A}\),
resp. abelianizations (transfer targets) \(\tau(i)\) of type \((1^3)\),
\[\eta=\#\lbrace 1\le i\le 6\mid\kappa(i)=\mathrm{A}\rbrace
=\#\lbrace 1\le i\le 6\mid\tau(i)=(1^3)\rbrace.\]
An asterisk after the SmallGroup identifier
\cite{BEO2}
denotes a Schur \(\sigma\)-group.

\end{theorem}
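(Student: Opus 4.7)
The plan is to exploit the explicit parametrization of the stem groups in Hall's isoclinism family \(\Phi_6\) together with regularity of \(p\)-groups of class smaller than \(p\), as outlined in \cite[\S\ 3.5, pp. 445--451]{Ma4} and \cite[pp. 1--10]{Ma5}. First I would list the \(12\) isomorphism classes of stem groups of \(\Phi_6(0)\) for \(p=5\) by pinning down, for each class, a two-generator presentation \(G=\langle x,y\rangle\) together with the structure constants that determine the commutators \(\lbrack y,x\rbrack\), \(\lbrack y,x,x\rbrack\), \(\lbrack y,x,y\rbrack\), and the fifth-power relations \(x^5\), \(y^5\) modulo \(\gamma_3{G}\). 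The SmallGroups identifiers will serve as indexing labels, and the asterisks marking Schur \(\sigma\)-groups will be assigned by verifying the balanced presentation condition \(d_2{G}=d_1{G}=2\) together with the existence of a GI-automorphism in the sense of Definition \ref{dfn:SigmaGroup}.

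Second, for each such \(G\) I would enumerate the six maximal subgroups \(H_1,\ldots,H_6\) of index \(5\) by writing
\[
H_1=\langle y,G^\prime\rangle,\qquad H_{i+1}=\langle xy^{i-1},G^\prime\rangle\quad(1\le i\le 5),
\]
since \(G/G^\prime\simeq(1^2)\) has exactly \(p+1=6\) subgroups of index \(p\). For each \(H_i\) I would determine the transfer target \(H_i/H_i^\prime\) using \(G^{\prime\prime}=1\) and the standard description of \(H_i^\prime\) as generated by commutators \(\lbrack h,G^\prime\rbrack\) for a chosen generator \(h\) of \(H_i/G^\prime\). This identifies which of the six transfer targets \(\tau(i)\) are of type \((1^3)\) as opposed to \((21)\), and yields the counter \(\eta\) on the target side.

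Third, since \(\mathrm{cl}(G)=3<5=p\) every \(G\) is regular, so the induced transfer \(\tilde{T}_{G,H_i}:G/G^\prime\to H_i/H_i^\prime\) coincides with the fifth-power map on transversal representatives modulo \(H_i^\prime\). Concretely, for \(g\in G\setminus H_i\) the transfer value is \(\tilde{T}_{G,H_i}(gG^\prime)=g^{5}H_i^\prime\), and for \(g\in H_i\setminus G^\prime\) it is \(\prod_{j=0}^{4}g^{a_j}H_i^\prime\) with \(a_j\) depending on the chosen transversal — both simplifying to fifth powers by regularity. Solving \(g^5\in H_i^\prime\) inside \(G/G^\prime\) is then a linear problem over \(\mathbb{F}_5\) in two unknowns, whose solution set is a one-dimensional subspace, encoded by a single index \(\kappa(i)\in\lbrace 0,1,\ldots,6\rbrace\) in the standard capitulation notation.

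Finally I would tabulate the resulting TKTs \(\varkappa(G)=(\kappa(1),\ldots,\kappa(6))\) and verify the claimed coincidence
\[
\eta=\#\lbrace 1\le i\le 6\mid\kappa(i)=\mathrm{A}\rbrace=\#\lbrace 1\le i\le 6\mid\tau(i)=(1^3)\rbrace
\]
by matching the two counts case by case, which reflects the standard duality between a trivial transfer kernel at the \(i\)th component and a non-cyclic first layer target at the same component. The main obstacle will be the third step, the regular commutator calculus: although regularity eliminates the nested corrections that spoil the analogous computation at \(p=3\), one still has to handle the subtle dependence of the power relations \(x^5,y^5\) on the family parameters uniformly across all \(12\) stem groups, so that the distinction between Schur \(\sigma\)-groups and the remaining classes, as well as the precise position of fixed points \(\kappa(i)=i\), is brought out cleanly enough to populate Table \ref{tbl:TrfKerStem5Icl6} without case-by-case guesswork.
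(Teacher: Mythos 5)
Your proposal follows essentially the same route as the paper's actual proof, which is delegated to \cite[Thm.\ 2.2]{Ma5} and is described in the surrounding text as exactly this: take James's explicit two-generator presentations of the twelve stem groups of \(\Phi_6(0)\), enumerate the six maximal subgroups, and use regularity (\(\mathrm{cl}(G)=3<5=p\)) to reduce all Artin transfers to fifth-power maps, so that each kernel becomes a linear computation over \(\mathbb{F}_5\); the \(\eta\)-characterization is likewise the cited Theorem 3.8 of \cite{Ma4}. One small imprecision: the kernel is not always a one-dimensional subspace --- for \(\Phi_6(1^5)\) and the \(\Phi_6(21^3)\)-groups some transfers are total, i.e.\ \(\kappa(i)=0\) with kernel all of \(G/G^\prime\) --- but your indexing \(\kappa(i)\in\lbrace 0,1,\ldots,6\rbrace\) already accommodates this.
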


\begin{proof}
This is Theorem 2.2 in
\cite{Ma5}.
The characterization by the counter \(\eta\) is due to
Theorem 3.8 and Table 3.3 of
\cite[\S\ 3.2.5, pp. 423--424]{Ma4}.
\end{proof}

In Table
\ref{tbl:TrfKerStem5Icl6},
each \(5\)-group is identified primarily with the symbol given by James
\cite{Jm},
who used Hall's isoclinism families and regular type invariants
\cite{Hl},
and Easterfield's characterization of maximal subgroups
\cite{Ef}.
The property is an invariant characterization of the TKT,
whereas the multiplet \(\varkappa\) depends on the selection of generators and
on the numeration of maximal subgroups.



\renewcommand{\arraystretch}{1.1}

\begin{table}[ht]
\caption{TKT of twelve \(5\)-groups of order \(5^5\) in the isoclinism family \(\Phi_6\)}
\label{tbl:TrfKerStem5Icl6}
\begin{center}
\begin{tabular}{|ll|cccc|}
\hline
 \multicolumn{2}{|c|}{Identifier of the \(5\)-group}  & \multicolumn{4}{|c|}{Transfer kernel type (TKT)}               \\
 James                  & SmallGroup                  & \(\eta\) & \(\varkappa\) & Cycle pattern  & Property           \\
\hline
 \(\Phi_6(2^21)_a\)     & \(\langle 3125,14\rangle\ast\) & 6 & \((123456)\) & \((1)(2)(3)(4)(5)(6)\) & identity           \\
 \(\Phi_6(2^21)_{b_1}\) & \(\langle 3125,11\rangle\ast\) & 2 & \((125364)\) & \((1)(2)(3564)\)       & \(4\)-cycle        \\
 \(\Phi_6(2^21)_{b_2}\) & \(\langle 3125,7\rangle\)   & 2 & \((126543)\) & \((1)(2)(36)(45)\)     & two \(2\)-cycles   \\
 \(\Phi_6(2^21)_{c_1}\) & \(\langle 3125,8\rangle\ast\)  & 1 & \((612435)\) & \((16532)(4)\)         & \(5\)-cycle        \\
 \(\Phi_6(2^21)_{c_2}\) & \(\langle 3125,13\rangle\ast\) & 1 & \((612435)\) & \((16532)(4)\)         & \(5\)-cycle        \\
 \(\Phi_6(2^21)_{d_0}\) & \(\langle 3125,10\rangle\)  & 0 & \((214365)\) & \((12)(34)(56)\)       & three \(2\)-cycles \\
 \(\Phi_6(2^21)_{d_1}\) & \(\langle 3125,12\rangle\ast\) & 0 & \((512643)\) & \((154632)\)           & \(6\)-cycle        \\
 \(\Phi_6(2^21)_{d_2}\) & \(\langle 3125,9\rangle\ast\)  & 0 & \((312564)\) & \((132)(456)\)         & two \(3\)-cycles   \\
\hline
 \(\Phi_6(21^3)_a\)     & \(\langle 3125,4\rangle\)   & 2 & \((022222)\) &                        &nrl. const. with fp.\\
 \(\Phi_6(21^3)_{b_1}\) & \(\langle 3125,5\rangle\)   & 1 & \((011111)\) &                        & nearly constant    \\
 \(\Phi_6(21^3)_{b_2}\) & \(\langle 3125,6\rangle\)   & 1 & \((011111)\) &                        & nearly constant    \\
 \(\Phi_6(1^5)\)        & \(\langle 3125,3\rangle\)   & 6 & \((000000)\) &                        & constant           \\
\hline
\end{tabular}
\end{center}
\end{table}



\begin{theorem}
\label{thm:TrfKerStem7Icl6}
(D. C. Mayer, \(15\) October \(2012\))\\

The transfer kernel type (TKT) \(\varkappa(G)\) of the
\(14\) top vertices \(G\) with abelianization \(G/G^\prime\simeq (1^2)\) of the coclass graph \(\mathcal{G}(7,2)\),
which form the stem \(\Phi_6(0)\) of \(7\)-groups in Hall's isoclinism family \(\Phi_6\),
is given by Table
\ref{tbl:TrfKerStem7Icl6}.
A partial characterization is given by the counter \(\eta\)
of fixed point transfer kernels \(\kappa(i)=\mathrm{A}\),
resp. abelianizations (transfer targets) \(\tau(i)\) of type \((1^3)\),
\[\eta=\#\lbrace 1\le i\le 8\mid\kappa(i)=\mathrm{A}\rbrace
=\#\lbrace 1\le i\le 8\mid\tau(i)=(1^3)\rbrace.\]
A star after the SmallGroup identifier
\cite{BEO2}
denotes a Schur \(\sigma\)-group.

\end{theorem}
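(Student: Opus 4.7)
The proof should proceed in close parallel to that of Theorem \ref{thm:TrfKerStem5Icl6}, exploiting the fact that for $p=7$ the stem groups $G$ in $\Phi_6(0)$ are still \emph{regular} in Hall's sense, since $\mathrm{cl}(G)=3<5<7=p$. My plan is to first enumerate the $14$ top vertices of $\mathcal{G}(7,2)$ with $G/G^\prime\simeq (1^2)$ using James's parametric families $\Phi_6(2^21)_\ast$, $\Phi_6(21^3)_\ast$, and $\Phi_6(1^5)$, and to fix for each of them a pair of generators $\langle x,y\rangle$ together with explicit representatives of the eight maximal subgroups $M_1,\dots,M_8$ following Easterfield's recipe (so $M_i=\langle xy^{i-1},G^\prime\rangle$ for $1\le i\le 7$ and $M_8=\langle y,G^\prime\rangle$). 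With these generators in place I would read off the commutator structure of $G^\prime$ from the defining relations of the James family, which — exactly as for $p=5$ — is controlled by a small number of parameters in $\mathbb{F}_p$.

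Next I would compute each Artin transfer $T_{G,M_i}:G\to M_i/M_i^\prime$ via regular commutator calculus. The crucial simplification carried over from the $p=5$ case is that regularity forces every transfer image to be a $p$th power of a generator of $M_i$, so that $T_{G,M_i}(g)$ reduces to evaluating $g^p$ inside $M_i/M_i^\prime$, and the kernel $\ker\tilde T_{G,M_i}$ is identified with the unique index-$p$ subgroup on which this $p$th power map is trivial. Collecting these eight kernels for each of the $14$ stem groups produces the multiplet $\varkappa(G)$, which I would then sort into the isoclinism classes and match with their cycle patterns and $\eta$-invariant.

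To complete the identification, I would cross-check the arising TKTs with the SmallGroups identifiers $\langle 7^5,n\rangle$ by recomputing $\tau(G)$ independently from Theorem 3.8 and Table 3.3 of \cite{Ma4}: the number of abelianizations of type $(1^3)$ among the eight maximal subgroups must agree with the number of fixed points of $\varkappa$, providing the desired partial characterization via $\eta$. The Schur $\sigma$-group property is then verified group by group by exhibiting a generator-inverting automorphism $\sigma\in\mathrm{Aut}(G)$ acting as $x\mapsto x^{-1}$, $y\mapsto y^{-1}$ on $G/G^\prime$ and checking the balanced presentation condition $d_2{G}=d_1{G}=2$ using MAGMA's $p$-quotient routines.

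The main obstacle will not lie in any single transfer computation, since each is entirely mechanical once the regular commutator relations are written down, but rather in the bookkeeping: for $p=7$ there are $14$ stem groups (versus $12$ for $p=5$), each with eight maximal subgroups, and the two extra families appearing at $p=7$ force a careful re-examination of which parameter specializations collapse under the appropriate base change of generators. Keeping the labelling consistent with James's normal form — and in particular sorting out which of the new $c$- and $d$-type subfamilies are Schur $\sigma$-groups — is where the most attention is needed; everything else is a direct transcription of the $p=5$ argument given in \cite{Ma5}.
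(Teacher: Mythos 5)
Your plan coincides with the paper's actual method: the paper proves this theorem only by citation to Theorem 2.3 of \cite{Ma5}, and the surrounding text explains that the computation there is exactly the regular commutator calculus you describe, exploiting \(\mathrm{cl}(G)=3<p\) so that all transfers map to \(p\)th powers, with the \(\eta\)-characterization taken from Theorem 3.8 and Table 3.3 of \cite{Ma4}. The only small caution is that your identification of \(\ker\tilde{T}_{G,M_i}\) with ``the unique index-\(p\) subgroup on which the \(p\)th power map is trivial'' presumes a kernel of order \(p\), which fails for the four stem groups \(\Phi_6(21^3)_a\), \(\Phi_6(21^3)_{b_1}\), \(\Phi_6(21^3)_{b_3}\), \(\Phi_6(1^5)\) whose TKTs contain total transfers (the zeros in Table \ref{tbl:TrfKerStem7Icl6}), so your bookkeeping must allow the kernel to be all of \(G/G^\prime\).
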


\begin{proof}
This is Theorem 2.3 in
\cite{Ma5}.
The characterization by the counter \(\eta\) is due to
Theorem 3.8 and Table 3.3 of
\cite[\S\ 3.2.5, pp. 423--424]{Ma4}
\end{proof}

In Table
\ref{tbl:TrfKerStem7Icl6},
each \(7\)-group is identified primarily with the symbol given by James
\cite{Jm},
who used Hall's isoclinism families and regular type invariants
\cite{Hl},
and Easterfield's characterization of maximal subgroups
\cite{Ef}.
The property is an invariant characterization of the TKT,
whereas the multiplet \(\varkappa\) depends on the selection of generators and
on the numeration of maximal subgroups.



\renewcommand{\arraystretch}{1.1}

\begin{table}[ht]
\caption{TKT of fourteen \(7\)-groups of order \(7^5\) in the isoclinism family \(\Phi_6\)}
\label{tbl:TrfKerStem7Icl6}
\begin{center}
\begin{tabular}{|ll|cccc|}
\hline
 \multicolumn{2}{|c|}{Identifier of the \(7\)-group}   & \multicolumn{4}{|c|}{Transfer kernel type (TKT)}                 \\
 James                  & SmallGroup                   & \(\eta\) & \(\varkappa\) & Cycle pattern           & Property    \\
\hline
 \(\Phi_6(2^21)_a\)     & \(\langle 16807,7\rangle\ast\)  & 8 & \((12345678)\) & \((1)(2)(3)(4)(5)(6)(7)(8)\)  & identity    \\
 \(\Phi_6(2^21)_{b_1}\) & \(\langle 16807,11\rangle\ast\) & 2 & \((12753864)\) & \((1)(2)(376845)\)     & \(6\)-cycle        \\
 \(\Phi_6(2^21)_{b_2}\) & \(\langle 16807,12\rangle\ast\) & 2 & \((12637485)\) & \((1)(2)(364)(578)\)   & two \(3\)-cycles   \\
 \(\Phi_6(2^21)_{b_3}\) & \(\langle 16807,13\rangle\)  & 2 & \((12876543)\) & \((1)(2)(38)(47)(56)\) & three transpos.    \\
 \(\Phi_6(2^21)_{c_1}\) & \(\langle 16807,9\rangle\ast\)  & 1 & \((61247583)\) & \((1657832)(4)\)       & \(7\)-cycle        \\
 \(\Phi_6(2^21)_{c_3}\) & \(\langle 16807,8\rangle\ast\)  & 1 & \((61247583)\) & \((1657832)(4)\)       & \(7\)-cycle        \\
 \(\Phi_6(2^21)_{d_0}\) & \(\langle 16807,10\rangle\)  & 0 & \((21573846)\) & \((12)(35)(47)(68)\)   & four transpos.     \\
 \(\Phi_6(2^21)_{d_1}\) & \(\langle 16807,15\rangle\ast\) & 0 & \((41238756)\) & \((1432)(5867)\)       & two \(4\)-cycles   \\
 \(\Phi_6(2^21)_{d_2}\) & \(\langle 16807,16\rangle\ast\) & 0 & \((81256347)\) & \((18745632)\)         & \(8\)-cycle        \\
 \(\Phi_6(2^21)_{d_3}\) & \(\langle 16807,14\rangle\ast\) & 0 & \((71283465)\) & \((17648532)\)         & \(8\)-cycle        \\
\hline
 \(\Phi_6(21^3)_a\)     & \(\langle 16807,4\rangle\)   & 2 & \((02222222)\) &                        &nrl. const. with fp.\\
 \(\Phi_6(21^3)_{b_1}\) & \(\langle 16807,6\rangle\)   & 1 & \((01111111)\) &                        & nearly constant    \\
 \(\Phi_6(21^3)_{b_3}\) & \(\langle 16807,5\rangle\)   & 1 & \((01111111)\) &                        & nearly constant    \\
 \(\Phi_6(1^5)\)        & \(\langle 16807,3\rangle\)   & 8 & \((00000000)\) &                        & constant           \\
\hline
\end{tabular}
\end{center}
\end{table}



For all isomorphism classes of \(p\)-groups
in the stem \(\Phi_6(0)\) of the isoclinism family \(\Phi_6\),
some information on descendants and on the TKT
can be provided independently of the prime \(p\ge 5\).

\begin{theorem}
\label{thm:StemIcl6}

(D. C. Mayer and M. F. Newman)

The descendant tree \(\mathcal{T}(G)\) and
the transfer kernel type \(\varkappa(G)\) of the \(p\)-groups \(G\)
in the stem of \(\Phi_6\), that is, \(p+7\) isomorphism classes of groups,
can be described in the following uniform way,
for any odd prime \(p\ge 5\),
where \(\nu\) denotes the smallest positive quadratic non-residue modulo \(p\).

\begin{enumerate}

\item
The first \(4\) groups are infinitely capable vertices \(G\) of the coclass graph \(\mathcal{G}(p,2)\)
giving rise to infinite coclass trees \(\mathcal{T}^2{G}\) of descendants.
Their TKT \(\varkappa(G)\) is nearly constant and
contains at least one total transfer, indicated by a zero, \(\varkappa_i=0\).

\begin{enumerate}
\item
\(\Phi_6(1^5)\) has constant TKT \((\overbrace{0,\ldots,0}^{p+1 \text{ times}})\), entirely consisting of total transfers.
\item
\(\Phi_6(21^3)_a\) has nearly constant TKT \((0,\overbrace{2,\ldots,2}^{p \text{ times}})\), with fixed point \(\varkappa_2=2\).
\item
\(\Phi_6(21^3)_{b_r}\) has nearly constant TKT \((0,\overbrace{1,\ldots,1}^{p \text{ times}})\), without fixed point,
for \(r\in\lbrace 1,\nu\rbrace\).
\end{enumerate}

\item
The next \(2\) groups are finitely capable vertices \(G\) of the coclass graph \(\mathcal{G}(p,2)\)
giving rise to finite trees of descendants within this graph.
However, they give rise to infinitely many descendants
spread over higher coclass graphs \(\mathcal{G}(p,r)\), \(r\ge 3\).
Their TKT \(\varkappa(G)\) is a permutation
whose cycle pattern entirely consists of transpositions.

\begin{enumerate}
\item
\(\Phi_6(2^21)_{b_{(p-1)/2}}\) has TKT \((1,2,\ldots)\) with two fixed points,
whose cycle pattern consists of \(\frac{p-1}{2}\) transpositions.
\item
\(\Phi_6(2^21)_{d_0}\) has TKT \((2,1,\ldots)\) without fixed points,
whose cycle pattern consists of \(\frac{p+1}{2}\) transpositions,
the first of them being \((1,2)\).
\end{enumerate}

\item
The last \(p+1\) groups are Schur \(\sigma\)-groups,
in particular, they are terminal vertices of the coclass graph \(\mathcal{G}(p,2)\)
without descendants.
Their TKT \(\varkappa(G)\) is a permutation
whose cycle pattern does not contain transpositions.

\begin{enumerate}
\item
\(\Phi_6(2^21)_a\) has TKT \((1,2,\ldots,p+1)\), the identity permutation
with \(p+1\) fixed points.
\item
\(\Phi_6(2^21)_{b_r}\) has TKT \((1,2,\ldots)\) with two fixed points,
whose cycle pattern consists of cycles of length \(\frac{p-1}{\mathrm{gcd}(r,p-1)}>2\),
for \(1\le r<\frac{p-1}{2}\).
\item
\(\Phi_6(2^21)_{c_r}\) has TKT \((6,1,2,4,\ldots)\) with a single fixed point
and \(\varkappa_6=5\),
for \(r\in\lbrace 1,\nu\rbrace\).
\item
\(\Phi_6(2^21)_{d_r}\) has TKT without fixed points,
for \(1\le r\le\frac{p-1}{2}\).
\end{enumerate}

\end{enumerate}

\end{theorem}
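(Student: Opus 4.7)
The plan is to exploit three structural features of the stem $\Phi_6(0)$: the James/Easterfield classification of the $p+7$ isomorphism classes by invariants depending polynomially on $p$, the regularity $\mathrm{cl}(G)=3<p$ available for all $p\ge 5$, and the $p$-group generation algorithm recalled in \S\ref{ss:pGroupAlgorithm}. Throughout, fix generators $x,y$ and a basis of $G'/\gamma_4 G$ adapted to James' parametric presentations so that the $p+1$ maximal subgroups $H_1,\ldots,H_{p+1}$ can be listed uniformly as $H_i=\langle xy^{i-1},G'\rangle$ (for $1\le i\le p$) and $H_{p+1}=\langle y,G'\rangle$.

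First I would compute the Artin transfers $T_{G,H_i}\colon G\to H_i/H_i'$ by commutator calculus. Because $G$ is regular, every induced transfer $\tilde T_{G,H_i}$ factors through the $p$-th power map on coset representatives plus an exponent-$p$ correction by a commutator, which can be read off from James' parameters $(a,b,c)$ labelling the family. This reduces the determination of $\varkappa_i=\ker\tilde T_{G,H_i}$ to a linear condition on $G/G'\cong(\mathbb{F}_p)^2$ for each $i$, and the assertions about cycle patterns in parts (2) and (3) then follow from evaluating a $2\times 2$ matrix over $\mathbb{F}_p$ whose entries depend on the isoclinism parameter $r$. The counter $\eta$ reappears as the number of indices $i$ with $\tau_i(G)=(1^3)$ by the Theorem 3.8/Table 3.3 criterion already invoked in the proofs of Theorems \ref{thm:TrfKerStem5Icl6} and \ref{thm:TrfKerStem7Icl6}.

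Next I would settle capability and descendant structure. For (1), the four groups in $\Phi_6(1^5)$ and $\Phi_6(21^3)$ have positive nuclear rank independent of $p$ (indeed, each is the root of a coclass-$2$ mainline inside $\mathcal{G}(p,2)$), which I would verify by writing down an immediate descendant of step size $1$ explicitly and invoking the monotony Theorem \ref{thm:Monotony} to rule out further capability outside $\mathcal{G}(p,2)$. For (2), the two groups $\Phi_6(2^21)_{b_{(p-1)/2}}$ and $\Phi_6(2^21)_{d_0}$ are finitely capable in $\mathcal{G}(p,2)$ but acquire immediate descendants of step size $2$ producing the bifurcation into $\mathcal{G}(p,3),\mathcal{G}(p,4),\ldots$; this is the same mechanism invoked in the introductory discussion for $G_0^3(0,0)$, now applied one level higher. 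For (3), the Schur $\sigma$-group claim amounts to checking (a) $d_2(G)=d_1(G)=2$, done by counting relations in James' presentation and appealing to the Shafarevich bound \eqref{eqn:RelationRank}, and (b) the existence of a GI-automorphism $\sigma$ sending $x\mapsto x^{-1}$, $y\mapsto y^{-1}$, which is a direct check on the defining relators since they are balanced in commutator weight.

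The main obstacle is the parameter-dependent case analysis in (3)(b)--(3)(d). The cycle structure of $\varkappa$ for $\Phi_6(2^21)_{b_r}$ depends on the multiplicative order of a specific element of $\mathbb{F}_p^\times$ constructed from $r$, which is why the claim involves $\gcd(r,p-1)$; handling the split between the Schur $\sigma$-groups $b_r$ with $1\le r<(p-1)/2$ and the exceptional non-$\sigma$-group $b_{(p-1)/2}$ requires tracking precisely when $\varkappa$ acquires a transposition, and similarly for the $c_r$- and $d_r$-cases distinguishing quadratic residues from non-residues via the parameter $\nu$. I expect that, once the transfer matrix is written out, this reduces to a clean statement about the eigenvalues of a companion matrix over $\mathbb{F}_p$, but verifying uniformity in $p\ge 5$ and correctly matching James' normal forms with the SmallGroups identifiers tabulated in Tables \ref{tbl:TrfKerStem5Icl6} and \ref{tbl:TrfKerStem7Icl6} is the most delicate bookkeeping step.
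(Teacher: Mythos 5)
Your central computational strategy --- commutator calculus on James' parametric presentations, using the regularity \(\mathrm{cl}(G)=3<p\) so that each induced transfer reduces to a \(p\)-th power map composed with a linear condition on \(G/G'\simeq(\mathbb{F}_p)^2\) --- is exactly the method the paper attributes to its source: the proof given here is only a citation of Theorem 2.1 of the preprint on two-stage towers, and the surrounding text states explicitly that the TKTs were obtained by commutator calculus in the regular case \(p\ge 5\), where all transfers uniformly map to \(p\)th powers and \(\varkappa\) depends on James' presentations. So for the transfer kernel assertions your route coincides with the paper's.

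Two of your auxiliary steps, however, would not go through as written. First, for the Schur \(\sigma\)-group claim in (3) you propose to get \(d_2(G)=d_1(G)=2\) by ``appealing to the Shafarevich bound \eqref{eqn:RelationRank}''; that bound applies to Galois groups \(G_S=\mathrm{Gal}(K_S\vert K)\) of arithmetically defined extensions and says nothing about an abstract finite \(p\)-group. The correct lower bound is the Golod--Shafarevich inequality \(d_2>d_1^2/4\), which for \(d_1=2\) gives \(d_2\ge 2\); combined with an explicit balanced two-relator pro-\(p\) presentation this pins down \(d_2=2\). Second, in (1) you propose to establish that the four groups are \emph{infinitely} capable by exhibiting one immediate descendant of step size \(1\) and then ``invoking the monotony Theorem \ref{thm:Monotony} to rule out further capability outside \(\mathcal{G}(p,2)\)''. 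Monotony constrains the Artin pattern along edges of a descendant tree; it neither produces descendants nor bounds where they live, and a single immediate descendant only shows capability, not that the group roots an infinite coclass tree. For that you need coclass theory (e.g., realizing each of the four groups as a lower-central quotient of an infinite pro-\(p\) group of coclass \(2\)) or an explicit mainline construction; similarly, the bifurcation claim in (2) requires actually computing the nuclear rank and showing it equals \(2\), rather than arguing by analogy with the bifurcation at \(G_0^3(0,0)\). The parameter-dependent bookkeeping you flag in (3)(b)--(d) is genuinely delicate, but the two points above are where the argument, as sketched, actually fails.
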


\begin{proof}
This is Theorem 2.1 in
\cite{Ma5}.
The TKT \(\varkappa\) depends on the presentations given by James
\cite{Jm}.
\end{proof}

For \(p+2\) isomorphism classes of stem groups in \(\Phi_6\),
the TKT depends on number theoretic properties of the prime \(p\ge 5\)
and we have given explicit results for \(p\in\lbrace 5,7\rbrace\) in Theorem
\ref{thm:TrfKerStem5Icl6}
and
\ref{thm:TrfKerStem7Icl6}.
For the other five, in (1)(a--c) and (3)(a), the TKT can be given uniformly for all \(p\).

Now we come to the number theoretic harvest of Theorem
\ref{thm:TrfKerStem5Icl6},
\ref{thm:TrfKerStem7Icl6},
and
\ref{thm:StemIcl6}.



\begin{theorem}
\label{thm:SchurSigma5}

Let \(K\) be an arbitrary number field
with \(5\)-class group \(\mathrm{Cl}_5{K}\) of type \((1^2)\),
whose Artin pattern \(\mathrm{AP}(K)=(\tau(K),\varkappa(K))\) is given by

\begin{enumerate}
\item
either
\(\varkappa(K)=(1,2,3,4,5,6)\) (identity) and \(\tau(K)=\left((1^3)^6\right)\)
\item
or
\(\varkappa(K)=(1,2,5,3,6,4)\) (\(4\)-cycle) and \(\tau(K)=\left((21)^4,(1^3)^2\right)\)
\item
or
\(\varkappa(K)=(6,1,2,4,3,5)\) (\(5\)-cycle) and \(\tau(K)=\left((21)^5,(1^3)\right)\)
\item
or
\(\varkappa(K)=(5,1,2,6,4,3)\) (\(6\)-cycle) and \(\tau(K)=\left((21)^6\right)\)
\item
or
\(\varkappa(K)=(3,1,2,5,6,4)\) (two \(3\)-cycles) and \(\tau(K)=\left((21)^6\right)\).
\end{enumerate}

Then \(K\) has a \(5\)-class field tower of exact length \(\ell_5{K}=2\).

\end{theorem}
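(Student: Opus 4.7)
The plan is to translate the arithmetic hypothesis into group theory via Artin reciprocity, pin down the second $5$-class group $\mathfrak{G}:=\mathrm{G}_5^2{K}$ up to isomorphism using the explicit classification of stem groups in Hall's isoclinism family $\Phi_6$, and finally exploit the \emph{Schur $\sigma$-property} to force the $5$-tower group $G:=\mathrm{G}_5^\infty{K}$ to coincide with $\mathfrak{G}$.

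First I would apply Theorem \ref{thm:ArtinPattern}, which yields $\mathrm{AP}(\mathfrak{G})=\mathrm{AP}(K)$ and in particular $\mathfrak{G}/\mathfrak{G}'\simeq\mathrm{Cl}_5{K}\simeq(1^2)$. The occurrence of a non-abelian component of logarithmic type $(21)$ in each prescribed $\tau(K)$ excludes coclass $1$ (where all first-layer abelianizations would be elementary), so $\mathfrak{G}$ lies in $\mathcal{G}(5,2)$; since no $\tau$-component has exponent $\ge 3$, $\mathfrak{G}$ must already sit at minimal depth, namely in the stem $\Phi_6(0)$ of twelve groups of order $5^5$, class $3$. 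Matching each prescribed $\varkappa(K)$, its cycle pattern, and the fixed-point counter $\eta$ against Table \ref{tbl:TrfKerStem5Icl6} gives the identifications
\[
\mathfrak{G}\simeq\langle 3125,n\rangle,\qquad n\in\{14,\,11,\,8\text{ or }13,\,12,\,9\},
\]
for the five cases (1)--(5) respectively, where case (3) admits the two isomorphism types $\Phi_6(2^21)_{c_1}$ and $\Phi_6(2^21)_{c_2}$ sharing the same restricted Artin pattern. All six candidates are asterisked in Table \ref{tbl:TrfKerStem5Icl6}, hence are Schur $\sigma$-groups with balanced presentation $d_2{\mathfrak{G}}=d_1{\mathfrak{G}}=2$ and, by Theorem \ref{thm:StemIcl6}(3), terminal vertices of $\mathcal{G}(5,2)$ with empty descendant tree.

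To conclude $G=\mathfrak{G}$ and thus $\ell_5{K}=\mathrm{dl}(\mathfrak{G})=2$, I would show that the cover is trivial, $\mathrm{cov}(\mathfrak{G})=\{\mathfrak{G}\}$. Since $K$ has $5$-class rank $\varrho=2$, the lower Shafarevich bound of \eqref{eqn:ShafarevichCover} is met with equality by $d_2{\mathfrak{G}}=2$, placing $\mathfrak{G}$ itself into the Shafarevich cover $\mathrm{cov}(\mathfrak{G},K)$. The terminal-vertex property rules out step-size-$1$ (child) descendants; any hypothetical non-metabelian finite $5$-group $H$ with $H/H''\simeq\mathfrak{G}$ would then have to arise as a ``bastard'' of step size $\ge 2$, and its existence is excluded by running the $p$-group generation algorithm of \S\ref{ss:pGroupAlgorithm} from each candidate root, using the antitonicity of $\varkappa$ and the isotonicity of $\tau$ in Theorem \ref{thm:Monotony} as break-off condition. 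Hence $G\in\mathrm{cov}(\mathfrak{G},K)\subseteq\mathrm{cov}(\mathfrak{G})=\{\mathfrak{G}\}$, which yields $G=\mathfrak{G}$.

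The principal obstacle is the last step: ``terminal in $\mathcal{G}(5,2)$'' only forbids immediate descendants, whereas elements of $\mathrm{cov}(\mathfrak{G})$ are a priori arbitrary finite $5$-groups whose second derived quotient happens to be $\mathfrak{G}$, and for an arbitrary number field $K$ the upper Shafarevich bound $d_2{G}\le 2+r+\theta$ can be large enough to admit presentations with many relations. Establishing $\mathrm{cov}(\mathfrak{G})=\{\mathfrak{G}\}$ rigorously therefore rests on the cohomological content of the balanced presentation $d_2{\mathfrak{G}}=d_1{\mathfrak{G}}=2$ together with the explicit commutator-calculus analysis of the six candidates carried out in \cite{Ma5}, which reduces the verification to a bounded finite search among step-size-$\ge 2$ extensions; that search, rather than the pattern-matching of step two, is the technical heart of the proof.
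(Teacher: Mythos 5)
Your first step---using the Artin pattern to force \(\mathfrak{G}=\mathrm{G}_5^2{K}\) into the stem \(\Phi_6(0)\) and then reading off the candidates \(\langle 3125,n\rangle\), \(n\in\lbrace 14,11,8,13,12,9\rbrace\), from Table \ref{tbl:TrfKerStem5Icl6}---is exactly the paper's first step (though your stated reason for excluding coclass \(1\), namely that coclass-\(1\) groups have only elementary first-layer abelianizations, is false: the group \(\langle 5^6,635\rangle\) of coclass \(1\) has \(\tau_1=\lbrack 21^3,(1^2)^5\rbrack\); the paper instead uses that \emph{all six} components of \(\tau(K)\) have logarithmic order exactly \(3\), which bounds \(\lvert\mathfrak{G}\rvert\)).

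The genuine gap is in your final step, and you half-diagnose it yourself. Elements of \(\mathrm{cov}(\mathfrak{G})\) are arbitrary finite \(5\)-groups \(G\) with \(G/G^{\prime\prime}\simeq\mathfrak{G}\); they are \emph{not} descendants of \(\mathfrak{G}\) in the sense of Definition \ref{dfn:DescTree} (a descendant has \(\mathfrak{G}\) as a lower-central quotient, not as a second derived quotient), so running the \(p\)-group generation algorithm from \(\mathfrak{G}\) or from any finite set of roots, with the monotonicity of Theorem \ref{thm:Monotony} as break-off condition, cannot in principle enumerate \(\mathrm{cov}(\mathfrak{G})\); no ``bounded finite search among step-size-\(\ge 2\) extensions'' closes this. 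You correctly sense that the balanced presentation \(d_2{\mathfrak{G}}=d_1{\mathfrak{G}}=2\) must be the decisive input, but you never convert that into an argument. The paper does: it invokes \cite[Lem.\ 4.10, p.\ 273]{BoEb}, which states that an epimorphism \(G\to G/G^{\prime\prime}\simeq\mathfrak{G}\) onto a balanced group \(\mathfrak{G}\) is already an isomorphism. Applied to any \(G\in\mathrm{cov}_c(\mathfrak{G})\), this gives \(G\simeq\mathfrak{G}\) at once, so \(\mathrm{cov}(\mathfrak{G})=\lbrace\mathfrak{G}\rbrace\) with no search whatsoever, and \(\mathrm{G}_5^\infty{K}\simeq\mathrm{G}_5^2{K}\), \(\ell_5{K}=2\). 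That single cohomological lemma is the missing idea; without it (or an equivalent statement that Schur \(\sigma\)-groups have trivial complete cover), your argument does not establish the conclusion.
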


\begin{proof}
According to Theorem
\ref{thm:TrfKerStem5Icl6}
and Table
\ref{tbl:TrfKerStem5Icl6},
the five given alternatives for the transfer kernel type \(\varkappa(K)\) of the number field \(K\),
which are permutations whose cycle decomposition does not contain \(2\)-cycles,
uniquely determine the Schur \(\sigma\)-groups \(\langle 5^5,n\rangle\) with identifiers \(n\in\lbrace 14,11,8,13,12,9\rbrace\),
except for the ambiguity of the \(5\)-cycle with two possibilities \(n\in\lbrace 8,13\rbrace\),
provided the second \(5\)-class group \(\mathfrak{G}=\mathrm{G}_5^2{K}\) of \(K\) belongs to the stem of \(\Phi_6\).
The latter condition is ensured by the additional assignment of the transfer target type \(\tau(K)\),
all of whose components are of logarithmic order \(\mathrm{lo}_5{\tau_i}=3\), for \(1\le i\le 6\).

It remains to show that the metabelian Schur \(\sigma\)-group \(\mathfrak{G}\)
cannot be isomorphic to the second derived quotient \(G/G^{\prime\prime}\) of a non-metabelian \(5\)-group \(G\),
that is, the cover \(\mathrm{cov}(\mathfrak{G})=\lbrace\mathfrak{G}\rbrace\) is trivial.
This is a consequence of
\cite[Lem. 4.10, p. 273]{BoEb},
which shows that an epimorphism \(G\to G/G^{\prime\prime}\simeq\mathfrak{G}\)
onto the balanced group \(\mathfrak{G}\) is an isomorphism \(G\simeq\mathfrak{G}\).
Therefore, we have \(\mathrm{G}_5^\infty{K}\simeq\mathrm{G}_5^2{K}\) and thus \(\ell_5{K}=2\).
\end{proof}



\begin{theorem}
\label{thm:SchurSigma7}

Let \(K\) be an arbitrary number field
with \(7\)-class group \(\mathrm{Cl}_7{K}\) of type \((1^2)\),
whose Artin pattern \(\mathrm{AP}(K)=(\tau(K),\varkappa(K))\) is given by

\begin{enumerate}
\item
either
\(\varkappa(K)=(1,2,3,4,5,6,7,8)\) (identity) and \(\tau(K)=\left((1^3)^8\right)\)
\item
or
\(\varkappa(K)=(1,2,7,5,3,8,6,4)\) (\(6\)-cycle) and \(\tau(K)=\left((12)^6,(1^3)^2\right)\)
\item
or
\(\varkappa(K)=(1,2,6,3,7,4,8,5)\) (two \(3\)-cycles) and \(\tau(K)=\left((12)^6,(1^3)^2\right)\)
\item
or
\(\varkappa(K)=(6,1,2,4,7,5,8,3)\) (\(7\)-cycle) and \(\tau(K)=\left((12)^7,(1^3)\right)\)
\item
or
\(\varkappa(K)=(8,1,2,5,6,3,4,7)\) (\(8\)-cycle) and \(\tau(K)=\left((12)^8\right)\)
\item
or
\(\varkappa(K)=(4,1,2,3,8,7,5,6)\) (two \(4\)-cycles) and \(\tau(K)=\left((12)^8\right)\).
\end{enumerate}

Then \(K\) has a \(7\)-class field tower of exact length \(\ell_7{K}=2\).

\end{theorem}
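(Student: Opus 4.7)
The plan is to mirror the proof of Theorem \ref{thm:SchurSigma5} verbatim at the prime \(p=7\), replacing Theorem \ref{thm:TrfKerStem5Icl6} and Table \ref{tbl:TrfKerStem5Icl6} by their \(7\)-analogues Theorem \ref{thm:TrfKerStem7Icl6} and Table \ref{tbl:TrfKerStem7Icl6}. First I would invoke Theorem \ref{thm:ArtinPattern} to transport the arithmetic Artin pattern \(\mathrm{AP}(K)=(\tau(K),\varkappa(K))\) to the group-theoretic Artin pattern \(\mathrm{AP}(\mathfrak{G})\) of the second \(7\)-class group \(\mathfrak{G}=\mathrm{G}_7^2{K}\). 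Since \(\mathrm{Cl}_7{K}\simeq(1^2)\), the abelianization \(\mathfrak{G}/\mathfrak{G}^\prime\) is bicyclic of type \((1^2)\), so \(\mathfrak{G}\) is a two-generated finite \(7\)-group whose \(8\) maximal subgroups correspond precisely to the \(8\) unramified cyclic septic extensions of \(K\).

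Next I would pin down the isomorphism class of \(\mathfrak{G}\). The six listed transfer kernel types are exactly the permutation TKTs of Table \ref{tbl:TrfKerStem7Icl6} whose cycle pattern is free of transpositions, namely the identity, the \(6\)-cycle, two \(3\)-cycles, the \(7\)-cycle, the \(8\)-cycle, and two \(4\)-cycles. By Theorem \ref{thm:TrfKerStem7Icl6} each such \(\varkappa\) singles out a Schur \(\sigma\)-group \(\langle 7^5,n\rangle\) in the stem \(\Phi_6(0)\), with \(n\in\lbrace 7,11,12,9\vert 8,16\vert 14,15\rbrace\), up to the known ambiguity in the \(7\)-cycle and \(8\)-cycle cases. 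The extra hypothesis on \(\tau(K)\) — each of the eight components has logarithmic order \(3\), so none has order \(7^4\) or larger — rules out all proper descendants of the stem groups and forces \(\mathfrak{G}\) to sit in \(\Phi_6(0)\), since by Theorem \ref{thm:Monotony} a descent increases at least one abelian-type component strictly. This argument combines the monotonicity of \(\tau\) with the explicit tabulation to identify \(\mathfrak{G}\).

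Finally I would remove the second stage of the tower. Each candidate \(\mathfrak{G}\) has balanced presentation \(d_2{\mathfrak{G}}=d_1{\mathfrak{G}}=2\), so it is a Schur \(\sigma\)-group. The key lemma \cite[Lem. 4.10, p. 273]{BoEb}, already invoked for the proof of Theorem \ref{thm:SchurSigma5}, states that any epimorphism \(G\twoheadrightarrow\mathfrak{G}\) from a finite (pro-)\(p\) group \(G\) onto a balanced \(\sigma\)-group is an isomorphism. Consequently \(\mathrm{cov}(\mathfrak{G})=\lbrace\mathfrak{G}\rbrace\), hence \(\mathrm{G}_7^\infty{K}/\mathrm{G}_7^\infty{K}^{\prime\prime}\simeq\mathfrak{G}\) forces \(\mathrm{G}_7^\infty{K}\simeq\mathfrak{G}=\mathrm{G}_7^2{K}\), so the \(7\)-class field tower of \(K\) has exact length \(\ell_7{K}=2\).

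The only genuine obstacle is the second step: verifying that the prescribed TTT \(\tau(K)\), all of whose entries are of order \(7^3\), really constrains \(\mathfrak{G}\) to the stem \(\Phi_6(0)\) rather than permitting a descendant of larger order that happens to share the same TKT. This is handled by the antitonicity of \(\varkappa\) and isotonicity of \(\tau\) along the descendant tree (Theorem \ref{thm:Monotony}): any proper descendant of a stem group in \(\Phi_6\) acquires strictly larger transfer targets in at least one component, contradicting the hypothesis \(\tau_i(K)\in\lbrace(1^3),(12)\rbrace\) for all \(i\). All remaining verifications are routine lookups in Table \ref{tbl:TrfKerStem7Icl6} and an appeal to \cite[Lem. 4.10]{BoEb}.
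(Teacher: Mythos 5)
Your proposal is correct and follows essentially the same route as the paper: identify the six transposition-free permutation TKTs with the Schur \(\sigma\)-groups \(\langle 7^5,n\rangle\) of Table \ref{tbl:TrfKerStem7Icl6}, use the hypothesis that every component of \(\tau(K)\) has logarithmic order \(3\) to force \(\mathfrak{G}=\mathrm{G}_7^2{K}\) into the stem \(\Phi_6(0)\), and conclude \(\mathrm{cov}(\mathfrak{G})=\lbrace\mathfrak{G}\rbrace\) via \cite[Lem.~4.10]{BoEb} exactly as in the proof of Theorem \ref{thm:SchurSigma5}. Your explicit appeal to the monotonicity of \(\tau\) (Theorem \ref{thm:Monotony}) to exclude proper descendants of the stem groups is a welcome elaboration of a step the paper leaves implicit, but it is the same argument in substance.
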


\begin{proof}
According to Theorem
\ref{thm:TrfKerStem7Icl6}
and Table
\ref{tbl:TrfKerStem7Icl6},
the six given alternatives for the transfer kernel type \(\varkappa(K)\) of the number field \(K\),
which are permutations whose cycle decomposition does not contain \(2\)-cycles,
uniquely determine the Schur \(\sigma\)-groups \(\langle 7^5,n\rangle\) with identifiers \(n\in\lbrace 7,11,12,9,8,15,16,14\rbrace\),
except for the ambiguity of the \(7\)-cycle with two possibilities \(n\in\lbrace 9,8\rbrace\)
and of the \(8\)-cycle with two possibilities \(n\in\lbrace 16,14\rbrace\),
provided the second \(7\)-class group \(\mathfrak{G}=\mathrm{G}_7^2{K}\) of \(K\) belongs to the stem of \(\Phi_6\).
The latter condition is ensured by the additional assignment of the transfer target type \(\tau(K)\),
all of whose components are of logarithmic order \(\mathrm{lo}_7{\tau_i}=3\), for \(1\le i\le 8\).

Similarly as in the proof of Theorem
\ref{thm:SchurSigma5},
we have \(\mathrm{G}_7^\infty{K}\simeq\mathrm{G}_7^2{K}\) and thus \(\ell_7{K}=2\).
\end{proof}

\noindent
Unfortunately, the TTT alone does not permit a decision about \(\ell_p{K}\)
with the aid of Theorem
\ref{thm:SchurSigma5}
or
\ref{thm:SchurSigma7},
in general.
The reason is that the groups
\(\langle 3125,7\rangle\) and \(\langle 3125,10\rangle\)
resp.
\(\langle 16807,10\rangle\) and \(\langle 16807,13\rangle\),
must be identified by means of their TKT.
However, an exception where the TTT is sufficient is given
in the following Corollary,
concerning \(\Phi_6(2^21)_{c_r}\) for \(r\in\lbrace 1,\nu\rbrace\)
(Theorem
\ref{thm:StemIcl6}).



\begin{corollary}
\label{cor:SchurSigma5or7}

Let \(p\ge 5\) be a prime
and let \(K\) be an arbitrary number field with
\(p\)-class group \(\mathrm{Cl}_p{K}\) of type \((1^2)\).
If the TTT of \(K\) is given by \(\tau(K)=\left((21)^p,(1^3)\right)\),
or equivalently, if \(\eta=1\),
then \(\ell_p{K}=2\).

\end{corollary}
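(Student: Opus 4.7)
The plan is to reduce to the situation already handled in Theorems \ref{thm:SchurSigma5} and \ref{thm:SchurSigma7}, replacing the explicit enumeration available for $p\in\lbrace 5,7\rbrace$ by the uniform classification of Theorem \ref{thm:StemIcl6}. By Theorem \ref{thm:ArtinPattern} we have $\mathrm{AP}(\mathfrak{G})=\mathrm{AP}(K)$ for the second $p$-class group $\mathfrak{G}=\mathrm{G}_p^2 K$, so the hypothesis $\tau(K)=\left((21)^p,(1^3)\right)$ translates into a TTT condition on $\mathfrak{G}$.

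First I would argue that $\mathfrak{G}$ belongs to the stem $\Phi_6(0)$ of the isoclinism family $\Phi_6$. Since $\mathrm{Cl}_p K\simeq (1^2)$, the group $\mathfrak{G}$ is a two-generator metabelian $p$-group with $p+1$ maximal subgroups, and all $p+1$ components of $\tau_1(\mathfrak{G})$ are abelian $p$-groups of logarithmic order $3$. The monotonicity relation $\tau(G)\ge\tau(\pi G)$ of Theorem \ref{thm:Monotony} implies that any proper descendant of a stem group of $\Phi_6$ inside its coclass tree has at least one TTT component of logarithmic order strictly larger than $3$; hence $\mathfrak{G}$ itself must already sit at a top vertex of $\mathcal{G}(p,2)$ (or in the corresponding part of $\mathcal{G}(p,1)$, which is excluded by the presence of a component $(21)$ of rank $2$). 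This pins $\mathfrak{G}$ down to the $p+7$ isomorphism classes enumerated in Theorem \ref{thm:StemIcl6}.

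Next, the hypothesis $\eta=1$ together with the characterization of $\eta$ via TTT components of type $(1^3)$ (Theorem 3.8 and Table 3.3 of \cite{Ma4}, also used in the proofs of Theorems \ref{thm:TrfKerStem5Icl6} and \ref{thm:TrfKerStem7Icl6}) singles out exactly the stem groups with a unique fixed-point transfer kernel. Running through the list in Theorem \ref{thm:StemIcl6}, the value $\eta=1$ occurs only for the two groups $\Phi_6(2^21)_{c_r}$ with $r\in\lbrace 1,\nu\rbrace$, both of which are Schur $\sigma$-groups by part (3)(c) of that theorem. Therefore $\mathfrak{G}\in\lbrace \Phi_6(2^21)_{c_1},\Phi_6(2^21)_{c_\nu}\rbrace$ is a balanced Schur $\sigma$-group.

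Finally, exactly as in the last paragraphs of the proofs of Theorems \ref{thm:SchurSigma5} and \ref{thm:SchurSigma7}, the balanced presentation $d_2(\mathfrak{G})=d_1(\mathfrak{G})$ combined with \cite[Lem. 4.10, p. 273]{BoEb} forces every epimorphism $G\twoheadrightarrow G/G^{\prime\prime}\simeq\mathfrak{G}$ from a finite $p$-group $G$ to be an isomorphism, so $\mathrm{cov}(\mathfrak{G})=\lbrace\mathfrak{G}\rbrace$. Consequently $\mathrm{G}_p^\infty K\simeq\mathrm{G}_p^2 K$, which is equivalent to $\ell_p K=2$. The main subtlety, and the only step requiring care beyond quoting earlier results, is the exclusion of descendants in Step 1; this is where monotonicity of $\tau$ under the parent operator does the essential work by ruling out all vertices below the stem.
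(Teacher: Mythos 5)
Your overall strategy is exactly the one the paper intends: translate the hypothesis to the second $p$-class group $\mathfrak{G}$ via Theorem \ref{thm:ArtinPattern}, use the condition that all $p+1$ components of $\tau_1$ have logarithmic order $3$ to place $\mathfrak{G}$ in the stem $\Phi_6(0)$, identify $\mathfrak{G}$ as $\Phi_6(2^21)_{c_r}$, and conclude via the Schur $\sigma$-property and \cite[Lem.~4.10]{BoEb} that $\mathrm{cov}(\mathfrak{G})=\lbrace\mathfrak{G}\rbrace$, hence $\ell_p{K}=2$. The last step is literally the closing argument of Theorems \ref{thm:SchurSigma5} and \ref{thm:SchurSigma7}, and is fine.

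However, your identification step contains a genuine gap. You assert that, running through the stem of $\Phi_6$, the value $\eta=1$ occurs \emph{only} for the two Schur $\sigma$-groups $\Phi_6(2^21)_{c_r}$, $r\in\lbrace 1,\nu\rbrace$. This is contradicted by the paper's own data: Tables \ref{tbl:TrfKerStem5Icl6} and \ref{tbl:TrfKerStem7Icl6} list $\eta=1$ also for $\Phi_6(21^3)_{b_1}$ and $\Phi_6(21^3)_{b_2}$ (resp.\ $b_3$), i.e.\ $\langle 3125,5\rangle$, $\langle 3125,6\rangle$ and $\langle 16807,5\rangle$, $\langle 16807,6\rangle$. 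These groups have nearly constant TKT $(0,1,\ldots,1)$ with a \emph{total} transfer kernel (Theorem \ref{thm:StemIcl6}(1)(c)); the total kernel contributes to the counter $\eta$ just as a fixed point does. They are \emph{not} Schur $\sigma$-groups but infinitely capable roots of coclass trees, so for them the balanced-presentation argument via \cite{BoEb} is unavailable and the conclusion $\ell_p{K}=2$ does not follow. To close the gap you must rule these groups out, and the only available handle is the \emph{full} TTT hypothesis $\tau(K)\sim\left((21)^p,(1^3)\right)$ rather than the bare count $\eta=1$: one has to verify that the TTT of $\Phi_6(21^3)_{b_r}$ (in particular the component attached to the maximal subgroup with total transfer kernel) is not of this shape, or else exclude a total kernel by some other means. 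As written, your proof silently conflates ``$\eta=1$'' with ``unique fixed point and no total kernel,'' which is precisely where the non-Schur candidates slip through. A secondary, smaller point: in your Step 1 the monotonicity of Theorem \ref{thm:Monotony} only gives $\tau(G)\ge\tau(\pi{G})$, not strict growth of some component along an edge, so the claim that every proper descendant of a stem vertex has a component of logarithmic order $>3$ needs an extra (true, but unproven here) argument; the paper itself only asserts, in the proofs of Theorems \ref{thm:SchurSigma5} and \ref{thm:SchurSigma7}, that the order-$p^3$ condition on all components forces membership in the stem.
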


\begin{example}
\label{exm:StemIcl6}

We succeeded in realizing all the metabelian Schur \(\sigma\)-groups in Theorem
\ref{thm:SchurSigma5}
and
\ref{thm:SchurSigma7},
with the single exception of \(\langle 16807,7\rangle\),
by second \(p\)-class groups \(\mathfrak{G}=\mathrm{G}_p^2{K}\)
of imaginary quadratic fields \(K=\mathbb{Q}(\sqrt{d})\)
with \(p\)-class tower length \(\ell_p{K}=2\),
whose absolute discriminants \(\lvert d\rvert\) are given in
tree diagrams of the top region of the coclass graph \(\mathcal{G}(5,2)\) in Figure 14 and
of \(\mathcal{G}(7,2)\) in Figure 15 of the Wikipedia article on the Artin transfer (group theory)\\
\verb+(https://en.wikipedia.org/wiki/Artin_transfer_(group_theory))+.

\end{example}

\begin{remark}
\label{rmk:StemIcl6}
We did not touch upon \(3\)-groups in the stem of \(\Phi_6\).
We only remarked that they are irregular in the sense of Hall,
which causes anomalies in the analogue of Theorem
\ref{thm:StemIcl6}
for \(p=3\):
among the \(3\)-groups in \(\Phi_6(0)\), there are,
firstly, only \(3\) instead of \(4\) infinitely capable vertices of \(\mathcal{G}(3,2)\),
namely \(\langle 243,n\rangle\) with \(n\in\lbrace 3,6,8\rbrace\),
and secondly, only \(2\) instead of \(4\) terminal Schur \(\sigma\)-groups,
namely \(\langle 243,n\rangle\) with \(n\in\lbrace 5,7\rbrace\).
Similarly as in Theorem
\ref{thm:StemIcl6},
there are \(2\) finitely capable vertices of \(\mathcal{G}(3,2)\),
namely \(\langle 243,n\rangle\) with \(n\in\lbrace 4,9\rbrace\).
An analogue of Theorem
\ref{thm:SchurSigma5}
and
\ref{thm:SchurSigma7}
for the Schur \(\sigma\)-groups \(\langle 243,n\rangle\) with \(n\in\lbrace 5,7\rbrace\)
has been proved in
\cite[Thm. 1.5, p. 407]{Ma4}.
It confirms results by Scholz and Taussky
\cite{SoTa}
with a short and elegant argumentation.
\end{remark}



\section{Infinite \(3\)-class towers}
\label{s:Infinite3Towers}

In the final section \S\ 7 of
\cite{Ma7},
we proved that the second \(3\)-class groups \(\mathfrak{G}=\mathrm{G}_3^2{K}\)
of the \(14\) complex quadratic fields \(K=\mathbb{Q}(\sqrt{d})\)
with fundamental discriminants \(-10^7<d<0\)
and \(3\)-class group \(\mathrm{Cl}_3{K}\) of type \((1^3)\)
are pairwise non-isomorphic
\cite[Thm. 7.1, p. 307]{Ma7}.
All these fields have an infinite \(3\)-class tower with \(\ell_3{K}=\infty\),
according to Koch and Venkov
\cite{KoVe}.
For the proof of this theorem in 
\cite[\S\ 7.3, p. 311]{Ma7},
the IPADs of the \(14\) fields were insufficient,
since three critical fields with discriminants
\[d\in\lbrace -4\,447\,704, -5\,067\,967, -8\,992\,363\rbrace\]
share the common accumulated (unordered) IPAD
\[\tau^{(1)}{K}=\lbrack\tau_0{K};\tau_1{K}\rbrack=\lbrack 1^3;(32^21;(21^4)^5,(2^21^2)^7)\rbrack.\]
To complete the proof
we had to use information on the occupation numbers of the accumulated (unordered) IPODs,\\
\(\varkappa_1{K}=\lbrack 1,2,6,(8)^6,9,(10)^2,13\rbrack\) with maximal occupation number \(6\) for \(d=-4\,447\,704\),\\
\(\varkappa_1{K}=\lbrack 1,2,(3)^2,(4)^2,6,(7)^2,8,(9)^2,12\rbrack\) with maximal occupation number \(2\) for \(d=-5\,067\,967\),\\
\(\varkappa_1{K}=\lbrack (2)^2,5,6,7,(9)^2,(10)^3,(12)^3\rbrack\) with maximal occupation number \(3\) for \(d=-8\,992\,363\).

In
\cite{Ma11},
we succeeded in computing the \textit{second layer} of the transfer target type, \(\tau_2{K}\),
for the critical fields
by determining the structure of the \(3\)-class groups \(\mathrm{Cl}_3{M}\)
of the \(13\) unramified bicyclic bicubic extensions \(M\vert K\) with relative degree \(\lbrack M:K\rbrack=3^2\)
and absolute degree \(18\) with the aid of the computational algebra system MAGMA
\cite{MAGMA}.
In accumulated (unordered) form, the second layer of the TTTs is given by\\
\(\tau_2{K}=\lbrack 32^51^2;4321^5;2^51^3,(3^221^5)^2;2^41^4,32^21^5;(2^21^7)^3,(2^31^5)^3\rbrack\) for \(d=-4\,447\,704\),\\
\(\tau_2{K}=\lbrack 3^22^21^4;(3^221^5)^3;32^21^5;(2^31^5)^8\rbrack\) for \(d=-5\,067\,967\), and\\
\(\tau_2{K}=\lbrack 32^21^6,(3^221^5)^3;2^41^4,32^21^5;2^21^7,(2^31^5)^6\rbrack\) for \(d=-8\,992\,363\).

These results admit incredibly powerful conclusions,
which bring us closer to the ultimate goal of determining the precise isomorphism type of \(\mathfrak{G}=\mathrm{G}_3^2{K}\).
Firstly, they clearly show that the second \(3\)-class groups of the critical fields are pairwise non-isomorphic,
without using the IPODs.
Secondly, the component with the biggest order establishes an impressively sharpened estimate
for the order of \(\mathfrak{G}=\mathrm{G}_3^2{K}\) from below.

\begin{theorem}
\label{thm:StrongEstimates}

(Fine estimates, D. C. Mayer, March \(2015\))

\noindent
None among the maximal subgroups of the second \(3\)-class group \(\mathfrak{G}=\mathrm{G}_3^2{K}\)
for the critical complex quadratic fields \(K=\mathbb{Q}(\sqrt{d})\) can be abelian.\\
The logarithmic order of \(\mathfrak{G}\) is bounded from below by\\
\(\mathrm{lo}_3{\mathfrak{G}}\ge 17\) for \(d=-4\,447\,704\),\\
\(\mathrm{lo}_3{\mathfrak{G}}\ge 16\) for \(d=-5\,067\,967\),\\
\(\mathrm{lo}_3{\mathfrak{G}}\ge 15\) for \(d=-8\,992\,363\).

\end{theorem}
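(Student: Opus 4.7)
The plan is to leverage the recently-computed second layer $\tau_2 K$ of the multi-layered TTT together with the Artin pattern correspondence $\mathrm{AP}(\mathfrak{G})=\mathrm{AP}(K)$ furnished by Theorem~\ref{thm:ArtinPattern}. Since $\mathrm{Cl}_3 K\simeq(1^3)$, the abelianization $\mathfrak{G}/\mathfrak{G}'$ is elementary abelian of rank three, so the lattice of subgroups sitting between $\mathfrak{G}'$ and $\mathfrak{G}$ is the subspace lattice of $\mathbb{F}_3^3$. In particular $\mathrm{Lyr}_2\mathfrak{G}$ consists of exactly $13$ subgroups $U$ of index $9$, and each maximal subgroup $H\in\mathrm{Lyr}_1\mathfrak{G}$ automatically contains $\mathfrak{G}'$ and houses precisely $4$ of these $U$'s as its own maximal subgroups. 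I would set up this incidence picture first, and then use it for both claims.

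For the numerical lower bounds on $\mathrm{lo}_3\mathfrak{G}$, I would read off the maximum logarithmic order appearing among the entries of $\tau_2 K$: this is $15$ (realised by $32^51^2$) for $d=-4\,447\,704$, $14$ (realised by $3^22^21^4$) for $d=-5\,067\,967$, and $13$ (realised by $32^21^6$ or $3^221^5$) for $d=-8\,992\,363$. By Theorem~\ref{thm:ArtinPattern} each such entry equals the logarithmic order of $U/U'$ for some $U\in\mathrm{Lyr}_2\mathfrak{G}$, and the chain
\[
\mathrm{lo}_3\mathfrak{G}=\mathrm{lo}_3 U+2\ge\mathrm{lo}_3(U/U')+2
\]
delivers the claimed bounds $17$, $16$, $15$ respectively.

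To exclude abelian maximal subgroups I would argue by contradiction. If some $H\in\mathrm{Lyr}_1\mathfrak{G}$ were abelian, each of its four index-$3$ subgroups $U$ containing $\mathfrak{G}'$ would inherit abelianness, forcing the identities $|U/U'|=|U|=|H|/3=|H/H'|/3$. These four $U$'s occur among the $13$ entries of $\tau_2 K$, and I would check that in each of the three critical cases the minimum logarithmic order of an entry of $\tau_2 K$ equals $11$ (realised for instance by $2^21^7$ or $2^31^5$), whereas the common $\tau_1 K=(32^21;(21^4)^5,(2^21^2)^7)$ has maximum logarithmic order $8$. Combining these with Theorem~\ref{thm:ArtinPattern} yields $11\le\mathrm{lo}_3(U/U')=\mathrm{lo}_3(H/H')-1\le 7$, the desired contradiction.

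The main obstacle has already been surmounted on the computational side: producing $\tau_2 K$ requires constructing the $13$ unramified bicyclic bicubic extensions $M\vert K$ of absolute degree $18$ and determining their $3$-class groups with MAGMA, which is the nontrivial input. Once these data are recorded, the group-theoretic deductions sketched above are uniform and short, and crucially they do not depend on $\mathfrak{G}$ being metabelian: only the subgroup lattice of $\mathfrak{G}$ above $\mathfrak{G}'$, which is forced by $\mathrm{Cl}_3 K\simeq(1^3)$, enters the reasoning.
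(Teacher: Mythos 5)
Your argument is correct and takes the route the paper indicates: the paper's own proof is only a citation of \cite{Ma11}, but the surrounding text states explicitly that \lq\lq the component with the biggest order\rq\rq\ of \(\tau_2{K}\) sharpens the estimate from below, which is exactly your mechanism \(\mathrm{lo}_3{\mathfrak{G}}=\mathrm{lo}_3{U}+2\ge\mathrm{lo}_3(U/U^\prime)+2\) applied to the maximal entries (of logarithmic orders \(15\), \(14\), \(13\)), and your arithmetic checks out. For the exclusion of abelian maximal subgroups your detour through the four subgroups \(U\le H\) is valid, though it can be shortened: an abelian \(H\) would force \(\mathrm{lo}_3{\mathfrak{G}}=\mathrm{lo}_3(H/H^\prime)+1\le 8+1=9\), already contradicting the lower bounds \(\ge 15\) you have just established.
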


\begin{proof}
This is Theorem 6.2 in
\cite{Ma11}.
\end{proof}

\begin{example}
More recently, it came to our knowledge that Leshin
\cite{Le}
has proved the infinitude \(\ell_3{N}=\infty\) of the \(3\)-class tower of
the sextic \(S_3\)-field \(N=\mathbb{Q}(\sqrt{-3},\root 3\of{D})\) with radicand
\(D=79\cdot 97=7\,663\equiv 4\pmod{9}\).
This is the normal closure of the pure cubic field \(L=\mathbb{Q}(\root 3\of{D})\)
with three ramified primes \(3,79,97\),
the last two of them congruent to \(1\) modulo \(3\) and thus split in \(\mathbb{Q}(\sqrt{-3})\).

However, the \(3\)-class group \(\mathrm{Cl}_3{N}\) is of type \((1^5)\)
with \(3\)-class rank \(\varrho=5\) and thus gives rise to
\(121\) unramified cyclic cubic extensions.
Thus, although we have seen that the search for the second \(3\)-class group \(\mathrm{G}_3^2{K}\)
of \(K=\mathbb{Q}(\sqrt{-4\,447\,704})\) with \(\varrho=3\) is very tough already, it still seems to be more promising
than the corresponding search for \(\mathrm{G}_3^2{N}\).

Finally, we remark that the pure cubic field \(L=\mathbb{Q}(\root 3\of{7\,663})\) is of the type
with a relative principal factorization in \(N\vert\mathbb{Q}(\sqrt{-3})\)
in the sense of Barrucand and Cohn,
since the \(3\)-class group \(\mathrm{Cl}_3{L}\) is of type \((1^3)\) and the class number formula
\(3^5 = h_3{N} = \frac{u}{3}\cdot h_3{L}^2 = \frac{u}{3}\cdot(3^3)^2 = u\cdot 3^5\)
yields the index \(u=1\) of the subfield units in \(N\).
\end{example}



\section{Acknowledgements}
\label{s:Acknowledgements}

The author gratefully acknowledges that his research is supported financially by the
Austrian Science Fund (FWF): P 26008-N25.

A succinct version of this article
will be presented as an invited lecture at the
First International Colloquium of Algebra, Number Theory, Cryptography and Information Security,
in Taza, Morocco, 11--12 November 2016.




\end{document}